\newtheorem{theorem}{Theorem}[section]
\newtheorem{corollary}[theorem]{Corollary}
\newtheorem{lemma}[theorem]{Lemma}
\newtheorem{proposition}[theorem]{Proposition}
\newtheorem{example}[theorem]{Example}
\begin{document}

\title{\bf Different central parts of trees and their pairwise distances}
\author{Dinesh Pandey\footnote{Supported by UGC Fellowship scheme (Sr. No. 2061641145), Government of India} \and Kamal Lochan Patra}
\date{}
\maketitle

\begin{abstract}
We determine the tree which maximizes the distance between characteristic set and subtree core over all trees on $n$ vertices. The asymptotic nature of this distance is  also discussed. The problem of extremizing the distance between different central parts of trees on $n$ vertices with fixed diameter is studied.\\

\noindent {\bf Key words:} Center; Centroid; Characteristic set; Subtree core; Tree\\

\noindent {\bf AMS subject classification.} 05C05; 05C12; 05C50

\end{abstract}

\section{Introduction} 

Throughout this paper, graphs are simple, finite and undirected. Let $G$ be a graph with vertex set $V(G)$ and edge set $E(G)$. A tree is a connected acyclic graph. A pendant vertex in a tree is a vertex having degree $1$. For a tree $T$ with $u,v \in V(T),$ the distance $d_T(u,v)$ or simply $d(u,v)$, is the number of edges in the path joining $u$ and $v$. We denote the distance between two subsets $U$ and $V$ of $V(T)$ by $d_T(U,V)$ or simply by $d(U,V)$  and define it as $d(U,V)= \min\{d(u,v): u\in U, v\in V\}$. For a vertex $v \in V(T)$, $e(v)=\max\{d(v,u)| u \in V(T)\}$ is called the {\it eccentricity} of $v$ in $T.$ A vertex of minimum eccentricity is called a central vertex of $T$ and the set of all central vertices is called the {\it center} of $T.$ We denote the center of a tree by $C(T)$. 

For $v\in V(T)$, a branch at $v$ is a maximal subtree of $T$ containing $v$ as a pendant vertex. The {\it weight} of $v$ is the maximal number of edges in any branch of $T$ at $v.$ A vertex of minimal weight is called a centroid vertex of $T$ and the set of all centroid vertices is called the {\it centroid} of $T.$ We denote the centroid of $T$ by $C_d(T)$. The following result is due to Jordan. 
\begin{proposition}\label{prop:cnt}(\cite{H},Theorem 4.2,Theorem 4.3)
\begin{enumerate}
\item	The center of a tree  consists of either a single vertex or two adjacent vertices.
\item The centroid of a tree consists of either a single vertex or two adjacent vertices.
\end{enumerate}
\end{proposition}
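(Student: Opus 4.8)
The plan for part~(1) is induction on $n=|V(T)|$. The base cases $n\le 2$ are immediate, since $K_1$ has center a single vertex and $K_2$ has center its two (adjacent) vertices. For $n\ge 3$, let $T'$ be the graph obtained from $T$ by deleting all pendant vertices; as the only connected graph all of whose vertices are pendant is $K_2$, the graph $T'$ is a tree on at least one and fewer than $n$ vertices. The crux is the identity $e_{T'}(v)=e_T(v)-1$ for every $v\in V(T')$. To establish it: first, any path in $T$ between two non-pendant vertices uses no pendant vertex as an internal vertex, so distances among the vertices of $T'$ agree in $T$ and in $T'$; second, a vertex of $T$ farthest from $v$ may be taken to be pendant, and for $n\ge 3$ its neighbour survives in $T'$, which gives $e_{T'}(v)\ge e_T(v)-1$; third, every vertex of $T'$ has a neighbour off its geodesic to $v$, which gives $e_{T'}(v)\le e_T(v)-1$. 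Since, moreover, for $n\ge 3$ a pendant vertex of $T$ has strictly larger eccentricity than its neighbour, every central vertex of $T$ lies in $V(T')$; combined with the identity this gives $C(T)=C(T')$, and the induction hypothesis completes the proof.

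For part~(2), given a vertex $v$ and a neighbour $w$, let $n_w(v)$ denote the order of the component of $T-v$ containing $w$; then the weight of $v$ equals $\max_{w\sim v} n_w(v)$, and $n_v(w)=n-n_w(v)$ whenever $v\sim w$. The key step is a descent lemma: if some neighbour $w$ of $v$ satisfies $n_w(v)>n/2$, then the weight of $w$ is strictly less than that of $v$, because the component of $T-w$ containing $v$ has $n-n_w(v)<n/2<n_w(v)$ vertices, while every other component of $T-w$ lies inside the $w$-component of $T-v$ and so has at most $n_w(v)-1$ vertices. Note also that $v$ has at most one such ``heavy'' neighbour, since two such components of $T-v$ would be disjoint with total order exceeding $n$. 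Now pick a centroid vertex $v$ (it exists as $T$ is finite); the descent lemma forces every component of $T-v$ to have at most $\lfloor n/2\rfloor$ vertices. If every such component has fewer than $n/2$ vertices, then for any other vertex $u$ the component of $T-u$ containing $v$ has more than $n/2$ vertices, so the weight of $u$ exceeds that of $v$, and $v$ is the unique centroid vertex. If instead some component of $T-v$, say the one containing a neighbour $w$ of $v$, has exactly $n/2$ vertices (so $n$ is even), then $w$ also has weight $n/2$ and hence is a centroid vertex, while the same path argument shows every vertex other than $v$ and $w$ has weight at least $n/2+1$; so the centroid is the edge $\{v,w\}$.

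The routine-looking but genuinely load-bearing point in part~(1) is the eccentricity identity $e_{T'}(v)=e_T(v)-1$, and in particular the corner cases that force $n\ge 3$: leaf-deletion must leave the tree nonempty, and a vertex farthest from $v$ must have its neighbour survive in $T'$. In part~(2) the analogous delicate point is the boundary case $n_w(v)=n/2$: one must verify that it produces exactly one further centroid vertex, adjacent to $v$, which is precisely what yields the alternative of two adjacent vertices rather than a larger set.
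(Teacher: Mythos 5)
The paper does not prove this statement at all---it is quoted from Harary (Theorems 4.2 and 4.3)---so there is no in-paper proof to compare against; your write-up supplies the standard classical arguments, and both parts are correct. Part (1) is exactly Harary's leaf-deletion induction, with the load-bearing identity $e_{T'}(v)=e_T(v)-1$ and the observation that pendant vertices are never central justified correctly, and part (2) is the usual Jordan-type descent on component sizes, which is legitimate since the weight of $v$ (maximal number of edges in a branch at $v$) equals $\max_{w\sim v} n_w(v)$ in your notation; your handling of the boundary case $n_w(v)=n/2$, showing every vertex other than $v$ and $w$ has weight at least $n/2+1$, correctly yields the alternative of two adjacent centroid vertices.
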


 For $v \in V(T),$ the distance of $v$ in $T$, denoted by $g(v),$ is defined as $g(v)=\sum_{u \in V(T)}d(u,v)$. A vertex of minimum distance is called a median vertex of $T$ and the set of all median vertices is called the {\it median} of $T.$  In \cite{Z}, Zelinka proved the following facts regarding median.
\begin{proposition}\label{prop:med1}(\cite{Z},Theorem 2 and 3)
The median of a tree consists of either a single vertex or two adjacent vertices and it coincides with the centroid.
\end{proposition}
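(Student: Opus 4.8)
The plan is to show that the median of $T$ coincides with its centroid; the structural part of the statement (one vertex or two adjacent vertices) then follows from Proposition~\ref{prop:cnt}(2), although it will also emerge directly from the argument.

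First I would record the elementary \emph{edge formula}: if $uv\in E(T)$ and $T-uv$ has components $T_u\ni u$ and $T_v\ni v$, then $g(v)-g(u)=|V(T_u)|-|V(T_v)|$, obtained by partitioning the sum defining $g$ according to the component in which each vertex lies. From this I would deduce a \emph{convexity} property: for a path $x_0,x_1,\dots,x_k$ in $T$, if $S_i$ is the vertex set of the component of $T-x_ix_{i+1}$ containing $x_{i+1}$, then $S_0\supsetneq S_1\supsetneq\cdots\supsetneq S_{k-1}$, so the increments $g(x_{i+1})-g(x_i)=n-2|S_i|$ strictly increase with $i$. Consequently a local minimum of $g$ is a global minimum, any two median vertices are adjacent (so the median is a single vertex or two adjacent vertices), and $v$ is a median vertex precisely when $g(v)\le g(v')$ for every neighbour $v'$, equivalently when every branch at $v$ has at most $n/2$ edges, that is, $w(v)\le n/2$.

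Next I would prove the companion characterisation for the centroid: $v$ is a centroid vertex if and only if $w(v)\le n/2$. If $w(v)>n/2$, then $v$ has a unique branch, toward some neighbour $v'$, with more than $n/2$ edges (two such branches are edge-disjoint and would together contain more edges than $T$ has); bounding the sizes of the branches at $v'$ then gives $w(v')<w(v)$, so $v$ is not a centroid vertex. Conversely, suppose $w(v)\le n/2$ and let $u$ be any centroid vertex. By the first part of this paragraph $w(u)\le n/2$ as well, so by the previous paragraph both $u$ and $v$ are median vertices; since the median has at most two (adjacent) vertices, either $u=v$ or the median is $\{u,v\}$ with $uv\in E(T)$. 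In the latter case $g(u)=g(v)$ forces both components of $T-uv$ to have exactly $n/2$ vertices, and a short count then gives $w(u)=w(v)=n/2$. In either case $w(v)=\min_{x\in V(T)}w(x)$, so $v$ is a centroid vertex.

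Putting the two characterisations together yields $\text{median}(T)=\{v\in V(T):w(v)\le n/2\}=\text{centroid}(T)$, which proves the proposition. The step I expect to be the main obstacle is the weight bookkeeping in the third paragraph: since $w(\cdot)$ is a maximum over \emph{all} branches at a vertex while the edge formula only controls the two components obtained by deleting one edge, one must carefully bound the branches that avoid that edge, and the balanced split in which both components have $n/2$ vertices has to be treated on its own.
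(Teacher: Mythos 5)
The paper does not prove this proposition at all --- it is quoted directly from Zelinka's paper (\cite{Z}, Theorems 2 and 3), so there is no in-paper argument to compare yours against. Your blind proof is correct and self-contained. The edge formula $g(v)-g(u)=|V(T_u)|-|V(T_v)|$ is right, the nested-components argument does give strictly increasing increments $n-2|S_i|$ along any path, and from this the local-to-global minimum property, the adjacency of median vertices, and the characterisation ``$v$ is a median vertex iff $w(v)\le n/2$'' all follow as you say. Your centroid bookkeeping also checks out: if the unique heavy branch at $v$ points toward $v'$, the branch at $v'$ back toward $v$ has $n-w(v)<w(v)$ edges and every other branch at $v'$ sits inside the heavy branch minus the edge $vv'$, so $w(v')\le\max\{n-w(v),\,w(v)-1\}<w(v)$; and in the balanced case the two components of $T-uv$ having $n/2$ vertices each forces $w(u)=w(v)=n/2$, so minimality of $w$ transfers from $u$ to $v$. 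Thus both the median and the centroid equal $\{v: w(v)\le n/2\}$, which is essentially the classical route (the same characterisation underlies Zelinka's and Jordan's treatments), so nothing is missing; the only cosmetic caveat is that trivial small cases ($n\le 2$) should be noted as immediate.
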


By assuming the vertices of $T$ as telephone lines and the path between two vertices $u$ and $v$ as a representaion of a telephone call between $u$ and $v$, Mitchell in \cite{M} defined another central part of a tree called the telephone center. Assuming that at a given time a vertex can be involved in only one call, define the {\it switchboard number} of $v$ denoted by $sb(V)$ as the maximum number of calls which can pass through $v$ at a given time. The {\it telephone center} of $T$ is the set of vertices having maximum switchboard number. The following result regarding  the telephone center is due to Mitchell.
\begin{proposition}\label{prop:tel}(\cite{M},Corollary 3)
The telephone center of a tree consists of either a single vertex or two adjacent vertices and it coincides with the centroid.
\end{proposition}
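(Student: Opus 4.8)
The plan is to obtain an explicit formula for the switchboard number $sb(v)$ in terms of the weight of $v$, and then read off which vertices maximize it. Write $w(v)$ for the weight of $v$, the largest number of edges in a branch at $v$; since a branch containing $n_i$ vertices besides $v$ has exactly $n_i$ edges, $w(v)$ is also the largest number of vertices in a component of $T-v$. First I would unwind the definition, reading ``a vertex is involved in a call'' as ``a vertex is an endpoint of that call'' — an interior vertex of the path merely relays, which is exactly the job of a switchboard. Then a family of simultaneous calls is a family of paths of $T$, no two of which share an endpoint, and a call \emph{passes through} $v$ precisely when $v$ is an interior vertex of its path, equivalently when the two endpoints of the call lie in two different components of $T-v$. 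Hence $sb(v)$ equals the maximum number of pairwise disjoint pairs $\{x_i,y_i\}\subseteq V(T)\setminus\{v\}$ with $x_i$ and $y_i$ always in different components of $T-v$: any such family of pairs produces the required calls, and conversely the calls through $v$ in any configuration produce such a family.

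This quantity is a maximum matching. If the components of $T-v$ have $n_1\ge n_2\ge\cdots\ge n_k$ vertices, so that $\sum_i n_i=n-1$ and $n_1=w(v)$, then $sb(v)$ is the size of a maximum matching of the complete $k$-partite graph with parts of sizes $n_1,\dots,n_k$, which is the standard quantity
\[
sb(v)=\min\Big\{\Big\lfloor\tfrac{n-1}{2}\Big\rfloor,\ (n-1)-w(v)\Big\}.
\]
When $w(v)>(n-1)/2$ the largest component is the bottleneck, its vertices being matchable only to vertices outside it; otherwise the complete multipartite graph has a near-perfect matching. The boundary cases, such as $v$ a leaf with $sb(v)=0$, are covered by the same formula.

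It then remains only to do the bookkeeping. The maximum of $sb$ over $V(T)$ is $\lfloor(n-1)/2\rfloor$, and $sb(v)$ attains it if and only if $(n-1)-w(v)\ge\lfloor(n-1)/2\rfloor$, i.e.\ if and only if $w(v)\le\lceil(n-1)/2\rceil=\lfloor n/2\rfloor$. I would then invoke (or reprove in a couple of lines from Jordan's weight argument) the classical characterization that a vertex $v$ of a tree on $n$ vertices is a centroid vertex exactly when $w(v)\le\lfloor n/2\rfloor$ — equivalently, when no component of $T-v$ has more than $\lfloor n/2\rfloor$ vertices — whereas every non-centroid vertex has $w(v)\ge\lfloor n/2\rfloor+1$. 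Combining this with the formula identifies the set of maximizers of $sb$, i.e.\ the telephone center, with $C_d(T)$; and since $C_d(T)$ is a single vertex or two adjacent vertices by Proposition~\ref{prop:cnt}(2), so is the telephone center.

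The main obstacle I anticipate is not the arithmetic but making the first reduction airtight: one must verify that relaying a call through an interior vertex imposes no constraint beyond endpoint-disjointness, so that the matching bound is genuinely realized by simultaneous calls, and one must handle the extremal configurations carefully. The second delicate point is the \emph{strict} decrease of $sb$ the moment one leaves the centroid, which is precisely the claim that non-centroid vertices have weight at least $\lfloor n/2\rfloor+1$; this rests on the monotone behaviour of $w$ along any path directed toward the centroid, itself a consequence of Proposition~\ref{prop:cnt}(2).
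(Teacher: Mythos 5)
This proposition is quoted from Mitchell \cite{M} and the paper gives no proof of it, so there is nothing internal to compare against; judged on its own, your argument is correct and is essentially Mitchell's original one. Your reading of ``involved in a call'' as ``an endpoint of the call'' is the right disambiguation (the alternative, vertex-disjoint routing, would make $sb(v)=\lfloor \deg(v)/2\rfloor$ and the statement false), and with it the reduction to a maximum matching in the complete multipartite graph on the components of $T-v$ gives the correct formula $sb(v)=\min\{\lfloor (n-1)/2\rfloor,\,(n-1)-w(v)\}$, whose maximizers are exactly the vertices with $w(v)\le\lfloor n/2\rfloor$, i.e.\ the centroid by the classical weight characterization; the ``single vertex or two adjacent vertices'' part then follows from Proposition~\ref{prop:cnt}(2). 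The only step you should write out rather than wave at is the characterization ``$v\in C_d(T)$ iff $w(v)\le\lfloor n/2\rfloor$'' (equivalently, that non-centroid vertices have weight at least $\lfloor n/2\rfloor+1$), which is a short Jordan-type argument but is doing real work in identifying the two sets exactly rather than merely showing the telephone center meets the centroid.
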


In \cite{Sw}, Szekely and Wang  defined another central part of a tree as following: For $v\in V(T),$ let $f_T(v)$ be the number of subtrees of $T$ containing $v.$ The {\it subtree core} of $T$ is the set of vertices $v$ for which $f_T(v)$ is maximum. We denote the subtree core of $T$ by $S_c(T)$.
\begin{proposition}\label{SC-00}(\cite{Sw},Theorem 9.1)
The subtree core of a tree is either a single vertex or two adjacent vertices.
\end{proposition}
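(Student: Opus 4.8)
The plan is to reduce the proposition to two facts: that $f_T$ is, in a precise sense, strictly unimodal along every path of $T$ (so that its maximum over the vertices of a path is attained on at most two consecutive vertices), and that a set of pairwise adjacent vertices of a tree has at most two elements. The second fact is immediate, since three pairwise adjacent vertices would span a triangle, contradicting that $T$ is acyclic; so all the work lies in the first.

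The main tool is a product formula for $f_T$. For $v \in V(T)$ with neighbours $w_1,\dots,w_d$, let $b_i$ be the number of subtrees containing $w_i$ in the component of $T-vw_i$ that contains $w_i$. A subtree of $T$ containing $v$ is obtained by choosing, independently in the branch at $v$ toward each $w_i$, either nothing or the edge $vw_i$ together with one of those $b_i$ subtrees; hence $f_T(v)=\prod_{i=1}^{d}(1+b_i)$. Applying this to an edge $e=xy$, and writing $L$ (resp.\ $R$) for the number of subtrees containing $x$ (resp.\ $y$) in the component of $T-e$ on the side of $x$ (resp.\ of $y$), one gets $f_T(x)=L(1+R)$ and $f_T(y)=R(1+L)$, so
\[
 f_T(x)-f_T(y)=L-R .
\]

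Next I would fix a path $v_0,v_1,\dots,v_k$ in $T$ and, for the edge $e_i=v_iv_{i+1}$, let $L_i$ and $R_i$ be the two quantities above, so that $f_T(v_i)-f_T(v_{i+1})=L_i-R_i$. The key step is that the component of $T-e_{i+1}$ on the side of $v_{i+1}$ contains, as the branch at $v_{i+1}$ toward $v_i$, exactly the component of $T-e_i$ on the side of $v_i$ with $v_{i+1}$ adjoined; feeding this into the product formula at $v_{i+1}$ gives $L_{i+1}=(1+L_i)\cdot(\text{a factor}\ge 1)\ge 1+L_i$, and symmetrically $R_i\ge 1+R_{i+1}$. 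Since every $L_i\ge 1$, the sequence $(L_i)$ is strictly increasing and $(R_i)$ strictly decreasing, so $L_i-R_i$ is strictly increasing in $i$. Consequently $f_T(v_0),f_T(v_1),\dots,f_T(v_k)$ is strictly increasing up to some point, possibly takes the same value across one further step, and is strictly decreasing afterwards; in particular its maximum is attained on at most two consecutive $v_i$'s. If $u,w\in S_c(T)$ with $d(u,w)\ge 2$, then on the $u$--$w$ path both endpoints would attain the maximum of $f_T$, which the previous sentence forbids. Hence any two vertices of $S_c(T)$ are adjacent, and the second fact finishes the proof.

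The step I expect to be the main obstacle is pinning down the branch bookkeeping behind the recursion $L_{i+1}\ge 1+L_i$ --- identifying correctly which branch of the larger component plays the role of the smaller one --- together with the careful (if routine) case check needed to deduce from ``$L_i-R_i$ strictly increasing'' that the two ends of a path of length at least $2$ cannot both maximise $f_T$, the single possible equality step included.
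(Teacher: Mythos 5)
Your argument is correct and follows essentially the same route as the source result the paper relies on: your identity $f_T(x)-f_T(y)=L-R$ for an edge $\{x,y\}$, combined with the strict monotonicity of the $L_i$ and $R_i$ along a path, is precisely a proof of the strict concavity $2f_T(v)>f_T(u)+f_T(w)$ recorded as Lemma~\ref{SC-0}, from which the conclusion that the maximizing set is a single vertex or two adjacent vertices is deduced exactly as you do. The branch bookkeeping you flag is fine, since the branch at $v_{i+1}$ toward $v_i$ inside the component of $T-e_{i+1}$ containing $v_{i+1}$ is exactly the component of $T-e_i$ containing $v_i$, giving $L_{i+1}\geq 1+L_i$ as claimed.
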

 One can find a tree (see Example \ref{ex}) in which center, centroid and subtree core are pairwise disjoint. Since the median and the telephone center coincide with the centroid, we mainly have three distinct central parts of a tree defined on the basis of edges and distances. All the above central parts are combinatorially defined. We now algebraically define a central part of a tree which is different from all the above combinatorially defined centres.
 
For a graph $G$ with $V(G)=\{v_1,v_2,\ldots,v_n\}$, the degree matrix $D(G)=(d_{ij})$ is the $n \times n$ diagonal matrix with $d_{ii}$ is equal to degree of the vertex $v_i$ for $i=1,2,\ldots, n.$ The adjacency matrix $A(G)=(a_{ij})$ is the $n\times n$ matrix where $a_{ij}=1$ if $v_i$ and $v_j$ are adjacent and $0$ otherwise. The Laplacian matrix of $G$, denoted by $L(G)$, is defined as $L(G)=D(G)-A(G)$. The Laplacian matrix $L(G)$ is a symmetric, positive semi definite matrix. The smallest eigenvalue of $L(G)$ is $0$ with all one vector as an eigenvector. The second smallest eigenvalue of $L(G)$ is called algebraic connectivity of $G$ as it is positive if and only if $G$ is connected (see \cite{F1}). We denote the second smallest eigenvalue of $L(G)$ by $\mu$. An eigenvector corresponding to $\mu$ is called a Fiedler vector of $G.$ If $Y$ is a Fiedler vector of $G$, by $Y(v)$ we mean the co-ordinate of $Y$ corresponding to the vertex $v.$ A vertex $v\in V(G)$ is called a characteristic vertex  if $Y(v)=0$ and there exists a vertex $u\in V(G)$ adjacent to $v$ such that $Y(u)\neq 0.$ An edge $e=\{u,v\}$ is called a characteristic edge of $G$ if $Y(u)Y(v)<0.$ The characteristic set of $G$ with respect to  $Y$ is the set of all characteristic vertices and characteristic edges of $G$. The concept of characteristic set in terms of characteristic vertices and characteistic edges was first introduced by Bapat and Pati in \cite{Bp}. The following results shows the importance of the study of characteristic set of a tree.
 
\begin{proposition} (\cite{F2},Theorem 3,14 and \cite{Rm},Theorem 2) 
The characteristic set of a tree is either a vertex or an edge which is same for any Fiedler vector.
\end{proposition}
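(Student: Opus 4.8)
\medskip

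\noindent\textbf{Proof proposal.} Let $T$ be a tree on $n$ vertices, let $\mu>0$ be its algebraic connectivity, and let $Y$ be any Fiedler vector, so that $L(T)Y=\mu Y$, $Y\neq 0$ and $\sum_{v}Y(v)=0$. Since $Y\neq 0$, its coordinates cannot all be of one sign; choosing a vertex with positive value and a vertex with negative value and walking along the path between them, one must encounter either an edge whose endpoints carry values of opposite sign or a vertex with value $0$ that has a neighbour with nonzero value. Hence the characteristic set of $T$ with respect to $Y$ is nonempty, and it remains to prove (i) that it consists of a single vertex or a single edge, and (ii) that it does not depend on the choice of $Y$.

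The tool I would use is the analysis of Fiedler eigenvectors of a tree through \emph{bottleneck matrices}, going back to Fiedler and developed by Kirkland, Neumann and Shader. For a vertex $v$ and a branch $B$ of $T$ at $v$, let $M_B$ be the principal submatrix of $L(T)$ with rows and columns indexed by $V(B)\setminus\{v\}$. Since every neighbour of a vertex of $B$ other than $v$ again lies in $B$, the matrix $M_B$ is the Laplacian of the tree $B-v$ with a $+1$ added to one diagonal entry; it is therefore an invertible $M$-matrix whose inverse is entrywise positive, and by Perron--Frobenius its Perron value $\rho_B:=\rho(M_B^{-1})$ is a simple eigenvalue with a strictly positive eigenvector. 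Call $B$ a \emph{Perron branch at $v$} if $\rho_B$ is maximal among all branches at $v$. Restricting $L(T)Y=\mu Y$ to the rows indexed by $V(B)\setminus\{v\}$, in the case $Y(v)=0$, yields $M_B^{-1}\bigl(Y|_{V(B)\setminus\{v\}}\bigr)=\tfrac{1}{\mu}\,Y|_{V(B)\setminus\{v\}}$; hence a branch on which $Y$ does not vanish must have $\tfrac{1}{\mu}$ among the eigenvalues of $M_B^{-1}$, so that $\rho_B\geq\tfrac{1}{\mu}$, whereas a branch with $\rho_B<\tfrac{1}{\mu}$ carries only zero values.

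The plan then follows the dichotomy of the structure theory. Case 1: some vertex $v_0$ has two or more Perron branches. Here I would show that $v_0$ is the unique vertex with this property, that the common maximal Perron value $\rho_0$ at $v_0$ equals $\tfrac{1}{\mu}$ (the bound $\rho_0\leq\tfrac{1}{\mu}$ follows by inserting a suitably signed and scaled combination of the Perron vectors of two such branches, extended by $0$ and translated to be orthogonal to $\mathbf 1$, into the Rayleigh quotient; the opposite inequality then follows from the previous paragraph), and that every Fiedler vector is a linear combination of the sign-adjusted Perron vectors of the Perron branches at $v_0$, extended by $0$. Such a vector vanishes at $v_0$ and is strictly of one sign on each branch at $v_0$, so its characteristic set equals $\{v_0\}$. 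Case 2: no vertex has two or more Perron branches. Then $\mu$ is simple, and I would pin down the unique edge $\{u_0,w_0\}$ for which the Perron branch at $u_0$ is the one containing $w_0$ and the Perron branch at $w_0$ is the one containing $u_0$; a Fiedler vector is then nowhere zero, with its single sign change across $\{u_0,w_0\}$, so its characteristic set equals $\{u_0,w_0\}$. In both cases the characteristic set has been described purely through the numbers $\rho_B$ attached to $T$ itself, with no reference to $Y$; this gives (ii), and (i) is immediate from the description.

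The main obstacle is the bookkeeping behind this dichotomy: establishing how the Perron values $\rho_B$ vary as the base vertex $v$ and the branch $B$ move along a path of $T$ (a monotonicity statement), deducing from it that a vertex with two or more Perron branches, if one exists, is unique and sits where the Perron branches of adjacent vertices point toward it, treating the edge case symmetrically, and matching the $\rho_B$-description to the zero set and sign pattern of an arbitrary Fiedler vector. The facts needed along the way --- that each $M_B$ is an invertible $M$-matrix with positive inverse and simple Perron value, and the variational formula $\mu=\min\{Z^{\top}L(T)Z:\ \|Z\|=1,\ Z\perp\mathbf 1\}$ used to locate $\tfrac{1}{\mu}$ among the $\rho_B$ --- are standard, and the argument would follow the acyclic-matrix analysis of Fiedler together with the Perron-component framework of Kirkland, Neumann and Shader.
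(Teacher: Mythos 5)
The paper itself offers no proof of this proposition: it is quoted with citations to Fiedler and to Merris, and the only related machinery the paper later imports is exactly the Perron-component framework of Kirkland, Neumann and Shader (its Theorems 2.7--2.9). Your plan follows that same standard route, and nothing in it is wrong-headed: the bottleneck matrices $M_B$ are indeed invertible $M$-matrices with positive inverses, the restriction argument for a vector vanishing at $v$ is correct, and the two-case dichotomy (a vertex with two or more Perron branches versus a unique edge whose endpoints see each other's branch as the Perron branch) is precisely how the Type I/Type II classification is proved in the literature.

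The difficulty is that, as written, the substance of the proposition is deferred rather than proved. The claims you flag with ``I would show that'' --- uniqueness of the vertex $v_0$ admitting two Perron branches, the fact that the whole $\mu$-eigenspace is spanned by the sign-adjusted branch Perron vectors extended by zero in Case 1, and simplicity of $\mu$ together with the nowhere-vanishing, single-sign-change structure of a Fiedler vector in Case 2 --- are exactly the content of Fiedler's and Merris's theorems; without them the invariance of the characteristic set under the choice of $Y$ does not follow from your $\rho_B$-description. There is also a local circularity: you derive $\rho_0\geq \tfrac{1}{\mu}$ ``from the previous paragraph,'' but that paragraph presupposes a Fiedler vector with $Y(v_0)=0$ that is not identically zero on some branch at $v_0$, which is not yet known at that stage of Case 1. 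This particular step is repairable without the eigenvector structure: delete the row and column of $v_0$ from $L(T)$; the resulting matrix is block diagonal with blocks $M_{B_i}$, its smallest eigenvalue is $1/\rho_0$, and Cauchy interlacing gives $1/\rho_0\leq\mu$. With that fix and with full proofs of the deferred structural claims (monotonicity of Perron values along paths, uniqueness, and the eigenspace description), your outline does become a complete proof, but in its present form it is a correct plan rather than an argument.
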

 We can consider the characteristic edge as two adjacent vertices and hence for a tree $T$ the characteristic set behaves like a centre. Since the characteristic set of a tree $T$ is independent of the choice of Fiedler vector, we denote it by $\chi(T)$. Next we have given one example of a tree where center, centroid , subtree core and characteristic set are disjoint.
  \begin{example}\label{ex}
In the tree $T$  (See Figure \ref{fgr-0}), the vertex $6$ is the center as its eccentricity is $5$, less than any other vertex. The vertex $9$ is the centroid as it has weight $8$,  less than any other vertex and the subtree core is the vertex $10$, as the number of subtrees containing $10$ is $10\times 2^7$, more than any other vertex. Also $\mu(T)=.0483$ and $Y=(-0.4116,-0.3917,-0.3528,-0.2970,\\-0.2267,-0.1455,
	-0.0573,0.0337,0.1231,0.2065,0.2170,0.2170,0.2170,0.2170,0.2170,\\0.2170,0.2170)^T$ is a Fiedler vector. So $\chi(T)=\{7,8\},$ which is disjoint from each of the center, centroid and subtree core.
 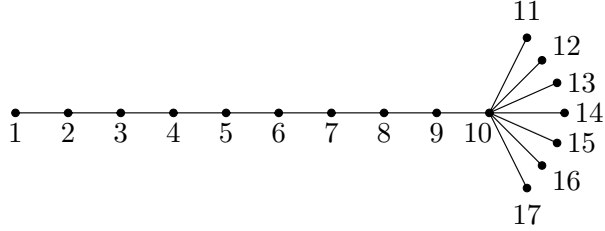
\begin{figure}[h] 
 	\begin{center}
 		\begin{tikzpicture}
 		\filldraw (0,0)node [below]{1} circle [radius= .5mm]--(.7,0)node [below]{2} circle[radius=.5mm]--(1.4,0)node [below]{3} circle [radius= .5mm]--(2.1,0)node [below]{4} circle[radius=.5 mm]--(2.8,0)node [below]{5} circle [radius= .5mm]--(3.5,0)node [below]{6} circle[radius=.5 mm]--(4.2,0) node [below]{7}circle [radius= .5mm]--(4.9,0)node [below]{8} circle[radius=.5 mm]--(5.6,0) node [below]{9}circle [radius= .5mm]--(6.3,0)circle[radius=.5 mm]--(7.3,0)node [right]{14} circle[radius=.5 mm];
 			\draw (6.15,0) node [below]{10};
 			\filldraw (6.8,1)node[above,outer sep=3pt]{11}circle [radius=.5mm]--(6.3,0);
 			\filldraw (7,.7) circle [radius=.5mm]--(6.3,0);
 			\filldraw (7.2,.4) node[right]{13}circle [radius=.5mm]--(6.3,0);
 			\draw (7,.9)node[right]{12};
 			\filldraw (6.8,-1)node[below,outer sep=3pt]{17}circle [radius=.5mm]--(6.3,0);
 			\filldraw (7,-.7) circle [radius=.5mm]--(6.3,0);
 			\filldraw (7.2,-.4) node[right]{15}circle [radius=.5mm]--(6.3,0);
 			\draw (7,-.9)node[right]{16};
 			\end{tikzpicture}
 		\end{center}
 		\caption{ Tree with disjoint central parts}\label{fgr-0}
 	\end{figure}
 	
 \end{example}

Thus we have four central parts in a tree which may be pairwise disjoint for some trees. It is natural to ask how far or close any two of these centres are in a tree on $n$ vertices.  All these four centres coincide in both path and star. So over trees on $n$ vertices, the minimum distance between any two of these centres is zero. Maximizing the distance between any two of these centres over trees on $n$ vertices are studied by many researchers in last two decades. For the pair \{center , centroid\}, the distance is studied in \cite{P} and \cite{Sswy}. Maximum distances for the pair \{center, characteristic set\} and \{centroid, characeristic set\} are studied in \cite{P} and \cite{Afjk}. Also  in \cite{Dp} and \cite{Sswy}, maximum distances for the pair \{center, subtree core\} and \{centroid, subtree core\} are studied. In this paper, we have obtained a tree which maximizes  the distance between subtree core and characteristic set over all trees on $n$ vertices. We have also studied the pairwise distance of these centres over trees on $n$ vertices with fixed diameter $d.$

The paper is organized in the following way: In Section $2$, we  discuss some results related to characteristic set and subtree core of trees which are very important and useful to prove our main results. In Section $3$, we obtain the tree which maximize the distance  between characteristic set and subtree core over all trees on $n$ vertices. We also study the asymptotic nature of this distance. In Section $4$, we discuss the  problem of extremizing the  distance between any two central parts of  trees on $n$ vertices with fixed diameter.  We partially answer some of these problems.

\section{Preliminaries}

In this section we will discuss some results related to our problem of maximizing the distance between characteristic set and subtree core over all trees on $n$ vertices. Following three lemmas are related to subtree core of trees and important for our study. 

\begin{lemma}\label{SC-0}(\cite{Sw},Theorem 9.1)
Let $T$ be a tree and $u,v,w\in V(T).$ If $\{u,v\},\{v,w\}\in E(T)$ then $2f_T(v)>f_T(u)+f_T(w).$
\end{lemma}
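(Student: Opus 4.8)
The plan is to sort the subtrees of $T$ by how they meet the three consecutive vertices $u,v,w$. Because $v$ lies on the unique $u$--$w$ path in $T$, any subtree containing both $u$ and $w$ must contain $v$; hence every subtree of $T$ falls into exactly one of seven classes. I would write $a$, $b$, $c$ for the numbers of subtrees whose intersection with $\{u,v,w\}$ equals $\{u\}$, $\{v\}$, $\{w\}$ respectively, $p$ and $q$ for those with intersection $\{u,v\}$ and $\{v,w\}$, and $r$ for those containing all of $u,v,w$ (the would-be class with intersection $\{u,w\}$ being empty). Then $f_T(u)=a+p+r$, $f_T(w)=c+q+r$ and $f_T(v)=b+p+q+r$, whence
\[
2f_T(v)-f_T(u)-f_T(w)=2b+p+q-a-c ,
\]
so the lemma reduces to the purely combinatorial inequality $2b+p+q>a+c$.

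The heart of the argument is a pair of injections. First I would observe that any subtree $S$ with $S\cap\{u,v,w\}=\{u\}$ must lie entirely inside the connected component of $T-v$ containing $u$: if some vertex $x\in S$ lay in a different component of $T-v$, then the $x$--$u$ path of $T$, which stays inside the connected vertex set $S$, would be forced through $v\notin S$, a contradiction. Consequently $S\cup\{v\}$ is again a subtree; it contains $u$ and $v$ but not $w$ (since $w$ lies in a different component of $T-v$), so it is of type $\{u,v\}$. The map $S\mapsto S\cup\{v\}$ is evidently injective, giving $a\le p$, and the mirror-image argument gives $c\le q$.

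To finish, I would use that the single-vertex set $\{v\}$ is itself a subtree of type $\{v\}$, so $b\ge 1$; combining this with $a\le p$ and $c\le q$ yields $2b+p+q-a-c\ge 2b\ge 2>0$, which is the desired strict inequality (in fact it shows the slightly stronger $2f_T(v)\ge f_T(u)+f_T(w)+2$). The only point requiring genuine care is the confinement claim, namely that a subtree meeting $\{u,v,w\}$ only in $u$ cannot escape the component of $T-v$ that contains $u$; this is exactly where the tree structure enters (uniqueness of paths and $v$ being a cut vertex), and everything else is routine bookkeeping over the seven classes.
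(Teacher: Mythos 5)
Your proof is correct. Since $v$ separates $u$ from $w$, the seven-class decomposition by intersection with $\{u,v,w\}$ is exhaustive, the identities $f_T(u)=a+p+r$, $f_T(w)=c+q+r$, $f_T(v)=b+p+q+r$ are right, and the two injections $S\mapsto S\cup\{v\}$ (valid because such an $S$ is confined to the component of $T-v$ containing $u$, resp.\ $w$, and $v$ is adjacent to $u$, resp.\ $w$) give $a\le p$ and $c\le q$; together with $b\ge 1$ this yields the strict inequality, indeed the stronger bound $2f_T(v)\ge f_T(u)+f_T(w)+2$, which is sharp already for the path on three vertices. Note, however, that the paper itself offers no proof to compare against: Lemma 2.1 is quoted verbatim from Sz\'ekely and Wang (Theorem 9.1 of \cite{Sw}), so your argument is a self-contained replacement rather than a variant of anything in this paper. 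It is also slightly different in flavour from the usual route in the literature, which works edge by edge using the factorization $f_T(u)=f_{T_u}(u)\bigl(1+f_{T_v}(v)\bigr)$ for the two components $T_u,T_v$ of $T$ minus the edge $\{u,v\}$, and deduces concavity along paths from the resulting difference formulas; your classification-plus-injection argument is more elementary, avoids any multiplicative identities, and gives the explicit additive gap of $2$, at the cost of being tailored to the three fixed vertices rather than producing the general transfer identities that are reused elsewhere in subtree-counting arguments.
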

 
\begin{lemma}\label{SC-01}(\cite{Dp},Lemma 2.2)
Let $T$ be a tree, $v \in S_c(T)$ and $y$ be a pendant vertex of $T$ not adjacent to $v.$ If $\tilde{T}$ is the tree obtained from $T$ by detaching $y$ from $T$ and adding it as a pendant vertex adjacent to $v,$ then $S_c(\tilde{T})=\{v\}.$
\end{lemma}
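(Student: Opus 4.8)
The plan is to compute $f_{\tilde{T}}$ directly in terms of subtree counts in the forest $T-y$ and to compare them with those in $T$. Let $x$ be the neighbour of $y$ in $T$; by hypothesis $x\neq v$. Since $\tilde T-y=T-y$, any subtree of $\tilde T$ (resp.\ of $T$) either avoids $y$, and is then a subtree of $T-y$, or contains $y$ and hence, being connected, contains the unique neighbour of $y$ --- namely $v$ in $\tilde T$ and $x$ in $T$. Splitting the subtrees through a vertex $w\notin\{v,y\}$ along these lines gives
\begin{equation*}
f_{\tilde T}(w)=f_{T-y}(w)+N(v,w),\qquad f_{T}(w)=f_{T-y}(w)+N(x,w),
\end{equation*}
where $N(a,b)$ denotes the number of subtrees of $T-y$ containing both $a$ and $b$ (so $N(a,a)=f_{T-y}(a)$); in particular $f_{\tilde T}(v)=2f_{T-y}(v)$. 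First I would record these identities carefully, noting the degenerate cases $w=x$ (where the same formula still holds) and $w=y$ (where instead $f_{\tilde T}(y)=1+f_{T-y}(v)<2f_{T-y}(v)$, handled directly since $T-y$ has at least two vertices).

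Subtracting the two identities, for each $w\notin\{v,y\}$ one obtains
\begin{equation*}
f_{\tilde T}(v)-f_{\tilde T}(w)=\bigl(f_T(v)-f_T(w)\bigr)+\bigl(f_{T-y}(v)-N(x,v)\bigr)-\bigl(N(v,w)-N(x,w)\bigr).
\end{equation*}
The first bracket is $\ge 0$ because $v\in S_c(T)$ makes $v$ a maximiser of $f_T$. The heart of the proof is to show that the remaining two brackets together contribute a strictly positive amount. Now $f_{T-y}(v)-N(x,v)$ is exactly the number of subtrees of $T-y$ containing $v$ but not $x$, and $N(v,w)-N(x,w)=|\mathcal P|-|\mathcal Q|$, where $\mathcal P$ (resp.\ $\mathcal Q$) is the family of subtrees of $T-y$ containing $v$ and $w$ but not $x$ (resp.\ containing $x$ and $w$ but not $v$), since the subtrees counted by both $N(v,w)$ and $N(x,w)$ --- those containing $v,x,w$ simultaneously --- cancel. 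Hence $N(v,w)-N(x,w)\le|\mathcal P|$, while $\mathcal P$ is a \emph{proper} subfamily of the subtrees of $T-y$ containing $v$ but not $x$: the singleton $\{v\}$ lies in the latter (as $x\neq v$) but not in $\mathcal P$ (as $w\neq v$). Therefore $f_{T-y}(v)-N(x,v)>N(v,w)-N(x,w)$, so $f_{\tilde T}(v)>f_{\tilde T}(w)$; combined with the elementary case $w=y$, this yields $S_c(\tilde T)=\{v\}$.

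I expect the only genuine obstacle to be bookkeeping: arranging the subtree decomposition so that it is simultaneously valid in $T$ and in $\tilde T$ (they share the vertex set and the forest $T-y$), and keeping track of the overlaps when some of $v$, $x$, $w$ lie on a common path. No deeper structural property of the subtree core seems to be needed --- only the defining maximality of $v$ and the trivial fact that a single vertex is itself a subtree; in particular Lemma~\ref{SC-0} is not required for this route, although it offers an alternative argument via the unimodality of $w\mapsto f_{T-y}(w)$ along paths.
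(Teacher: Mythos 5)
Your argument is correct. Note, however, that the paper does not prove this lemma at all: it is quoted from \cite{Dp} (Lemma 2.2), so there is no in-paper proof to compare against, and your proposal should be judged as a self-contained alternative to the cited source. As such it holds up: since $T-y=\tilde T-y$ and $y$ is pendant (with neighbour $x$ in $T$, $v$ in $\tilde T$), the identities $f_T(w)=f_{T-y}(w)+N(x,w)$ and $f_{\tilde T}(w)=f_{T-y}(w)+N(v,w)$ for $w\neq y$ are exactly right, your displayed identity for $f_{\tilde T}(v)-f_{\tilde T}(w)$ checks out algebraically, the first bracket is nonnegative by the maximality of $v$ in $T$, and the strict positivity of the remainder follows cleanly from the observation that the subtrees of $T-y$ containing $v,w$ but not $x$ form a proper subfamily of those containing $v$ but not $x$ (the singleton $\{v\}$ witnessing properness, using $x\neq v$ and $w\neq v$). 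The boundary cases $w=x$ (where $\mathcal P=\emptyset$) and $w=y$ (where $f_{\tilde T}(y)=1+f_{T-y}(v)<2f_{T-y}(v)$, valid because $T-y$ has at least two vertices when $y$ is not adjacent to $v$) are handled, and strictness at every $w\neq v$ gives $S_c(\tilde T)=\{v\}$, not merely $v\in S_c(\tilde T)$. What your route buys is economy of prerequisites: it uses only the defining maximality of $v$ and the fact that a single vertex is a subtree, with no appeal to the strict concavity statement (Lemma~\ref{SC-0}) or to the edge criterion of Proposition~\ref{SC-04} that arguments of this kind often invoke.
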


\begin{lemma}\label{SC-02} (\cite{Dp},Lemma 3.1)
Let $T$ be a tree, $v \in S_c(T)$ and $B$ be a branch at $v$. Let $u$ be the vertex in $B$ adjacent to $v$ and $x$ be a pendant vertex of $T$ in $B$. Suppose that $B$ is not a path. Let $y$ be the vertex closest to $x$ with $d(y)\geq 3$ and $[y,y_1,y_2,\ldots,y_m=x]$ be the path connecting $y$ and $x.$ Let $z \neq y$ be a vertex of $B$ such that the path from $v$ to $z$ contains $y$ but not $y_1.$ Let $\tilde{T}$ be the tree obtained from $T$ by detaching the path $[y_1,y_2,\ldots,y_m]$ from $y$ and attaching it to $z.$ Then $f_{\tilde{T}}(v)>f_{\tilde{T}}(u).$ 
 \end{lemma}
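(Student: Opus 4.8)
The plan is to convert the desired inequality into a statement about the single branch $B$, and to use the hypothesis $v\in S_c(T)$ only through the comparison it forces across the edge $\{v,u\}$.

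I would start from the standard multiplicative identity for counting subtrees across an edge: if $\{a,b\}\in E(G)$ and $G_a,G_b$ denote the components of $G-\{a,b\}$ containing $a$ and $b$, then $f_G(a)=f_{G_a}(a)\bigl(1+f_{G_b}(b)\bigr)$, since a subtree through $a$ either stays inside $G_a$ or crosses the edge and is then a subtree of $G_a$ through $a$ joined to a subtree of $G_b$ through $b$. Applying this to the edge $\{v,u\}$ in $T$, with $R$ the component containing $v$ and $S:=B\setminus\{v\}$ the component containing $u$, gives $f_T(v)-f_T(u)=f_R(v)-f_S(u)$, and the same identity in $\tilde T$ gives $f_{\tilde T}(v)-f_{\tilde T}(u)=f_R(v)-f_{\tilde S}(u)$, because the surgery producing $\tilde T$ takes place entirely inside $S$ (the path $[y_1,\dots,y_m]$ and the vertices $y,z$ all lie in $B\setminus\{v\}$), so $R$ is unchanged. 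Since $v\in S_c(T)$ gives $f_T(v)\ge f_T(u)$, i.e. $f_R(v)\ge f_S(u)$, it suffices to prove the strict inequality $f_{\tilde S}(u)<f_S(u)$, from which $f_{\tilde T}(v)-f_{\tilde T}(u)=f_R(v)-f_{\tilde S}(u)>f_R(v)-f_S(u)\ge 0$.

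To prove $f_{\tilde S}(u)<f_S(u)$, root $S$ at $u$. Here $y$ is a non-root vertex (if $y=v$ then $y_1=u$ since $v$ is pendant in $B$, and then no vertex $z$ with the stated properties exists), and $z$ is a strict descendant of $y$ lying off the path $[y_1,\dots,y_m]$. Splitting $S$ at the edge joining $y$ to its parent, whose $u$-side is untouched by the surgery, the same edge-counting reasoning reduces the claim to $f_{\tilde W}(y)<f_W(y)$, where $W$ is the subtree of $S$ rooted at $y$ and $\tilde W$ is obtained by detaching $[y_1,\dots,y_m]$ from $y$ and reattaching it at $z$. Writing the children of $y$ in $W$ as $y_1$ and $c_1,\dots,c_r$, letting $X_i$ be the subtree rooted at $c_i$, letting $X:=X_j$ be the branch containing $z$, and letting $X'$ be the result of reattaching $[y_1,\dots,y_m]$ at $z$ inside $X$, the multiplicative formula at $y$ gives $f_W(y)=(1+m)\prod_{i=1}^{r}\bigl(1+f_{X_i}(c_i)\bigr)$, the factor $1+m$ being the contribution of the pendant path, and $f_{\tilde W}(y)=\bigl(1+f_{X'}(c_j)\bigr)\prod_{i\neq j}\bigl(1+f_{X_i}(c_i)\bigr)$; so the claim becomes $(1+m)\bigl(1+f_X(c_j)\bigr)>1+f_{X'}(c_j)$.

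Finally I would evaluate $f_{X'}(c_j)$ by sorting subtrees of $X$ through $c_j$ according to whether they also contain $z$: if $A$ of them avoid $z$ and $C$ of them contain $z$, then $f_X(c_j)=A+C$, while a subtree of $X'$ through $c_j$ is such a subtree of $X$ together with, whenever it contains $z$, a choice of one of the $m$ initial segments of $[y_1,\dots,y_m]$ or of none, so $f_{X'}(c_j)=A+(m+1)C$. Then $(1+m)(1+A+C)-\bigl(1+A+(m+1)C\bigr)=m(1+A)>0$, which finishes the argument. The edge-splitting reductions are routine once the multiplicative identity is in hand; the one place that requires care is this last count — isolating the factor $1+m$ coming from the path and recognizing that only the subtrees of $X$ that reach $z$ can be extended into it — and it is precisely there that the strictness comes from.
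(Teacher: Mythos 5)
Your proof is correct: the cross-edge identity $f_T(v)-f_T(u)=f_R(v)-f_S(u)$, the observation that the surgery happens entirely on the $u$-side so $R$ is untouched, and the final count $f_{X'}(c_j)=A+(m+1)C$ giving the surplus $m(1+A)>0$ all check out (including the needed facts that $y\neq v$ and that $z$ lies strictly below $y$ off the $y_1$-branch). Note that the paper itself does not prove this lemma but quotes it from \cite{Dp} (Lemma 3.1), and your argument follows essentially the same standard route one finds there: compare across the edge $\{v,u\}$ using $v\in S_c(T)$, then show the number of subtrees on the branch side strictly drops when the pendant path is slid from $y$ down to $z$.
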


\begin{figure}[!h]
\begin{center}
\begin{tikzpicture}[scale =.5]
\filldraw (0,0)node[above,outer sep=5pt]{1} circle [radius=1mm];
\filldraw (2,0)node[above,outer sep=5pt]{2} circle [radius=1mm];
\filldraw (4,0)node[above,outer sep=5pt]{3} circle [radius=1mm];
\filldraw (7,0)node[above,outer sep=5pt]{n-g-2} circle [radius=1mm];
\filldraw (9,0)node[above,outer sep=5pt]{n-g-1} circle [radius=1mm];
\filldraw (11,0)node[right,outer sep=5pt]{n-g} circle [radius=1mm];
\filldraw (9.6,2)node[above,outer sep=0pt]{n-g+1} circle [radius=1mm];
\filldraw (11,2.6)node[above,outer sep=0pt]{n-g+2} circle [radius=1mm];
\filldraw (9.6,-2)node[below,outer sep=0pt]{n} circle [radius=1mm];
\filldraw (11,-2.6)node[below,outer sep=0pt]{n-1} circle [radius=1mm];
\draw(0,0)--(2,0)--(4,0);
\draw (7,0)--(9,0)--(11,0);
\draw (9.6,2)--(11,0)--(11,2.5);
\draw (9.6,-2)--(11,0)--(11,-2.5);
\draw [dash pattern= on 2pt off 2pt](4,0)--(7,0);
\clip (11,-2.6) rectangle (14,3);
\draw [dash pattern=on 2pt off 2pt] (11,0) circle [radius=2.6cm];
\end{tikzpicture}
\end{center}
\caption{Path-star tree}\label{fig:2}
\end{figure}
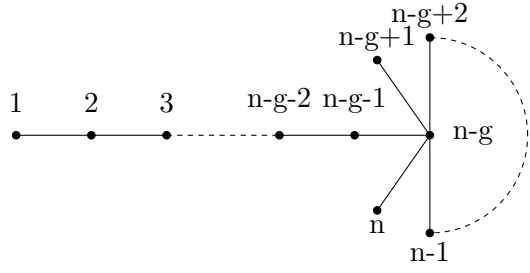

 A path-star tree  $P_{n-g,g}$ is the tree obtained by identifying the center of the star $K_{1,g}$ with a pendant vertex of the path $P_{n-g},\; 2\leq g\leq n-3$ (see Figure \ref{fig:2}). For more details on path-star trees, we refer \cite{Dp,P}. Path-star tree plays an important role in maximizing the pairwise distance of different centres of trees. The subtree core of  path-star trees are discussed in the next lemma.
 
 \begin{lemma}(\cite{Dp},Theorem 2.4)\label{SC-03}
The subtree core of the path-star tree $P_{n-g,g}$ is given by
\begin{equation*}
S_c(P_{n-g,g})=\begin{cases}
 \begin{cases}
 \{\frac{n-g+2^g}{2}\}, &\text{if $n-g$ is even}\\
 \{\frac{n-g-1+2^g}{2},\frac{n-g+1+2^g}{2}\}, &\text{if $n-g$ is odd.}
\end{cases}, & \textit{if $2^g+1 \leq n-g$},\\
\{n-g\}, & \textit{if $2^g+1 > n-g$}.
\end{cases}
\end{equation*}
 \end{lemma}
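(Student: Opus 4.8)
The plan is to set up explicit formulas for the number of subtrees containing each vertex of the path-star tree $P_{n-g,g}$, and then compare consecutive vertices to locate where this count is maximized. Label the path vertices $1,2,\ldots,n-g$ along the path, so that vertex $n-g$ is the one identified with the center of the star $K_{1,g}$, and let $v_j$ denote the path vertex at distance $n-g-j$ from the end (i.e. position $j$). For a vertex $v_j$ on the path, a subtree containing $v_j$ is determined by an independent choice of how far it extends toward vertex $1$ (there are $j$ choices: stop at $v_j,v_{j-1},\ldots,v_1$, equivalently include a prefix of length $0,1,\ldots,j-1$), together with a choice of a subtree of the "right part" (the path from $v_j$ to $v_{n-g}$ together with the star) that contains $v_j$. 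The right part, viewed from $v_j$, is itself a path of length $n-g-j$ capped by a star $K_{1,g}$ at its far end; the number of its subtrees containing the near end $v_j$ can be written down by the same prefix-counting idea, where reaching the star vertex $n-g$ contributes a factor $2^g$ for the free inclusion/exclusion of each of its $g$ pendant leaves. Assembling these gives a closed-form expression $f(v_j)$ as a product of the "left count" $j$ and a "right count" that is a sum of a geometric-type series plus a $2^g$ term; similarly one writes $f$ at the star center $n-g$ and at a pendant leaf.

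The key step is then the discrete comparison: compute $f(v_{j+1}) - f(v_j)$ (or the ratio), show it changes sign exactly once as $j$ runs from $1$ to $n-g$, and read off the maximizer. By Lemma \ref{SC-0} the function $f$ restricted to any path is strictly concave, so on the path $1,\ldots,n-g$ together with one star branch the maximum is attained at a single vertex or two adjacent vertices, which already tells us $S_c$ has the claimed shape; the work is to pin down \emph{which} vertex. I expect the sign of $f(v_{j+1})-f(v_j)$ to reduce, after simplification, to comparing $j$ against roughly $\tfrac{n-g+2^g}{2}$ — balancing the linear "left growth" against the "right decay plus the $2^g$ bonus from the star." When $2^g+1 \le n-g$ this balance point lies strictly inside the path, giving the two cases according to the parity of $n-g$ (even: a unique central vertex at position $\tfrac{n-g+2^g}{2}$; odd: two adjacent vertices straddling it). When $2^g + 1 > n-g$ the star bonus dominates throughout, $f$ is still increasing as we move toward the star at the last path step, and one checks separately that the star center $n-g$ beats each of its pendant leaves (immediate from Lemma \ref{SC-0}, or directly since a leaf sees strictly fewer subtrees than its neighbor of degree $\ge 3$), so $S_c = \{n-g\}$.

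The main obstacle will be carrying out the algebra cleanly: the right-count is not a pure geometric series because of the $2^g$ cap, so the difference $f(v_{j+1})-f(v_j)$ is a product of a linear factor and a mixed exponential-plus-constant factor, and one must verify it has a single sign change and correctly handle the boundary indices $j=1$ (the pendant end of the path) and $j = n-g-1, n-g$ (the transition into the star). A secondary point to be careful about is the edge case analysis at $2^g + 1 = n-g$ versus $2^g+1 > n-g$, and confirming that when $n-g$ is odd and small the formula $\{\tfrac{n-g-1+2^g}{2}, \tfrac{n-g+1+2^g}{2}\}$ still designates two genuine adjacent vertices of the tree rather than running off the path; the hypothesis $2^g + 1 \le n-g$ is exactly what guarantees this.
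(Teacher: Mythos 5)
Your plan is correct and is essentially the argument this lemma rests on (the paper itself only cites it from \cite{Dp}): the decomposition you describe gives exactly the count $f_{P_{n-g,g}}(j)=j\,(n-g-j+2^g)$ that the paper records as equation (\ref{eq20}), and maximizing this quadratic over $j\in\{1,\dots,n-g\}$, together with the easy check that the star center beats its pendant leaves and your observation that $2^g+1\le n-g$ keeps the maximizers on the path, yields precisely the stated case analysis. One small correction to your description of the algebra: no geometric series enters at all --- the ``right count'' is simply $(n-g-j)+2^g$, so the consecutive difference is $f(j+1)-f(j)=n-g+2^g-1-2j$, linear in $j$, and the single sign change near $\tfrac{n-g+2^g}{2}$ is immediate.
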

 
We now discuss some important results related to the study of the position of characteristic set in a tree. Let $v$ be a vertex of a tree $T.$ Let $T_1,T_2,\cdots, T_k$ be the connected components of $T - v$. For each such  component, let $\hat{L}(T_i), i=1,2,\cdots, k$ denote the principal submatrix of the Laplacian matrix $L$ corresponding to the vertices of $T_i$. Then  $\hat{L}(T_i) $ is invertible and $\hat{L}(T_i)^{-1} $ is a positive matrix which is called the bottleneck matrix for $T_i.$

By Perron-Frobenius Theorem, $\hat{L}(T_i)^{-1} $ has a simple dominant eigenvalue, called Perron value of $T_i$ at $v$. The component $T_j$ is called a Perron component at $v$ if its Perron value is maximal among the components $T_1, T_2,\cdots, T_k$, at $v$. The next result  describing the entries of  bottleneck matrices is very useful.

 \begin{lemma} (\cite{Kns},Proposition 1)\label{L-01}
 Let $T$ be a tree and let $v \in V(T).$ Let $T_1$ be a component of $T - v$ and  $L_1$ be the submatrix of $L(T)$ corresponding to $T_1.$ Then $L_1^{-1}=(m_{ij})$, where $m_{ij}$ is the number of edges in common between the paths $P_{iv}$ and $P_{jv},$ where $P_{iv}$ denotes the path joining $i$ and $v$.
\end{lemma}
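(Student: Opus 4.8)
The plan is to verify directly that $L_1 M = I$, where $M := (m_{ij})$ is the proposed inverse; this suffices because $L_1$ is already known to be invertible (as recalled just above). The natural way to organise the entrywise check is to expand a row of $L_1 M$ according to the neighbours of the corresponding vertex in $T$, and to control the intersection numbers $m_{ij}$ by a short recursion along the tree rooted towards $v$.

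First I would root $T_1$ at the unique vertex $u$ adjacent to $v$ (equivalently, root $T' := T_1 \cup \{v\}$ at $v$): for $i \in V(T_1)$ write $p(i)$ for its parent — allowing $p(u) = v$, with the convention $m_{vj} := 0$ since the path $P_{vv}$ is empty — and $c_1,\dots,c_t$ for its children in $T_1$. Since the neighbours of $i$ in $T$ are exactly $p(i), c_1,\dots,c_t$, we have $\deg_T(i) = t+1$ and $(L_1 M)_{ij} = (t+1)\,m_{ij} - m_{p(i)j} - \sum_{l=1}^t m_{c_l j}$ for every $i \in V(T_1)$. The two elementary facts to isolate are: (a) for every $i$, including $i = u$, the path $P_{p(i)v}$ is obtained from $P_{iv}$ by deleting the edge $e_i := \{i,p(i)\}$, whence $m_{ij} = m_{p(i)j} + \varepsilon_{ij}$ and $m_{c_l j} = m_{ij} + \varepsilon_{c_l j}$, where $\varepsilon_{ab}\in\{0,1\}$ indicates whether the edge $e_a$ lies on $P_{bv}$; and (b) $e_i$ lies on $P_{jv}$ precisely when $j$ lies in the subtree of $T'$ rooted at $i$, so that exactly one child edge $e_{c_l}$ lies on $P_{jv}$ when $j$ lies strictly below $i$, while none does when $j = i$ or $j$ is not below $i$ (for $i = u$ the edge $e_u = uv$ lies on $P_{jv}$ for every $j \in V(T_1)$, consistent with (b)).

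Substituting (a) into the row expansion, all the $m_{ij}$-terms cancel: $(L_1 M)_{ij} = (t+1)m_{ij} - (m_{ij} - \varepsilon_{ij}) - \sum_l (m_{ij} + \varepsilon_{c_l j}) = \varepsilon_{ij} - \sum_l \varepsilon_{c_l j}$. By (b) the right-hand side equals $1$ when $j = i$ and $0$ otherwise, i.e. $(L_1 M)_{ij} = \delta_{ij}$, so $M = L_1^{-1}$ as claimed.

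The only genuine work is the combinatorial bookkeeping behind (b) — that a downward path from a vertex to a descendant uses exactly one of that vertex's child edges — together with keeping the root $u$ and the external edge $uv$ straight, since $uv$ is not an edge of $T_1$ but is used by every path $P_{jv}$. An equivalent and slicker packaging is to write $L_1 = \tilde Q\tilde Q^\top$, where $\tilde Q$ is the incidence matrix of $T'$ with the row of $v$ deleted (a square matrix, invertible as the reduced incidence matrix of a tree), to observe that $M = P^\top P$ where $P$ is the edge--vertex path incidence matrix with $P_{e,i} = 1$ iff $e$ lies on $P_{iv}$ (so $(P^\top P)_{ij} = |P_{iv}\cap P_{jv}| = m_{ij}$), and to verify $\tilde Q P = I$; this last identity is once more exactly fact (b), re-expressed through signed incidences, and then $L_1 M = \tilde Q\tilde Q^\top P^\top P = \tilde Q (P\tilde Q)^\top P = \tilde Q P = I$.
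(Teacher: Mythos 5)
Your proposal is correct. Note, though, that the paper does not prove this statement at all: it is quoted as Lemma~\ref{L-01} directly from Kirkland--Neumann--Shader (\cite{Kns}, Proposition 1), so there is no in-paper argument to compare yours against; what you have supplied is a self-contained proof of the cited fact. The verification itself is sound: rooting $T_1\cup\{v\}$ at $v$, the row expansion $(L_1M)_{ij}=(t+1)m_{ij}-m_{p(i)j}-\sum_l m_{c_lj}$ is legitimate because every neighbour of $i\in V(T_1)$ in $T$ lies in $T_1\cup\{v\}$ (so the diagonal entry of $L_1$ is indeed $t+1$), and the convention $m_{vj}=0$ makes the case $i=u$ uniform since $P_{uv}$ is the single edge $uv$, which lies on every $P_{jv}$. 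Facts (a) and (b) are exactly right, the telescoping to $\varepsilon_{ij}-\sum_l\varepsilon_{c_lj}=\delta_{ij}$ is clean, and since $L_1$ and $M$ are square the one-sided identity $L_1M=I$ already forces $M=L_1^{-1}$ (so you do not even need to invoke invertibility of $L_1$ separately). The incidence-matrix repackaging also goes through, with the one caveat you implicitly acknowledge: $\tilde Q$ must be the \emph{signed} reduced incidence matrix (edges oriented, say, from child to parent) for both $L_1=\tilde Q\tilde Q^{\top}$ and $\tilde QP=I$ to hold, and the step $\tilde Q(P\tilde Q)^{\top}P=\tilde QP$ uses that a one-sided inverse of a square matrix is two-sided. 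This is essentially the classical argument behind the bottleneck-matrix formula, so nothing further is needed.
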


A connection between Perron components and characteristic set of a tree  is described in next three results.
\begin{theorem}(\cite{Kns},Corollary 1.1)\label{pt1}
Let $T$ be a tree on $n$ vertices. Then the edge $\{i,j\}$ is the characteristic edge of $T$ if and only if the component $T_i$ at vertex $j$ containing the vertex $i$ is the unique Perron  component at $j$ while the component $T_j$ at vertex $i$ containing the vertex $j$ is the unique Perron component at $i.$
\end{theorem}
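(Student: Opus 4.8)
The plan is to use Lemma~\ref{L-01} to translate the condition defining the characteristic set into a comparison of Perron values of bottleneck matrices. Fix the edge $e=\{i,j\}$ and let $T^{(i)},T^{(j)}$ be the two components of $T-e$, so that $T^{(i)}$ is precisely the component of $T-j$ containing $i$; by Lemma~\ref{L-01} every bottleneck matrix occurring below is entrywise at least $1$, since any two paths to the deleted vertex share the last edge into it. I will use two standard facts. First, the sign pattern of a Fiedler vector of a tree: if $e$ is the characteristic edge, then some Fiedler vector $Y$ satisfies $Y>0$ on all of $T^{(i)}$ and $Y<0$ on all of $T^{(j)}$. Second, restricting $L(T)Y=\mu Y$ to a component $C$ of $T-v$, with $c\in C$ the vertex of $C$ adjacent to $v$, gives
\begin{equation*}
(I-\mu M)\,Y|_{C}=Y(v)\,Me_{c},\qquad M:=\hat{L}(C)^{-1},
\end{equation*}
and when $C=T^{(i)}$, $v=j$ one has $Me_{i}=\mathbf{1}$, hence $(I-\mu M)\,Y|_{T^{(i)}}=Y(j)\mathbf{1}$.

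For the forward direction I would take $Y$ with the sign pattern above and pair the identity $(I-\mu M)\,Y|_{T^{(i)}}=-|Y(j)|\mathbf{1}$ with the positive Perron vector $u$ of $M$ (Perron value $\rho$): this yields $(1-\mu\rho)\,u^{T}Y|_{T^{(i)}}=-|Y(j)|\,u^{T}\mathbf{1}<0$ while $u^{T}Y|_{T^{(i)}}>0$, so $\rho(T^{(i)}\text{ at }j)>1/\mu$. For every other component $W$ of $T-j$ one has $W\subseteq T^{(j)}$, hence $Y|_{W}<0$, and the same identity with $Y(j)<0$ and the (positive) bottleneck matrix of $W$ gives, after pairing with its Perron vector, $\rho(W\text{ at }j)<1/\mu$. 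Thus $T^{(i)}$ is the unique Perron component at $j$, and the same argument with $i$ and $j$ exchanged handles vertex $i$.

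For the converse I would first establish a monotonicity lemma: along any path $v_{0}v_{1}\cdots v_{m}$, if $A_{t}$ ($t\ge 1$) denotes the component of $T-v_{t}$ containing $v_{0}$, then $\rho(A_{t}\text{ at }v_{t})$ is strictly increasing in $t$; this follows because, by Lemma~\ref{L-01}, the bottleneck matrix of $A_{t+1}$ at $v_{t+1}$ restricted to the vertices of $A_{t}$ equals $\mathbf{1}\mathbf{1}^{T}$ plus the bottleneck matrix of $A_{t}$ at $v_{t}$, and for entrywise positive matrices the Perron value strictly increases both when $\mathbf{1}\mathbf{1}^{T}$ is added and when one passes from a proper principal submatrix to the full matrix. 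Granting the Perron hypothesis on $e=\{i,j\}$, the monotonicity lemma together with the block structure of bottleneck matrices shows that any edge $e'\neq e$ fails the same hypothesis: by symmetry one may assume both endpoints of $e'$ lie in $T^{(j)}$, and writing them as $p,q$ with $q$ nearer to $e$ and $E_{j},E_{p}$ the components of $T-q$ containing $j$ and $p$, one has $\rho(E_{j}\text{ at }q)>\rho(E_{p}\text{ at }q)$ --- immediate from the hypothesis if $q\in\{i,j\}$, and otherwise obtained by chaining $\rho(E_{j}\text{ at }q)>\rho(T^{(i)}\text{ at }j)>\rho(W\text{ at }j)>\rho(E_{p}\text{ at }q)$, where $W$ is the component of $T-j$ containing $e'$. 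Hence $e$ is the only edge that can satisfy the hypothesis. Finally, the characteristic set is a vertex or an edge; if an edge, it satisfies the hypothesis by the forward direction, so it is $e$. If it were a vertex $v$, I would invoke the companion vertex characterisation from \cite{Kns} (a vertex is the characteristic vertex exactly when it has two or more Perron components, and otherwise the characteristic set lies in the unique Perron component at that vertex): applied at $i$, this contradicts $T^{(j)}$ being its \emph{unique} Perron component, since either $v=i$ has at least two Perron components, or $v\neq i$ and the unique Perron component at $i$ must contain $v\in T^{(i)}$, whereas $T^{(j)}\cap T^{(i)}=\emptyset$. Therefore $\chi(T)=e$.

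I expect the forward direction to be routine once the restricted eigenequation and the sign pattern are in place. The main obstacle is the converse, specifically proving the monotonicity lemma carefully and --- above all --- excluding a characteristic vertex; this is really the crux, and carrying it out without quoting the vertex version of the result in \cite{Kns} would require re-deriving, by the same Perron-value and bottleneck-matrix analysis, that at every vertex which is not itself the characteristic set the unique Perron component is the one pointing toward the characteristic set.
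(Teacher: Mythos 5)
The paper itself offers no proof of this statement: it is quoted verbatim from Kirkland--Neumann--Shader (\cite{Kns}, Corollary 1.1), as are the companion facts Theorems \ref{pt2} and \ref{pt3}. So your argument is an independent reconstruction, and on the whole it is a correct one, in the same spirit as the source (bottleneck matrices, Perron values, Fiedler sign structure). The forward direction is sound: the restricted eigen-equation $(I-\mu M)Y|_{C}=Y(v)Me_{c}$ is right, $Me_{c}=\mathbf{1}$ does follow from Lemma \ref{L-01} because every path from $C$ to the deleted vertex ends with the same edge, and pairing with the (symmetric, positive) bottleneck matrix's Perron vector correctly yields $\rho>1/\mu$ for the component across the characteristic edge and $\rho<1/\mu$ for all other components. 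Your monotonicity lemma is also correct, since the principal submatrix of the bottleneck matrix at $v_{t+1}$ indexed by $A_{t}$ is exactly $M_{t}+\mathbf{1}\mathbf{1}^{T}$, and with it the exclusion of every other edge from satisfying the Perron hypothesis goes through (the case $q=j$ being immediate from uniqueness of the Perron component at $j$).

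The one genuine gap is the final step ruling out a characteristic vertex $v$. As written, applying Theorems \ref{pt2}/\ref{pt3} at $i$ only produces a contradiction when $v=i$ or $v\in T^{(i)}$; your clause ``the unique Perron component at $i$ must contain $v\in T^{(i)}$'' silently assumes $v\in T^{(i)}$. If instead $v\in T^{(j)}\setminus\{j\}$, the unique Perron component at $i$ is $T^{(j)}$ and it \emph{does} contain $v$, so no contradiction arises at $i$; you must run the symmetric argument at $j$ (where the unique Perron component is $T^{(i)}$, and $v=j$ would force two Perron components at $j$, contradicting uniqueness). This is an easy repair, but the case split must be stated. Two further remarks: relying on \ref{pt2} and \ref{pt3} is legitimate within this paper, where they are independent quoted facts, though (as you note) it inverts the logical order of \cite{Kns}, so a self-contained proof would need to re-derive the vertex statements by the same eigen-identity analysis; and once \ref{pt2}/\ref{pt3} are granted, your monotonicity machinery becomes unnecessary --- applying \ref{pt3} at both $i$ and $j$ shows $\chi(T)$ cannot avoid $\{i,j\}$ since $T^{(i)}\cap T^{(j)}=\emptyset$, \ref{pt2} plus the uniqueness hypothesis excludes a characteristic vertex at $i$ or $j$, and your forward direction then forces the characteristic edge incident to, say, $i$ to cross into $T^{(j)}$, hence to be $\{i,j\}$.
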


\begin{theorem}(\cite{Kns},Corollary 2.1)\label{pt2}
 Let $T$ be a tree on $n$ vertices. Then the vertex $v$ is the characteristic vertex of $T$ if and only if there are two or more Perron components of $T$ at $v$.
\end{theorem}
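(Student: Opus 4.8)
The plan is to relate the entries of a Fiedler vector $Y$ of $T$ on the components of $T-v$ to the Perron values of their bottleneck matrices, using three ingredients: the eigenequation $LY=\mu Y$ read off on the vertices of a single component, the Perron--Frobenius theorem applied to the positive bottleneck matrices, and the characterisation of $\mu$ as the smallest Rayleigh quotient $z^{\top}Lz/z^{\top}z$ over nonzero $z\perp\mathbf 1$. Throughout, write $T_1,\dots,T_k$ for the components of $T-v$, let $M_i=\hat{L}(T_i)^{-1}$ be the bottleneck matrix of $T_i$ (positive, with Perron value $\rho_i$), and recall from Lemma~\ref{L-01} that the $(p,q)$ entry of $M_i$ counts the edges common to the $v$--$p$ and $v$--$q$ paths.

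For necessity, suppose $v\in\chi(T)$. Since the characteristic set of a tree is a single vertex or a single edge and here contains the vertex $v$, in fact $\chi(T)=\{v\}$, so $Y(v)=0$ for every Fiedler vector $Y$. Restricting $LY=\mu Y$ to the rows indexed by $V(T_i)$ and cancelling the vanishing contribution of $v$ gives $\hat{L}(T_i)Y_i=\mu Y_i$, where $Y_i=Y|_{V(T_i)}$; hence $1/\mu$ is an eigenvalue of $M_i$ whenever $Y_i\neq 0$. Because $\chi(T)=\{v\}$, the vector $Y$ cannot change sign inside any $T_i$ (a sign change would create a characteristic edge or a second characteristic vertex within $T_i$), so a nonzero $Y_i$ is a sign-definite eigenvector of the positive matrix $M_i$ and is therefore, by Perron--Frobenius, a Perron vector; thus $\rho_i=1/\mu$ on every component where $Y_i\neq 0$. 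Moreover $Y$ is sign-definite on each component, sums to $0$ over $V(T)$, and vanishes at $v$, so it is positive on at least one component and negative on at least one other: there are at least two components with $\rho_i=1/\mu$. Finally one must rule out $\rho_j>1/\mu$ for the remaining components; this I would do by a test-vector argument, forming from the Perron vector of $M_j$ (extended by $0$ off $T_j$ and corrected to be orthogonal to $\mathbf 1$) a vector whose Rayleigh quotient undercuts $\mu$, contradicting the minimality of $\mu$. Granting this, $v$ has two or more Perron components.

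For sufficiency, I prove the contrapositive: if $v\notin\chi(T)$ then $v$ has a \emph{unique} Perron component, namely the component $T_1$ of $T-v$ that meets the path from $v$ to $\chi(T)$. The engine is the monotonicity of bottleneck Perron values along an edge: if $\{a,b\}\in E(T)$ and $b$ is the endpoint closer to $\chi(T)$, then, by the path-counting description of Lemma~\ref{L-01}, the bottleneck matrix at $b$ of the side containing $a$ equals the bottleneck matrix at $a$ of that same side \emph{plus} a nonzero nonnegative matrix (every relevant path now also uses the edge $\{a,b\}$), so its Perron value is strictly larger; iterating toward $\chi(T)$ and calibrating against $1/\mu$ using the equalities established in the necessity part (and Theorem~\ref{pt1} when $\chi(T)$ is an edge) forces $T_1$ to strictly dominate every other component at $v$. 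Hence $v\notin\chi(T)$ yields a unique Perron component at $v$; contrapositively, two or more Perron components at $v$ force $v\in\chi(T)$, and since an endpoint of a characteristic edge cannot have two Perron components (Theorem~\ref{pt1}), $v$ must be \emph{the} characteristic vertex.

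The main obstacle is the quantitative calibration against $\mu$: both the ``remaining components have Perron value $\le 1/\mu$'' step in the necessity half and the ``$T_1$ strictly dominates'' step in the sufficiency half require more than the soft Perron--Frobenius monotonicity --- one needs to pin down that strict domination of the maximal Perron value fails \emph{exactly} on the characteristic set, and to handle cleanly the borderline case in which $\chi(T)$ is an edge and the two one-sided Perron values straddle $1/\mu$ without either endpoint seeing a second Perron component. The remaining pieces --- the componentwise eigenequation and the Rayleigh-quotient estimate --- are routine once Lemma~\ref{L-01} and the Perron--Frobenius theorem are available.
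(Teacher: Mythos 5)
This statement is quoted in the paper from Kirkland--Neumann--Shader (\cite{Kns}, Corollary 2.1); the paper supplies no proof of its own, so your sketch is measured against the standard argument. Your necessity half follows that standard route (restrict $LY=\mu Y$ to a component, use sign-definiteness plus Perron--Frobenius to get $\rho_i=1/\mu$ on at least two components), and is fine as far as it goes, but the step you defer --- that no component can have Perron value exceeding $1/\mu$ --- is not closed by the fix you hint at. If you extend the Perron vector of $M_j$ by zero and then ``correct'' it to be orthogonal to $\mathbf 1$, the numerator $z^{\top}Lz$ is unchanged (since $L\mathbf 1=0$) while the denominator $\|z\|^{2}$ shrinks, so the Rayleigh quotient goes \emph{up}, not down; this does not contradict the minimality of $\mu$. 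The correct routine repair is a two-component test vector: put a positive multiple of the Perron eigenvector of $\hat{L}(T_j)$ on $T_j$ and a negative multiple of the Perron eigenvector of $\hat{L}(T_1)$ on a component $T_1$ carrying the Fiedler vector, scale so the sum of entries is zero; then the quotient is a weighted average of $1/\rho_j<\mu$ and $1/\rho_1=\mu$, hence strictly below $\mu$, the desired contradiction.

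The sufficiency half is where the real content of the theorem lies, and as written you leave it open. The ``monotonicity of bottleneck Perron values along an edge'' is correct in spirit (by Lemma~\ref{L-01}, passing from vertex $a$ to its neighbour $b$ adds the edge $\{a,b\}$ to every pair of paths, so the relevant bottleneck matrix increases entrywise), but the ``calibration against $1/\mu$'' and the Type~II case where $\chi(T)$ is an edge are precisely the substance of \cite{Kns} Proposition 2 and Corollary 1.1, and you explicitly flag them as unresolved; in particular the induction needs a base case \emph{at} the characteristic set, which in the edge case requires the more delicate analysis of bottleneck matrices perturbed by a rank-one term, not just entrywise domination. Note that inside this paper you may simply invoke the quoted Theorem~\ref{pt3} (if $v$ is neither the characteristic vertex nor an endpoint of the characteristic edge, there is a \emph{unique} Perron component at $v$) together with Theorem~\ref{pt1} (an endpoint of the characteristic edge has a unique Perron component), which makes the contrapositive of sufficiency immediate; but a self-contained proof along your lines requires actually carrying out the calibrated induction, so as it stands the proposal has a genuine gap.
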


\begin{theorem} (\cite{Kns},Proposition 2)\label{pt3} 
Let $T$ be a tree. Then for any vertex $v$ that is neither a characteristic vertex nor an end vertex of the characteristic edge, the unique Perron component at $v$ contains the characteristic set of $T.$
\end{theorem}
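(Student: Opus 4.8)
The plan is to reduce the statement to an elementary fact about a deterministic walk on $T$, packaging all the spectral input into Theorems~\ref{pt1} and \ref{pt2}. Write $S=\chi(T)$ and let $S'\subseteq V(T)$ be the set of vertices of $S$ (a single vertex if $S$ is the characteristic vertex, the two adjacent endpoints if $S$ is the characteristic edge); the hypothesis on $v$ is precisely $v\notin S'$. By Theorem~\ref{pt2}, any $w\in W:=V(T)\setminus S'$ fails to be the characteristic vertex, hence has exactly one Perron component, which I denote $C(w)$; let $\sigma(w)$ be the unique neighbour of $w$ lying in $C(w)$, giving a map $\sigma\colon W\to V(T)$. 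Since $S$ is a vertex or an edge, it suffices to prove that $C(v)$ contains some vertex of $S'$: if $C(v)$ contains an endpoint $a$ of the characteristic edge, then the other endpoint $b$ is a neighbour of $a$ with $b\neq v$, so $b$ lies in the same component of $T-v$ as $a$, and hence $C(v)\supseteq S$.

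The key step is to show that $\sigma$ has no $2$-cycle inside $W$, i.e.\ there are no $u,u'\in W$ with $\sigma(u)=u'$ and $\sigma(u')=u$. Indeed, $\sigma(u)=u'$ says the component of $T-u$ containing $u'$ is the unique Perron component at $u$, while $\sigma(u')=u$ says the component of $T-u'$ containing $u$ is the unique Perron component at $u'$; by Theorem~\ref{pt1} this forces $\{u,u'\}$ to be the characteristic edge, contradicting $u,u'\in W$.

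Next I would run the orbit $w_0=v$, $w_{i+1}=\sigma(w_i)$, continuing as long as $w_i\in W$. If it never leaves $W$, it is an infinite walk in the finite tree $T$ and so repeats a vertex; taking the first repetition $w_i=w_j$ with $i<j$ minimal, the vertices $w_i,\dots,w_{j-1}$ are distinct, and $j-i\ge 2$ since $\sigma$ has no fixed point. If $j-i\ge 3$, then $w_i,\dots,w_{j-1}$ together with the walk edges and the edge $\{w_{j-1},w_i\}$ would form a cycle in $T$, which is impossible; hence $j-i=2$, i.e.\ $w_i=w_{i+2}$, so $\sigma(w_i)=w_{i+1}$ and $\sigma(w_{i+1})=w_i$, a $2$-cycle of $\sigma$, contradicting the previous paragraph. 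Therefore the orbit eventually reaches $S'$: there is a least $k\ge 1$ with $w_0,\dots,w_{k-1}\in W$ and $w_k:=\sigma(w_{k-1})\in S'$, and the same reasoning shows $w_0,\dots,w_k$ are pairwise distinct. A short induction now gives $w_i\in C(v)$ for $1\le i\le k$: $w_1=\sigma(w_0)\in C(v)$ by definition, and if $w_i\in C(v)$ with $i\le k-1$ then $w_{i+1}$ is a neighbour of $w_i$ with $w_{i+1}\neq v$, hence lies in the same component $C(v)$ of $T-v$. In particular $w_k\in C(v)\cap S'$, and by the reduction in the first paragraph $C(v)\supseteq S$, as required.

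The only step carrying genuine content is the no-$2$-cycle claim, and even that is immediate given Theorem~\ref{pt1}; everything else is bookkeeping about walks on a tree. If a self-contained argument were wanted, one could instead prove a monotonicity lemma from the bottleneck-matrix description of Lemma~\ref{L-01} — namely that if a component $C$ of $T-w$ at a neighbour $w'$ is \emph{not} a Perron component at $w$, then the component of $T-w'$ at $w$ is the \emph{unique} Perron component at $w'$, which comes out of the fact that adjoining the all-ones block forced by Lemma~\ref{L-01} strictly increases the Perron value of a nonnegative block-diagonal matrix — and then locate $\chi(T)$ by descent along the path from $v$; but the walk argument above is shorter and isolates the analysis entirely in the cited Theorem~\ref{pt1}.
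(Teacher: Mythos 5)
Your argument is correct, but note that the paper itself offers no proof of this statement: it is quoted from \cite{Kns} (Proposition 2), where the argument runs through bottleneck matrices --- using the entrywise description of Lemma \ref{L-01} and Perron--Frobenius monotonicity one shows that, at a vertex $v$ outside the characteristic set, the component pointing toward $\chi(T)$ has strictly dominant Perron value, hence is the unique Perron component and contains $\chi(T)$. Your route is genuinely different: you treat Theorems \ref{pt1} and \ref{pt2} as black boxes, define $\sigma$ sending each vertex outside $\chi(T)$ to the neighbour inside its unique Perron component, kill $2$-cycles of $\sigma$ by the ``if'' direction of Theorem \ref{pt1}, and then use acyclicity of $T$ to force the $\sigma$-orbit of $v$ to march into $\chi(T)$ while remaining inside $C(v)$. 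The details check out, including the small points the argument genuinely needs: $\sigma$ has no fixed point, the orbit vertices are pairwise distinct (so the orbit never returns to $v$ and stays in the component $C(v)$), and if $C(v)$ contains one endpoint of the characteristic edge it contains both, since $v$ is not an endpoint. As for what each approach buys: yours isolates all spectral input in the two cited characterizations and is purely combinatorial afterwards, which is elegant and short; the bottleneck-matrix proof of \cite{Kns} (essentially the monotonicity lemma you sketch at the end) gives the stronger quantitative statement that Perron values vary monotonically along paths toward the characteristic set, which is the form of reasoning this paper actually reuses when it combines Theorem \ref{pt3} with $\ll$-comparisons of bottleneck matrices, e.g.\ in the proof of Theorem \ref{th-01}.
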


The following two results are related to the study of the position of characteristic set in some path-star trees and are useful to prove our main result.
 
 \begin{lemma}\label{CS-01}(\cite{P},Lemma 2.2)
  The  characteristic set of $P_{n-2,2}$ is given by
 \begin{equation*}
 \chi(P_{n-2,2})=\begin{cases}
 \{\frac{n}{2},\frac{n}{2}+1\}, &\textit{if $n$ is even},\\
  \{\frac{n-1}{2},\frac{n+1}{2}\}, &\textit{if $n$ is odd}.
 \end{cases}
 \end{equation*}
\end{lemma}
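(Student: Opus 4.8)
The plan is to pin down $\chi(T)$ for $T=P_{n-2,2}$ via Theorem~\ref{pt1}, which identifies the characteristic edge through Perron components, together with Lemma~\ref{L-01}, which describes the relevant bottleneck matrices explicitly. Label the vertices so that $1,2,\dots,n-2$ is the spine and $n-1,n$ are the pendant vertices attached at $n-2$, and put $k=\lfloor n/2\rfloor$; one checks $2\le k\le n-3$ for every $n\ge 5$, so $\{k,k+1\}$ is a spine edge, and I claim it is the characteristic edge. If $B$ is a component of $T-v$ that is a path on $\ell$ vertices joined to $v$ at one of its ends, Lemma~\ref{L-01} gives its bottleneck matrix as $\big(\min\{i,j\}\big)_{i,j=1}^{\ell}$; write $p(\ell)$ for its Perron value. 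If $B$ is a ``broom'' on $m\ge 3$ vertices --- a spine of $m-2$ vertices with two extra pendant vertices at one end --- joined to $v$ at the far end of its spine, then, indexing the two leaves last, Lemma~\ref{L-01} gives a bottleneck matrix that coincides with $\big(\min\{i,j\}\big)_{i,j=1}^{m}$ except that the entries in positions $(m-1,m)$, $(m,m-1)$, $(m,m)$ equal $m-2,m-2,m-1$ rather than $m-1,m-1,m$; write $\beta(m)$ for its Perron value. Removing $k+1$ from $T$ leaves a path on $k$ vertices together with a broom on $n-k-1$ vertices (or, when $k+1=n-2$, two isolated vertices in place of the broom), while removing $k$ leaves a path on $k-1$ vertices together with a broom on $n-k$ vertices. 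So, by Theorem~\ref{pt1}, it suffices to prove
\[
p(k)>\beta(n-k-1)\quad\text{and}\quad\beta(n-k)>p(k-1),
\]
where in the exceptional case $k+1=n-2$ the first inequality is replaced by $p(k)>1$.

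Everything then rests on two comparisons: \emph{(a)} $p$ is strictly increasing; \emph{(b)} $p(\ell-1)<\beta(\ell)<p(\ell)$ for all $\ell\ge 3$. For (a), $\big(\min\{i,j\}\big)_{i,j=1}^{\ell-1}$ is a proper principal submatrix of the positive (hence irreducible) matrix $\big(\min\{i,j\}\big)_{i,j=1}^{\ell}$, and the spectral radius of an irreducible nonnegative matrix strictly exceeds that of any proper principal submatrix. For the right inequality in (b), the broom's bottleneck matrix is entrywise at most $\big(\min\{i,j\}\big)_{i,j=1}^{\ell}$, strictly smaller in the three positions noted above, so --- the dominating matrix being irreducible --- its Perron value is strictly smaller: $\beta(\ell)<p(\ell)$. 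For the left inequality, deleting the row and column of one of the two leaves from the broom's bottleneck matrix returns exactly $\big(\min\{i,j\}\big)_{i,j=1}^{\ell-1}$, so again by the strict submatrix bound $\beta(\ell)>p(\ell-1)$. The bottleneck matrices occurring here are positive, since in any component of $T-v$ every vertex reaches $v$ through the unique edge at $v$, so irreducibility is automatic.

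Granting (a) and (b), the two required inequalities follow by arithmetic. Since $k=\lfloor n/2\rfloor\ge (n-1)/2$ we get $k\ge n-k-1$, hence $p(k)\ge p(n-k-1)>\beta(n-k-1)$ using (a) and the right half of (b); and when $k+1=n-2$ (which happens only for $n=5,6$) the two far pieces are isolated vertices of Perron value $1<p(2)\le p(k)$. Since $n-k=\lceil n/2\rceil\ge k$ we get $n-k-1\ge k-1$, hence $\beta(n-k)>p(n-k-1)\ge p(k-1)$ using the left half of (b) and (a). Thus both conditions of Theorem~\ref{pt1} hold for $\{k,k+1\}$, and since the characteristic set of a tree is unique it equals $\{k,k+1\}$, which is $\{n/2,\,n/2+1\}$ when $n$ is even and $\{(n-1)/2,\,(n+1)/2\}$ when $n$ is odd. (The choice $k=\lfloor n/2\rfloor$ is in fact forced: the argument wants $k\ge n-k-1$ and $n-k\ge k$, i.e.\ $(n-1)/2\le k\le n/2$, which leaves only $k=\lfloor n/2\rfloor$.) I expect the only genuine work to be part (b) --- correctly reading off the form of the broom's bottleneck matrix from Lemma~\ref{L-01} and then invoking the two strict Perron--Frobenius comparison principles with the right strictness; the small cases $n=5,6$, where $k+1$ coincides with the branch vertex $n-2$, are a routine hand check.
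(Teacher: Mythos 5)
Your argument is correct: the identification of the broom's bottleneck matrix from Lemma \ref{L-01}, the three Perron--Frobenius comparisons (monotonicity of $p$, $\beta(\ell)<p(\ell)$ by entrywise domination, $\beta(\ell)>p(\ell-1)$ by the principal-submatrix bound), and the arithmetic with $k=\lfloor n/2\rfloor$ all check out, including the degenerate cases $n=5,6$ where $k+1$ is the branch vertex. The paper itself gives no proof of this lemma (it is quoted from \cite{P}, Lemma 2.2), and your route via Theorem \ref{pt1} and bottleneck-matrix comparisons is exactly the Perron-component machinery the paper (and the cited source) uses, so this is essentially the standard proof.
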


 \begin{lemma} (\cite{P},Proposition 3.1, 3.2, 3.3 and 3.4)\label{CS-02}
 	\begin{itemize}
 		\item[1.] Let $\{i,i+1\}$ be the characteristic edge of $P_{n-g,g}$ where $g\geq 3$ and $2 \leq i\leq n-g-1.$ Then the characteristic set of $P_{n-g+1,g-1}$ lies between the vertices $i$ and $i+2$ with neither $i$ nor $i+2$ as a characteristic vertex.
 		
 		\item[2.] Let $i$ be  the characteristic vertex of $P_{n-g,g},$ where $g\geq3.$ Then $\{i,i+1\}$ is the characteristic edge of $P_{n-g+1,g-1}.$
 		
 		\item [3.] Let $\{i,i+1\}$ be the characteristic edge of $P_{n-g,g}$ where $g\leq n-4$. Then the characteristic set of $P_{n-g-1,g+1}$ lies between the vertices $i-1$ and $i+1$ with neither $i-1$ nor $i+1$ as a characteristic vertex.
 
  		 \item [4.] Let $i$ be  the characteristic vertex of $P_{n-g,g},$ where $g\leq n-4.$ Then $\{i-1,i\}$ is the characteristic edge of $P_{n-g-1,g+1}.$
 
\end{itemize}

 \end{lemma}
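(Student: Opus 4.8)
\textit{Proof proposal.} The plan is to argue entirely in terms of Perron components: Lemma~\ref{L-01} lets us read off the bottleneck matrices of the components of a path-star tree at a trunk vertex, and Theorems~\ref{pt1}--\ref{pt3} convert comparisons of Perron values into statements about the characteristic set. Label the trunk of $P_{n-g,g}$ by $1,2,\dots,n-g$ (the $g$ pendants attached at $n-g$) and fix an interior trunk vertex $i$, writing $a:=n-g-i$. Deleting $i$ leaves the \emph{backward} component, the path on $\{1,\dots,i-1\}$, and the \emph{forward} component on $\{i+1,\dots,n-g\}\cup\{\text{pendants}\}$. By Lemma~\ref{L-01} the bottleneck matrix of a path on $k$ vertices rooted at an end is $N_k=(\min\{s,t\})_{s,t=1}^{k}$, and that of a component consisting of a path on $a\ge 1$ vertices attached at the root with $\gamma$ pendants at the far end is the matrix $M(a,\gamma)$ whose path--path block is $(\min\{s,t\})$, whose path--pendant entries in row $s$ all equal $s$, and whose pendant--pendant block is $aJ_\gamma+I_\gamma$ (for $a=0$ the forward component is $\gamma$ isolated vertices, of Perron value $1$). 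I will use three facts about the Perron value $\rho(\cdot)$ of an entrywise positive matrix: it is strictly monotone under entrywise domination; a proper principal submatrix has strictly smaller Perron value; and --- the one real computation --- $M(a,\gamma)\le M(a+1,\gamma-1)$ entrywise, strictly once $\gamma\ge 2$. The last is checked from Lemma~\ref{L-01} under the relabeling matching the $a$ path vertices and first $\gamma-1$ pendants of $M(a+1,\gamma-1)$ to those of $M(a,\gamma)$ and the new $(a{+}1)$-st path vertex to the last pendant of $M(a,\gamma)$: all path--path and path--pendant common-edge counts are unchanged, while each pendant--pendant common-edge count increases by one. In words: pushing the pendant bundle one edge away from $i$ strictly increases the forward Perron value, while $N_k$ is untouched; pulling it one edge toward $i$ does the reverse.

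For part~2, let $i$ be the characteristic vertex of $P_{n-g,g}$ with $g\ge 3$. Since a pendant lies in a single component and the star centre cannot meet the two-Perron-component condition in the range $2\le g\le n-3$, the characteristic set lies on the trunk, so $i$ is an interior trunk vertex with $i\ge 3$, and Theorem~\ref{pt2} gives $\rho(N_{i-1})=\rho(M(a,g))$. Pass to $P_{n-g+1,g-1}$, which is $P_{n-g,g}$ with the bundle pushed one edge out. At $i$ the backward component is still $N_{i-1}$, the forward one is $M(a+1,g-1)$, and $\rho(M(a+1,g-1))>\rho(M(a,g))=\rho(N_{i-1})$, so the forward component is the unique Perron component at $i$. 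At $i+1$ the backward component is $N_i$ and the forward one is $M(a,g-1)$, a proper principal submatrix of $M(a,g)$, so $\rho(M(a,g-1))<\rho(M(a,g))=\rho(N_{i-1})<\rho(N_i)$ and $N_i$ is the unique Perron component at $i+1$; by Theorem~\ref{pt1}, $\{i,i+1\}$ is the characteristic edge of $P_{n-g+1,g-1}$. Part~4 is the mirror image: in $P_{n-g-1,g+1}$ the forward component at $i$ is $M(a-1,g+1)$ and $\rho(M(a-1,g+1))<\rho(M(a,g))=\rho(N_{i-1})$, so $N_{i-1}$ dominates at $i$; at $i-1$ the forward component is $M(a,g+1)\supsetneq M(a,g)$ and $\rho(N_{i-2})<\rho(N_{i-1})=\rho(M(a,g))<\rho(M(a,g+1))$, so the forward component dominates at $i-1$; Theorem~\ref{pt1} gives that $\{i-1,i\}$ is the characteristic edge of $P_{n-g-1,g+1}$.

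For the edge cases, suppose $\{i,i+1\}$ is the characteristic edge of $P_{n-g,g}$ with $g\ge 3$, $2\le i\le n-g-1$; by Theorem~\ref{pt1} this means $\rho(M(a,g))>\rho(N_{i-1})$ and $\rho(N_i)>\rho(M(a-1,g))$ (the latter read as $\rho(N_i)>1$ when $i+1=n-g$). For part~1, in $P_{n-g+1,g-1}$ the forward component is the unique Perron component at $i$, since $\rho(M(a+1,g-1))>\rho(M(a,g))>\rho(N_{i-1})$, and the backward component $N_{i+1}$ is the unique Perron component at $i+2$, since $\rho(N_{i+1})>\rho(N_i)>\rho(M(a-1,g))\ge\rho(M(a-1,g-1))$ (the forward component at $i+2$ being $M(a-1,g-1)$, or $g-1$ isolated vertices when $a=1$). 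Theorems~\ref{pt2}--\ref{pt3} then finish: neither $i$ nor $i+2$ is a characteristic vertex, and any characteristic vertex, or non-incident endpoint of a characteristic edge, lies in the unique Perron component at $i$ --- hence among $\{i+1,i+2,\dots\}$ --- and in the unique Perron component at $i+2$ --- hence among $\{1,\dots,i+1\}$ --- so the characteristic set of $P_{n-g+1,g-1}$ must be $\{i+1\}$, $\{i,i+1\}$, or $\{i+1,i+2\}$, i.e.\ it lies strictly between $i$ and $i+2$. Part~3 is identical for $P_{n-g-1,g+1}$: one shows the backward component is the unique Perron component at $i+1$ (because $\rho(N_i)>\rho(M(a-1,g))>\rho(M(a-2,g+1))$) and the forward component is the unique Perron component at $i-1$ (because $\rho(M(a,g+1))>\rho(M(a,g))>\rho(N_{i-1})>\rho(N_{i-2})$), and Theorems~\ref{pt2}--\ref{pt3} confine the characteristic set to $\{i-1,i\}$, $\{i\}$, or $\{i,i+1\}$.

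The only genuine computation is the entrywise inequality $M(a,\gamma)\le M(a+1,\gamma-1)$ together with its strictness for $\gamma\ge 2$; everything else is Perron-value monotonicity plus bookkeeping. The part demanding care is exactly that bookkeeping: identifying, for each of $P_{n-g+1,g-1}$ and $P_{n-g-1,g+1}$ and each relevant vertex, which components occur and which of $N_k$, $M(a,\gamma)$, or a bundle of isolated vertices each of them is; and the degenerate situations --- $i$ at an end of its range (so a backward component is empty or a singleton, forcing $i\ge 3$ for characteristic vertices), a vertex $i+1$, $i+2$ or $i-1$ coinciding with a star centre (so the ``forward component'' is a set of singletons of Perron value $1$), and $g$ at the ends of the admissible interval --- where $M(\cdot,\cdot)$ collapses and one must argue with the Perron value $1$ directly. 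One also needs, once, the remark used above that the characteristic set of any admissible path-star tree lies on the trunk.
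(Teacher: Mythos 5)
The paper itself gives no proof of Lemma~\ref{CS-02}: it is quoted from \cite{P} (Propositions 3.1--3.4), so there is nothing in-paper to compare against. Your route --- reading bottleneck matrices off Lemma~\ref{L-01}, proving the entrywise domination $M(a,\gamma)\ll M(a+1,\gamma-1)$ for $\gamma\ge 2$, and converting Perron-value comparisons at the vertices $i-1,\,i,\,i+1,\,i+2$ into statements about the characteristic set via Theorems~\ref{pt1}--\ref{pt3} --- is exactly the machinery this paper assembles in Section~2 (and, by all indications, the same kind of argument as in the cited source). The main chains of inequalities are correct; the only imprecision in the key computation is the claim that \emph{every} pendant--pendant entry increases: the diagonal entry of the pendant matched to the new trunk vertex stays equal, but strictness still holds for $\gamma\ge 2$ from the remaining pendant entries, so nothing is lost. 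Your reductions of parts 2 and 4, and the confinement argument for parts 1 and 3 via Theorem~\ref{pt3} at the two flanking vertices, are sound, including the observation that the characteristic set of an admissible path-star tree must lie on the trunk.

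The one place where the written argument does not literally apply is the degenerate case of part 3 with $i=n-g-1$, i.e.\ $a=1$, which genuinely occurs (e.g.\ $P_{4,2}\to P_{3,3}$ for $n=6$, where $\chi(P_{4,2})=\{3,4\}$ by Lemma~\ref{CS-01}): in $P_{n-g-1,g+1}$ the vertex $i+1$ is then a pendant, so the chain $\rho(N_i)>\rho(M(a-1,g))>\rho(M(a-2,g+1))$ is meaningless ($M(-1,g+1)$ is undefined), and ``the backward component is the unique Perron component at $i+1$'' carries no information, since a pendant vertex has a single component and Theorem~\ref{pt3} there localizes nothing. Your closing paragraph acknowledges that degenerate configurations must be argued with Perron value $1$ directly, but this particular one (where $i$ itself becomes the star centre and $i+1$ a pendant) is not among those you list, and it is the step that needs writing out: at $i$ the components of $P_{n-g-1,g+1}$ are $N_{i-1}$ and $g+1$ singletons, and $\rho(N_{i-1})>1$ because $g\le n-4$ forces $i=n-g-1\ge 3$, so $N_{i-1}$ is the unique Perron component at $i$; combined with your analysis at $i-1$ (forward component $M(a,g+1)$ dominates) and the fact that a pendant edge cannot be characteristic when $g+1\ge 2$, this confines $\chi(P_{n-g-1,g+1})$ to $\{i-1,i\}$ or $\{i\}$, as required. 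With that case added, the proof is complete.
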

 \begin{lemma} \label{L-03}
For the path-star tree $P_{n-2,2}$, $d_{P_{n-2,2}}(S_c,\chi)=0.$
\end{lemma}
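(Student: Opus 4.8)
The plan is to obtain both $S_c(P_{n-2,2})$ and $\chi(P_{n-2,2})$ explicitly---specializing Lemma \ref{SC-03} with $g=2$ for the subtree core and invoking Lemma \ref{CS-01} for the characteristic set---and then to check, case by case on the parity of $n$, that the two sets share a vertex. Since the distance between two subsets of $V(T)$ is the minimum of the pairwise vertex distances (and $\chi(P_{n-2,2})$ is here always an edge, i.e.\ the pair of its end vertices), a common vertex forces $d_{P_{n-2,2}}(S_c,\chi)=0$.

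First I would note that $P_{n-2,2}$ is defined only for $n\ge 5$, and split the analysis according to whether $2^{2}+1=5\le n-2$ (equivalently $n\ge 7$), since Lemma \ref{SC-03} describes $S_c$ differently in the two ranges. For $n\ge 7$ with $n$ even, Lemma \ref{SC-03} gives $S_c=\{\tfrac{n+2}{2}\}$ and Lemma \ref{CS-01} gives $\chi=\{\tfrac{n}{2},\tfrac{n}{2}+1\}$, so $\tfrac{n+2}{2}=\tfrac{n}{2}+1\in S_c\cap\chi$. For $n\ge 7$ with $n$ odd, $S_c=\{\tfrac{n+1}{2},\tfrac{n+3}{2}\}$ and $\chi=\{\tfrac{n-1}{2},\tfrac{n+1}{2}\}$, so $\tfrac{n+1}{2}\in S_c\cap\chi$. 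For the remaining values $n=5$ and $n=6$, Lemma \ref{SC-03} gives $S_c=\{n-2\}$, while Lemma \ref{CS-01} yields $\chi(P_{3,2})=\{2,3\}$ and $\chi(P_{4,2})=\{3,4\}$, so again $n-2\in\chi$. In every case $S_c\cap\chi\ne\varnothing$, which proves the lemma.

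Since the whole argument is substitution into two already-established lemmas, I do not anticipate any genuine difficulty; the only points needing care are keeping the vertex labelling consistent with Figure \ref{fig:2}, so that the indices produced by the two lemmas refer to the same vertices, and not overlooking the small-$n$ branch $S_c=\{n-2\}$ of Lemma \ref{SC-03}.
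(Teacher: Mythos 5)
Your proposal is correct and follows essentially the same route as the paper: specialize Lemma \ref{SC-03} with $g=2$ (including the small-$n$ branch $S_c=\{n-2\}$ for $n<7$), compare with Lemma \ref{CS-01}, and observe that the two sets always share a vertex. Your write-up is just slightly more explicit about the parity and small-$n$ cases than the paper's.
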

\begin{proof}
By  Lemma \ref{SC-03}, if $n<7$ then $S_c(P_{n-2,2})=\{n-2\}$. For $n\geq 7$, 
\begin{equation*}
 S_c(P_{n-2,2})=\begin{cases}
 \{\frac{n}{2}+1\}, &\textit{if $n$ is even},\\
  \{\frac{n+1}{2},\frac{n+3}{2}\}, &\textit{if $n$ is odd}.
 \end{cases}
 \end{equation*}
 
 Hence by Lemma \ref{CS-01}, $d_{P_{n-2,2}}(S_c,\chi)=0.$
\end{proof}

 \section{Distance between characteristic set and the subtree core}

For a real square matrix $A$, we denote the spectral radius of $A$ by $\rho(A).$ For non negative square matrices $A$ and $B$ with order of $B$ is greater or equal to order of $A$ , by the notation $A\ll B$ we mean that there exists permutation matrices $P$ and $Q$ such that $P^TAP$ is entry wise dominated by a principal submatrix of $Q^TBQ$, with strict inequality in at least one place, in case $A$ and $B$ have same order. A useful fact from the Perron-Frobenius theory is that if $B$ is irreducible and $A\ll B$ then $\rho(A)<\rho(B).$ We will now prove the main result of this section.

\begin{theorem}\label{th-01}
Among all trees on $n\geq 5$ vertices the distance between the subtree core and the characteristic set is maximized by some path-star tree.
\end{theorem}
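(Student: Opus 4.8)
The plan is to start from an arbitrary tree $T$ on $n$ vertices and show that one can transform it, through a sequence of operations each of which does not decrease $d(S_c,\chi)$, into a path-star tree. The natural strategy is a two-stage reduction. In the first stage I would fix the position of the subtree core and push all of $T$ toward the ``characteristic'' end until $T$ becomes a caterpillar whose spine is a path with a star attached at one end — exactly the shape of $P_{n-g,g}$. In the second stage I would argue, among all path-star trees on $n$ vertices, which one actually attains the maximum; but the theorem only claims the extremizing tree is \emph{some} path-star tree, so the first stage is the real content.

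For the first stage I would use the three subtree-core lemmas (Lemma~\ref{SC-0}, Lemma~\ref{SC-01}, Lemma~\ref{SC-02}) together with the Perron-component machinery (Theorems~\ref{pt1}, \ref{pt2}, \ref{pt3} and Lemma~\ref{L-01}). The idea: let $v\in S_c(T)$ and let $\chi(T)$ lie in some branch $B_0$ at $v$ (by Theorem~\ref{pt3}, $\chi(T)$ sits inside the unique Perron component at $v$, unless $v$ is itself a characteristic vertex, which forces $d(S_c,\chi)=0$ and is handled trivially). Any pendant vertex lying in a branch at $v$ \emph{other} than $B_0$ can be moved by Lemma~\ref{SC-01} so that it becomes pendant at $v$: this keeps (in fact pins down) the subtree core at $v$, and since we are only deleting mass from non-Perron branches and reattaching it at $v$, one checks via the bottleneck-matrix description (Lemma~\ref{L-01}) and the $A\ll B$ comparison that the Perron value of $B_0$ at $v$ does not decrease relative to the others, so $\chi$ does not move toward $v$; hence $d(S_c,\chi)$ does not decrease. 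Iterating, all branches at $v$ except $B_0$ (and possibly a pendant star we are building up at $v$) get absorbed. Then, \emph{inside} the branch $B_0$, whenever $B_0$ is not a path I would apply Lemma~\ref{SC-02} to slide a pendant path from an internal high-degree vertex out toward the far end $z$; again this keeps $v$ as a subtree-core vertex and lengthens (or at least does not shorten) the path separating $v$ from $\chi$, while a monotonicity argument on Perron values shows $\chi$ stays at least as far from $v$. Repeating until $B_0$ is a bare path, $T$ has become a path with a bunch of pendant edges hanging off one vertex — i.e.\ a path-star tree $P_{n-g,g}$ — and throughout $d(S_c,\chi)$ has only increased.

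The main obstacle, and the step I expect to need the most care, is controlling what happens to $\chi(T)$ under these moves. The subtree-core side is clean — Lemmas~\ref{SC-01} and \ref{SC-02} are tailored to it — but there is no comparably packaged statement saying ``$\chi$ does not slide toward $S_c$'' under an arbitrary rearrangement. I would have to prove, for each move, a small lemma of the form: if we detach a subtree hanging off a vertex on the $v$-side of $\chi$ and reattach it strictly closer to $v$ (or at $v$), then in the resulting tree the Perron component on the far side of the old characteristic edge still dominates at every vertex up to and including the old $\chi$, so by Theorems~\ref{pt1}–\ref{pt3} the new characteristic set is no closer to $v$. This is exactly the kind of statement one proves with the $A\ll B \Rightarrow \rho(A)<\rho(B)$ principle applied to bottleneck matrices, using Lemma~\ref{L-01} to read off entries as counts of shared path-edges; the bookkeeping is where the work lies. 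A secondary subtlety is the boundary case where $S_c(T)$ or $\chi(T)$ is a two-vertex set (an edge): Proposition~\ref{prop:cnt}, Proposition~\ref{SC-00}, and the convention of treating a characteristic edge as two adjacent vertices let me reduce to the single-vertex analysis without real loss, but it should be stated explicitly. Once the reduction to path-star trees is complete, the theorem is proved; pinning down the exact optimal $g$ (and the asymptotics) is deferred to the subsequent results, consistent with the statement as given.
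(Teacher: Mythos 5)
Your proposal follows essentially the same route as the paper: collapse every branch at the subtree-core vertex away from $\chi$ into pendant vertices at that vertex (Lemma \ref{SC-01}), then straighten the branch containing $\chi$ into a path (Lemma \ref{SC-02}), controlling $\chi$ at each step by comparing bottleneck matrices (Lemma \ref{L-01} together with $A\ll B\Rightarrow\rho(A)<\rho(B)$) at the characteristic-set endpoint nearest the core, so that the far-side component remains the unique Perron component and Theorems \ref{pt1}--\ref{pt3} force $\chi$ to stay put or move away. The ``small lemma'' you defer is exactly the short bottleneck-matrix argument the paper carries out, so your outline is correct and matches the paper's proof.
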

\begin{proof}
Let $T$ be a tree on $n$ vertices. Our aim is to construct a path-star tree $P_{n-g,g}$ such that $d_{P_{n-g,g}}(S_c,\chi)\geq d_T(S_c,\chi).$ By Lemma \ref{L-03}, $d_{P_{n-2,2}}(S_c,\chi)=0,$ so we assume that $d_T(S_c,\chi)\geq 1.$

As the subtree core of a tree consists of either a vertex or two adjacent vertices and the characteristic set of a tree consists of a vertex or an edge (two adjacent vertices), so we need to consider four cases. Here, we prove the case when subtree core consists of two adjacent vertices and the characteristic set consists of an edge. The proofs of the other cases are similar.

Let $\chi(T)=\{u_1,v_1\}$  and $S_c(T)=\{u_2,v_2\}$. Also suppose that $d_T(S_c,\chi)= d(v_1,u_2).$ Let $C_1,C_2,\ldots,C_k$ be the components of $T - v_2$ where $C_1$ is the component containing $u_2.$   If $|V(C_i)|=1$ for $2\leq i\leq k$  then rename the tree $T$ as $\tilde{T}.$ Otherwise,  let $C\equiv\cup_{i=2}^kC_i$ and $|V(C)|=s$. Construct a new tree $\tilde{T}$ from $T$ by removing $C$ and adding $s$ pendant vertices at $v_2.$ By Lemma \ref{SC-01}, $S_c(\tilde{T})=\{v_2\}.$ we will now check the effect of this perturbation on the distance between characteristic set and subtree core in $\tilde{T}$. For that we will obtain $\tilde{T}$ from $T$ little differently. In $T$, at $v_1$ let $T_1$ be the component containing $u_1$ and $T_2$ be the component containing $u_2$. By Theorem \ref{pt1}, $T_1$ is the only Perron component at $v_1$ in $T.$ In $T$ at $v_1$, replace the component  $T_2$ by another component $\tilde{T_2}$, where  $\tilde{T_2}$ is obtained from $T$ by removing $C$ and adding $s$ pendant vertices at $v_2.$  The new tree is $\tilde{T}$ and by Lemma \ref{L-01}, $\hat{L}(\tilde{T_2})^{-1} \ll \hat{L}(T_2)^{-1} .$ So, $\rho(\hat{L}(\tilde{T_2})^{-1}) <\rho(\hat{L}(T_2)^{-1}) $ and hence in $\tilde{T}$ at $v_1$,  $T_1$ is the only Perron component. By Theorem \ref{pt3},  the characteristic set of $\tilde{T}$ is either $\{u_1,v_1\}$ or moves away from $v_2.$ So $d_{\tilde{T}}(S_c,\chi)\geq d_T(S_c,\chi).$

If $\tilde{T}$ is a path-star tree, then the result follows. Suppose $\tilde{T}$ is not a path-star tree. In $\tilde{T}$, let $v_3$ be the vertex which is either the characteristic vertex or an end point of the characteristic edge $\{u_3,v_3\}$ which is nearer to the subtree core $v_2.$  Let $A_1,A_2,\ldots,A_p$ be the connected components of $\tilde{T} - v_3$ with $A_1$ as the component containing the subtree core. If $p=2$ and $A_2$ is a path then rename the tree $\tilde{T}$ as $\hat{T}.$ Otherwise, let $\tilde{C}=\cup_{i=2}^p A_i$. Construct a new tree $\hat{T}$ from $\tilde{T}$ by replacing $\tilde{C}$ with a path $P$ on $|\tilde{C}|$ vertices. Then by Lemma \ref{L-01}, $\hat{L}(\tilde{C})^{-1} \ll \hat{L}(P)^{-1} .$  So, $\rho(\hat{L}(\tilde{C})^{-1}) <\rho(\hat{L}(P)^{-1}) $ and hence in $\hat{T}$ at $v_3$,  the component not containing $v_2$ is the only Perron component. By Theorem \ref{pt3},  the characteristic set of $\hat{T}$ is either $\{u_3,v_3\}$ or moves away from $v_2.$ The tree $\hat{T}$ can also be obtained from $\tilde{T}$ by following the perturbation mentioned in Lemma \ref{SC-02}. Hence by Lemma \ref{SC-02}, $S_c(\hat{T})=\{v_2\}.$ So $d_{\hat{T}}(S_c,\chi)\geq d_{\tilde{T}}(S_c,\chi).$

If $\hat{T}$ is a path-star tree, then the result follows. Otherwise, $\hat{T}$ has three parts. The first part is the path from vertex $1$ to $u_3$, second is a tree $T'$ containing $v_3$ and $u_2$ and the third part is the star $K_{1,s}$ centred at $v_2.$ Clearly $T'$ is not a path. Let $Q$ be the $v_3-u_2$ path in $T'$ and let there are $l$ vertices of $T'$ which are not in $Q.$ Delete all the vertices from $T'$ which are not in the path $Q$ and add a path on  $l$ vertices at $1$ to form a new tree $\bar{T}$. At $v_3$ in $\bar{T}$ there are two components and the component containing $u_3$ is the Perron component. By Theorem \ref{pt3},  the characteristic set of $\bar{T}$ is either $\{u_3,v_3\}$ or moves away from $v_2.$ The tree $\bar{T}$ can also be obtained from $\hat{T}$ by following the perturbation mentioned in Lemma \ref{SC-02}. Hence by Lemma \ref{SC-02}, $S_c(\hat{T})=\{v_2\}.$ So $d_{\bar{T}}(S_c,\chi)\geq d_{\hat{T}}(S_c,\chi).$ This proves the result.
 \end{proof}
 
For $1\leq i\leq n-g,$ we have
\begin{equation}\label{eq20}
f_{P_{n - g, g}}(i) = i(n-g-i) + i(2^g).
\end{equation}
Here the first term denotes the number of subtrees of $P_{n-g,g}$ containing the vertex $i$ but not $n-g$, while the second term counts the number of subtrees of $P_{n-g,g}$ containing both $i$ and $n-g$.

 \begin{lemma}\label{th-02}
 Let $g_0$ be the smallest positive integer such that $2^{g_0}+1>n-g_0.$ Then for $1 \leq k \leq g_{0}-2$,  a vertex   $n-g_0-\alpha\in S_c(P_{n-g_0+k,g_0-k})$ for some $\alpha\geq 0.$
 \end{lemma}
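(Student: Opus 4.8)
The plan is to reduce the statement entirely to Lemma~\ref{SC-03} and to the defining minimality of $g_0$. Put $g' := g_0 - k$, so that $P_{n-g_0+k,\,g_0-k}$ is the path-star tree $P_{n-g',\,g'}$: its path runs over the vertices $1,2,\dots,n-g'$ (observe $n-g' = n-g_0+k$) and its $g'$ pendant vertices hang at the vertex $n-g'$. For $1\le k\le g_0-2$ we have $2\le g'\le g_0-1$; in particular $g'$ is a positive integer strictly smaller than $g_0$, so by the minimality of $g_0$ it fails the inequality defining $g_0$, i.e.\ $2^{g'}+1\le n-g'$. This is exactly the hypothesis placing us in the first branch of Lemma~\ref{SC-03}, so $S_c(P_{n-g',g'})$ consists of the integer(s) nearest $(n-g'+2^{g'})/2$. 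Since $g'\ge 1$ the quantity $2^{g'}$ is even, which lets us record both parity cases of Lemma~\ref{SC-03} uniformly: the \emph{smallest} element of $S_c(P_{n-g',g'})$ is
\[
s \;:=\; \left\lfloor \frac{n-g'+2^{g'}}{2}\right\rfloor \;=\; \left\lfloor \frac{n-g_0+k+2^{g_0-k}}{2}\right\rfloor .
\]

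It therefore suffices to prove $s\le n-g_0$; granting this, set $\alpha := n-g_0-s\ge 0$, and the vertex $n-g_0-\alpha = s$ is the claimed element of $S_c(P_{n-g_0+k,\,g_0-k})$. From the displayed formula, $s\le n-g_0$ follows from the numerical inequality
\[
2^{g_0-k}+k \;\le\; n-g_0+1 ,
\]
because then $n-g_0+k+2^{g_0-k}\le 2(n-g_0)+1$, so $s=\lfloor(n-g_0+k+2^{g_0-k})/2\rfloor\le n-g_0$.

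To finish I would argue by monotonicity in $k$. For every $k\le g_0-1$,
\[
\bigl(2^{g_0-(k+1)}+(k+1)\bigr)-\bigl(2^{g_0-k}+k\bigr) \;=\; 1-2^{g_0-k-1}\;\le\;0 ,
\]
so the map $k\mapsto 2^{g_0-k}+k$ is non-increasing on $\{1,\dots,g_0-1\}$ and attains its maximum over $1\le k\le g_0-2$ at $k=1$. There the inequality to check reads $2^{g_0-1}+1\le n-g_0+1$, i.e.\ $2^{g_0-1}+1\le n-(g_0-1)$, which is precisely the assertion that the positive integer $g_0-1$ fails the inequality defining $g_0$ — true by minimality of $g_0$ (and $g_0-1\ge 2$ is a legitimate value to test, since the range $1\le k\le g_0-2$ is non-empty only when $g_0\ge 3$). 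This would close the argument. The only points needing care are the uniform handling of the two parity cases of Lemma~\ref{SC-03} — valid precisely because $2^{g'}$ is even — and the reduction of the estimate $2^{g_0-k}+k\le n-g_0+1$, via monotonicity, to its single instance $k=1$; I expect this reduction to be the conceptual heart of the proof, everything after it being a one-line check against the minimality of $g_0$.
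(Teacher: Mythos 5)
Your proof is correct, but it locates the subtree core by a different mechanism than the paper does. The paper never invokes the first branch of Lemma~\ref{SC-03} for the trees $P_{n-g_0+k,\,g_0-k}$; instead it uses the count formula (\ref{eq20}) to compute $f$ at the two adjacent vertices $n-g_0$ and $n-g_0+1$, shows $f_{P_{n-g_0+k,g_0-k}}(n-g_0)-f_{P_{n-g_0+k,g_0-k}}(n-g_0+1)=n-g_0-(k-1+2^{g_0-k})\geq 0$ by rewriting it as $[n-(g_0-1)-(2^{g_0-1}+1)]+2^{g_0-k}(2^{k-1}-1)-k+1$ and bounding the last terms directly, and then concludes from the strict concavity of $f$ along paths (Lemma~\ref{SC-0}) that the maximizer lies at position at most $n-g_0$. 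You instead read off the exact location of the core from the closed formula of Lemma~\ref{SC-03} (valid since $2^{g_0-k}+1\leq n-(g_0-k)$ by minimality of $g_0$), handle the two parity cases uniformly with a floor, and reduce the resulting requirement $2^{g_0-k}+k\leq n-g_0+1$ to its $k=1$ instance by monotonicity of $k\mapsto 2^{g_0-k}+k$. Notably, both arguments hinge on exactly the same numerical inequality $2^{g_0-k}+k\leq n-g_0+1$ and on applying the minimality of $g_0$ at $g_0-1$; your monotonicity reduction is a cleaner way to verify it than the paper's direct estimate, while the paper's route via (\ref{eq20}) and concavity is more self-contained in that it only compares two adjacent subtree counts rather than relying on the full closed-form description of $S_c(P_{n-g,g})$.
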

 
 \begin{proof}
 Since $2^{g_0}+1>n-g_0,$ by Lemma \ref{SC-03}, $S_c(P_{n-g_0,g_0})={n-g_0}.$ For $1 \leq k \leq g_0-2,$ by (\ref{eq20}), we have
 $$f_{P_{n-g_0+k,g_0-k}}(n-g_0)=(n-g_0)(k+2^{g_0-k})$$ and 
 $$f_{P_{n-g_0+k,g_0-k}}(n-g_0+1)=(n-g_0+1)(k-1+2^{g_0-k}).$$ Then 
 \begin{align*}
 &f_{P_{n-g_0+k,g_0-k}}(n-g_0)-f_{P_{n-g_0+k,g_0-k}}(n-g_0+1)\\&=n-g_0-(k-1+2^{g_0-k})\\&=[n-(g_0-1)-(2^{g_0-1}+1)]+2^{g_0-1}-2^{g_0-k}-k+1\\&\geq 2^{g_0-k}(2^{k-1}-1)-k+1\\&\geq 0.
 \end{align*}
 By Lemma \ref{SC-00}, the function $f_T$ is strictly concave, hence the result follows.
 \end{proof}
 
 \begin{theorem}\label{th-0}
 Let $g_0$ be the smallest positive integer such that $2^{g_0}+1>n-g_0.$ Then among all trees on $n\geq 5$ vertices the path-star tree $P_{n-g_0,g_0}$ maximizes the distance between the subtree core and the characteristic set.
\end{theorem}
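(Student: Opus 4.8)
The plan is to reduce to the path-star family via Theorem~\ref{th-01} and then show that $d(g):=d_{P_{n-g,g}}(S_c,\chi)$, viewed as a function of $g\in\{2,\dots,n-3\}$, is maximized at $g=g_0$. The first step is to locate $\chi(P_{n-g,g})$. At the star centre $n-g$ the component that is the path on the $n-g-1\ge 2$ vertices $1,\dots,n-g-1$ has, by Lemma~\ref{L-01}, a diagonal entry of its bottleneck matrix equal to $n-g-1\ge 2$, so its Perron value is at least $2$, while each of the $g$ singleton components has Perron value $1$; thus the path component is the \emph{unique} Perron component at $n-g$. By Theorems~\ref{pt2} and~\ref{pt3} this forces $\chi(P_{n-g,g})$ to lie on the path $1\,\text{--}\,2\,\text{--}\cdots\text{--}\,(n-g)$ with no star edge incident to it, and the same argument applied at vertex $2$ rules out the edge $\{1,2\}$. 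Let $\ell(g)\le r(g)$ denote the two (possibly equal) endpoints of $\chi(P_{n-g,g})$; then $2\le\ell(g)\le r(g)\le n-g$. For $2\le g\le g_0-1$ put $s(g):=\lfloor(n-g+2^g)/2\rfloor$, which by Lemma~\ref{SC-03} equals $\min S_c(P_{n-g,g})$ whichever the parity of $n-g$, and put $s(g_0):=n-g_0$.

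The argument now rests on two monotonicities. First, Lemma~\ref{CS-02}(3),(4): going from $P_{n-g,g}$ to $P_{n-g-1,g+1}$ either moves the characteristic set strictly into the interval spanned by vertices $r(g)-1$ and $r(g)$ (when $\chi$ is an edge), or moves it from the vertex $r(g)=i$ to the edge $\{i-1,i\}$; in both cases $r(g)-1\le r(g+1)\le r(g)$, so $r$ is non-increasing in $g$ with steps of size at most $1$. Second, Lemma~\ref{th-02} gives $s(g)\le n-g_0$ for $2\le g\le g_0-1$, hence $s(g)\le s(g_0)$ for all $2\le g\le g_0$. In addition, Lemmas~\ref{CS-01} and~\ref{SC-03} (equivalently Lemma~\ref{L-03}) give $r(2)=\lfloor n/2\rfloor+1$, while for $2\le g\le g_0-1$ one has $s(g)\ge\lfloor(n-g+2^g)/2\rfloor\ge\lfloor n/2\rfloor+1$ (as $2^g-g\ge 2$) and $s(g_0)=n-g_0\ge\lfloor n/2\rfloor+1$ (a routine verification that $g_0\le\lceil n/2\rceil-1$ for $n\ge 5$); combined with the monotonicity of $r$ this gives $r(g)\le r(2)\le s(g)$ for every $2\le g\le g_0$. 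So the characteristic set never lies to the right of the subtree core on this range, and there $d(g)=s(g)-r(g)$ (one checks this identity in each of the four vertex/edge configurations of $\chi$ and $S_c$).

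Finally the two regimes. If $g_0\le g\le n-3$ then $2^g+g\ge 2^{g_0}+g_0\ge n$, so $2^g+1>n-g$ and $S_c(P_{n-g,g})=\{n-g\}$ by Lemma~\ref{SC-03}; since $\chi$ lies (weakly) to the left of $n-g$ we get $d(g)=(n-g)-r(g)$, whence $d(g)-d(g+1)=1-(r(g)-r(g+1))\ge 0$, so $d$ is non-increasing on $[g_0,n-3]$. If $2\le g\le g_0$, then $d(g)=s(g)-r(g)\le s(g_0)-r(g_0)=d(g_0)$ because $s(g)\le s(g_0)$ and $r(g)\ge r(g_0)$. Hence $\max_g d(g)=d(g_0)$, and Theorem~\ref{th-01} upgrades this to $\max_T d_T(S_c,\chi)=d_{P_{n-g_0,g_0}}(S_c,\chi)$, which is the assertion.

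The genuinely delicate point is the location of $\chi(P_{n-g,g})$: one must be sure that for \emph{every} admissible $g$ the characteristic set stays on the path (never slipping onto a star edge) with both endpoints in $\{2,\dots,n-g\}$, so that Lemma~\ref{CS-02} applies and the Perron-component comparisons above remain valid. The remaining work is the floor/parity bookkeeping identifying $s(g)$ with $\min S_c(P_{n-g,g})$ and checking $r(2)=s(2)$, together with the small cases $n\in\{5,6\}$ (where $g_0=2$ and one of the two ranges degenerates to a single point), all of which follow from the displayed formulas and Lemma~\ref{L-03}.
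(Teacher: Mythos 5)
Your proposal is correct and follows essentially the same route as the paper: reduce to path-star trees via Theorem~\ref{th-01}, then compare $d_{P_{n-g,g}}(\chi,S_c)$ across $g$ using Lemma~\ref{SC-03} for the subtree core, Lemma~\ref{CS-02} for the one-step movement of the characteristic set, and Lemma~\ref{th-02} for the regime $g<g_0$. The only difference is presentational: you make explicit the bookkeeping the paper leaves implicit, namely the Perron-component argument placing $\chi(P_{n-g,g})$ on the path part and the verification $r(g)\le s(g)$ ensuring the distance equals the positional gap.
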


\begin{proof}
 By Theorem \ref{th-01}, we need to consider  path-star trees only. Consider the path-star tree $P_{n-g_0,g_0}$. Let $d_{P_{n-g_0,g_0}}(\chi,S_c)=r.$ We show that for $g\neq g_0$, $d_{P_{n-g,g}}(\chi,S_c)\leq r.$ By Lemma \ref{SC-03}, $S_c(P_{n-g_0,g_0})=\{n-g_0\}.$ So $\chi(P_{n-g_0,g_0})=\{n-g_0-r\}$ or $\{n-g_0-r-1,n-g_0-r\}.$

First suppose that $g=g_0+1$. Then by Lemma \ref{SC-03}, $S_c(P_{n-g,g})=S_c(P_{n-g_0-1,g_0+1})=\{n-g_0-1\}$. By Lemma \ref{CS-02},  if $\chi(P_{n-g_0,g_0})=\{n-g_0-r\}$, then
	$\chi(P_{n-g_0-1,g_0+1})=\{n-g_0-r-1,n-g_0-r\} $ and  if
	 $\chi(P_{n-g_0,g_0})=\{n-g_0-r-1,n-g_0-r\}$, then $\chi(P_{n-g_0-1,g_0+1})= \{n-g_0-r-2,n-g_0-r-1\}$  or $\{n-g_0-r-1\} $ or $\{n-g_0-r-1,n-g_0-r\}$. It is easy to check that $d_{P_{n-g_0-1,g_0+1}}(\chi,S_c) \leq r.$ Same argument holds for any $g > g_0.$

Let $1\leq k\leq g_0-2.$ Now suppose that $g=g_0-k$. Then by Lemma \ref{th-02}, the subtree core of $P_{n-g_0+k,g_0-k}$ moves at least $k$ steps and by Lemma \ref{CS-02} the characteristic set moves at most $k$ steps towards center. So $d_{P_{n-g_0+k,g_0-k}}(\chi,S_c) \leq r$ and hence the result follows.
 \end{proof}
 	
We define $\delta_n(\chi,S_c)=max\{d_T(\chi,S_c): $T is a tree on $n$ vertices$\}$. Analogously we define  $\delta_n(C,S_c)$, $\delta_n(C,C_d)$, $\delta_n(C_d,S_c)$, $\delta_n(C,\chi)$ and $\delta_n(C_d,\chi)$. To get the value of  $\delta_n(\chi,S_c)$, it is important to know the position of the characteristic set of a path-star tree. In this regard,  Abreu et.al.  proved a result (see \cite{Afjk}, Lemma 2.1) which gives the Perron value of a path-star branch at  a vertex $v$ of $T.$ We propose the following conjecture related to the characteristic set of path-star trees.\\

\textbf{Conjecture:} The characteristic set of a path-star tree contains an edge.\\

In \cite{Afjk}, the authors have established the values  of $\lim_{n\rightarrow \infty}\frac{\delta_n(C,\chi)}{n}$ and $\lim_{n\rightarrow \infty}\frac{\delta_n(C_d,\chi)}{n}$.  We will now do the same for other four remaining  such maximum distances.

\begin{theorem}(\cite{P},Theorem 3.4 and 3.5)\label{CC}
Among all trees on $n\geq 5$ vertices, the path-star tree $P_{n-\lfloor \frac{n}{2}\rfloor, \lfloor \frac{n}{2}\rfloor}$  maximizes the distance between center and centroid. Also $\delta_n(C,C_d)=\lfloor\frac{n-3}{4}\rfloor.$
\end{theorem}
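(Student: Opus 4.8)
The plan is to prove the stated equality by the two matching inequalities $\delta_n(C,C_d)\geq\lfloor\frac{n-3}{4}\rfloor$ and $\delta_n(C,C_d)\leq\lfloor\frac{n-3}{4}\rfloor$. For the first I would just compute the two centres of the candidate tree. Put $g=\lfloor\frac n2\rfloor$, so the path of $P_{n-g,g}$ has $n-g=\lceil\frac n2\rceil$ vertices. Along the path, the star-end vertex $n-g$ has weight $n-g-1$ while its path neighbour has weight $\max(n-g-2,\,g+1)$, and $g\geq\lfloor\frac n2\rfloor$ forces $g+1>n-g-1$; hence $n-g$ is the unique centroid. The eccentricity of the path vertex $i$ is $\max(i-1,\,n-g-i+1)$, so the center is $\{\frac{n-g}{2}+1\}$ if $n-g$ is even and $\{\frac{n-g+1}{2},\frac{n-g+3}{2}\}$ if $n-g$ is odd; in both cases $d_{P_{n-g,g}}(C,C_d)=\lfloor\frac{n-g}{2}\rfloor-1$, and checking the four residues of $n$ modulo $4$ identifies this with $\lfloor\frac{n-3}{4}\rfloor$.

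For the upper bound, fix any tree $T$ on $n\geq 5$ vertices, choose a centroid vertex $c$ and a center vertex $z$ realizing $d:=d_T(C_d,C)=d_T(c,z)$, and assume $d\geq 1$. Let $B$ be the branch at $c$ containing $z$ and let $r=e_T(z)$ be the radius. I would first argue that the eccentricity of $z$ is realized by a vertex lying on the side of $z$ opposite to $c$: if not, then passing from $z$ to its neighbour towards $c$ would not increase the eccentricity, so that neighbour would also be a center vertex and would be strictly closer to $c$ (or equal to $c$), contradicting the minimality of $d$. Consequently $B$ contains a path starting at $c$ of length $d+r$, so $|E(B)|\geq d+r$; and since $c$ is a centroid, Jordan's criterion gives $|E(B)|\leq\lfloor\frac n2\rfloor$, whence $d+r\leq\lfloor\frac n2\rfloor$. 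Secondly, a centroid of a tree on at least three vertices is not a pendant vertex, so $c$ has a branch different from $B$ meeting a vertex at distance at least $1$ from $c$; the distance from $z$ to such a vertex is at least $d+1$ and at most $r$, giving $d\leq r-1$. Adding the two inequalities, $2d\leq\lfloor\frac n2\rfloor-1$, hence $d\leq\lfloor\frac{n-2}{4}\rfloor$, and $\lfloor\frac{n-2}{4}\rfloor=\lfloor\frac{n-3}{4}\rfloor$ unless $n\equiv 2\pmod 4$.

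The last unit has to be removed by hand when $n=4m+2$. Suppose $d=m$; then both inequalities above are tight, so $r=m+1$, $|E(B)|=2m+1=d+r$, and every branch at $c$ other than $B$ has depth $1$. Since $B$ has $2m+1$ edges and already contains a path of that length, $B$ is exactly that path, and the other $2m$ vertices are pendant vertices at $c$, so $T=P_{2m+2,2m}$. But the computation from the first paragraph — now with $g=2m=\lfloor\frac n2\rfloor-1$, so that the centroid is the edge $\{2m+1,2m+2\}$ while the center is $\{m+2\}$ — gives $d_T(C,C_d)=m-1$, contradicting $d=m$. Hence $d\leq m-1=\lfloor\frac{n-3}{4}\rfloor$ here as well, and combining with the lower bound finishes the proof.

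The step I expect to be the main obstacle is the first of the two estimates above: locating the center relative to $z$ and $c$ and deducing $|E(B)|\geq d+r$. Making this rigorous requires distinguishing whether the center and the centroid are single vertices or edges and examining where $c$ attaches to a diameter path; in several of these configurations $|E(B)|=d+r$ on the nose, and one must verify both that the bound never drops below $d+r$ and that the companion bound $d\leq r-1$ survives each case. By comparison the path-star computations and the arithmetic modulo $4$ are routine, as is the parallel treatment of the edge-valued cases of $C_d$ and $C$.
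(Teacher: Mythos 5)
Your proposal is correct, but it does not follow the paper's route: the paper states this result only as a quotation of \cite{P} (Theorems 3.4 and 3.5) and gives no proof, and the method used there (and for the analogous results in this paper, e.g.\ Theorem 3.1) is a reduction argument --- perturb an arbitrary tree, moving branches so that the centre--centroid distance does not decrease, until one reaches a path-star tree, and then optimize over the star size $g$. You instead prove the bound directly for an arbitrary tree: picking $c\in C_d(T)$ and $z\in C(T)$ realizing $d$, you show some eccentric vertex of $z$ lies in a component of $T-z$ not containing $c$ (otherwise the neighbour of $z$ towards $c$ would be a centre vertex closer to $C_d$, contradicting minimality of $d$), so the branch $B$ at $c$ containing $z$ has at least $d+r$ edges; Jordan's criterion gives $|E(B)|\le\lfloor n/2\rfloor$, and since a centroid vertex is never pendant you also get $d\le r-1$, whence $2d+1\le\lfloor n/2\rfloor$. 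This yields $\lfloor(n-2)/4\rfloor$, and your rigidity analysis for $n=4m+2$ is sound: equality forces $T=P_{2m+2,2m}$, whose centroid is the edge $\{2m+1,2m+2\}$ and centre is $\{m+2\}$, so its centre--centroid distance is $m-1$, killing the case $d=m$. The matching lower bound via the explicit centre, centroid and the mod-$4$ check of $\lfloor(n-g)/2\rfloor-1=\lfloor(n-3)/4\rfloor$ for $g=\lfloor n/2\rfloor$ also checks out, and your worry about case distinctions (vertex versus edge centre/centroid) is unnecessary, since the argument only uses the chosen pair $(c,z)$ and minimality of $d$. What the two approaches buy: yours is elementary, self-contained and avoids both spectral machinery and any ``extremal trees are path-stars'' reduction, while the reduction method of \cite{P} identifies the extremal family systematically and is the template reused in this paper for the other pairs of central parts (e.g.\ characteristic set versus subtree core), where a direct two-inequality argument of your kind is not available.
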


\begin{corollary}
$ \lim_{n \rightarrow \infty} \frac{\delta_n(C,C_d)}{n}=\frac{1}{4}.$
\end{corollary}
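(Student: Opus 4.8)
The plan is to invoke Theorem \ref{CC} directly, which supplies the exact value $\delta_n(C,C_d) = \lfloor \frac{n-3}{4} \rfloor$ for all $n \ge 5$, and then to pass to the limit by an elementary squeeze argument. Using the basic inequality $x - 1 < \lfloor x \rfloor \le x$ with $x = \frac{n-3}{4}$, one obtains
\[
\frac{n-3}{4} - 1 < \delta_n(C,C_d) \le \frac{n-3}{4}.
\]
Dividing through by $n$ gives
\[
\frac{n-3}{4n} - \frac{1}{n} < \frac{\delta_n(C,C_d)}{n} \le \frac{n-3}{4n},
\]
and since both the leftmost and rightmost expressions converge to $\frac{1}{4}$ as $n \to \infty$, the Sandwich Theorem yields $\lim_{n\to\infty}\frac{\delta_n(C,C_d)}{n} = \frac{1}{4}$.

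There is essentially no obstacle here: all the real content is carried by Theorem \ref{CC}, and the corollary merely records that replacing the exact extremal value $\frac{n-3}{4}$ by its floor perturbs it only by a bounded additive term, which is negligible after normalizing by $n$. The only point worth a moment's care is that Theorem \ref{CC} is valid for every $n \ge 5$, so the displayed inequalities hold for all sufficiently large $n$ and the limit is legitimate.
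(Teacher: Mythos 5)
Your proposal is correct and follows the same route as the paper, which simply cites Theorem \ref{CC} and leaves the routine floor-function limit implicit; your squeeze argument just spells out that omitted step.
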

\begin{proof}
Follows from Theorem \ref{CC}.
\end{proof}

\begin{theorem}(\cite{Dp},Proposition 2.7 and Corollary 2.8)\label{CSc}
Let $g_0$ be the smallest positive integer such that $2^{g_0}+1>n-g_0.$ Then among all trees on $n\geq 5$ vertices, the path-star tree $P_{n-g_0,g_0}$ maximizes the distance between center and subtree core. Also we have $\delta_n(C,S_c)=\lfloor\frac{n-g_0}{2}\rfloor -1.$
\end{theorem}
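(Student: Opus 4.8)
The plan is to follow the two-step scheme already used for Theorems~\ref{th-01} and~\ref{th-0}: first show that an arbitrary tree on $n$ vertices can be replaced by a path-star tree without decreasing the distance between centre and subtree core, and then maximise $d_{P_{n-g,g}}(C,S_c)$ over the admissible range $2\le g\le n-3$. The feature that makes the centre easier to handle than the characteristic set is that $C(T)$ is always the midpoint of every longest path of $T$, so the way it reacts to a local perturbation can be read off directly from how eccentricities change, with no need to invoke bottleneck matrices.

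For the reduction, fix $v\in S_c(T)$ with $d_T(C,\{v\})=d_T(C,S_c)=:r$, which we may assume is $\ge 1$, and let $B$ be the component of $T-v$ that contains a nearest centre vertex. I would apply, essentially as in the proof of Theorem~\ref{th-01}, the perturbations that first collect all components of $T-v$ other than $B$ into pendant vertices at $v$ (Lemma~\ref{SC-01}), and then repeatedly use the move of Lemma~\ref{SC-02} to straighten $B$, pushing the excess vertices out to the tip of $B$ farthest from $v$, until what remains is a single bare path; the effect of all of this on the subtree core is exactly as in Theorem~\ref{th-01}. The new point to check is that none of these moves pulls the centre towards $v$: each move only deletes from, or straightens, a branch at $v$ beyond which the current centre lies, while lengthening the bare path leading away from $v$, so a comparison of eccentricities shows that every vertex of $B$ has its eccentricity non-increased whereas $v$ and the new pendant vertices have strictly larger eccentricity than the neighbour of $v$ in $B$; hence the centre cannot cross that neighbour and it stays at distance $\ge r$ from $v$. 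The outcome is a path-star tree $P_{n-g,g}$ with $d_{P_{n-g,g}}(C,S_c)\ge r$. I expect this eccentricity bookkeeping --- together with a short list of degenerate configurations where it breaks down (for instance $B$ a path of length $\le 1$, which turns the reduced tree into a star or a double star with $v$ in its centre, or $v$ having only one further neighbour, which makes $T$ a path), in each of which $d_T(C,S_c)$ is at once at most $\lfloor\tfrac{n-g_0}{2}\rfloor-1$, the case $n=5$ being checked by hand --- to be the main obstacle.

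It then remains to maximise over path-star trees. A direct eccentricity computation gives that the centre of $P_{n-g,g}$ is the vertex $\tfrac{n-g}{2}+1$ when $n-g$ is even and the pair $\{\tfrac{n-g+1}{2},\tfrac{n-g+3}{2}\}$ when $n-g$ is odd, so in both cases $d_{P_{n-g,g}}(C,\{n-g\})=\lfloor\tfrac{n-g}{2}\rfloor-1$. For $g\ge g_0$ one has $2^{g}+1>n-g$, hence $S_c(P_{n-g,g})=\{n-g\}$ by Lemma~\ref{SC-03}, and therefore $d_{P_{n-g,g}}(C,S_c)=\lfloor\tfrac{n-g}{2}\rfloor-1$, which is largest when $g=g_0$ and equals $\lfloor\tfrac{n-g_0}{2}\rfloor-1$. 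For $g=g_0-k$ with $1\le k\le g_0-2$, Lemma~\ref{th-02} supplies a vertex $n-g_0-\alpha\in S_c(P_{n-g_0+k,g_0-k})$ with $\alpha\ge 0$, so the core has a vertex no farther from vertex $1$ than $n-g_0$, while the centre has moved to position about $\tfrac{n-g_0+k}{2}+1$; consequently $d_{P_{n-g_0+k,g_0-k}}(C,S_c)\le (n-g_0)-\bigl(\tfrac{n-g_0+k}{2}+1\bigr)\le\lfloor\tfrac{n-g_0}{2}\rfloor-1$, and the only remaining value $g=1$ is the path $P_n$, with distance $0$. Putting the two steps together yields $\delta_n(C,S_c)=d_{P_{n-g_0,g_0}}(C,S_c)=\lfloor\tfrac{n-g_0}{2}\rfloor-1$, with the small values of $n$ settled directly.
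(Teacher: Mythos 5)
Note first that the paper does not prove Theorem~\ref{CSc} at all: it is imported from \cite{Dp} (Proposition 2.7 and Corollary 2.8), so there is no in-paper argument to compare with and your proposal has to stand on its own. Its overall scheme (reduce an arbitrary tree to a path-star via Lemma~\ref{SC-01} and repeated Lemma~\ref{SC-02} moves, then optimise over $g$ using Lemma~\ref{SC-03}, Lemma~\ref{th-02} and the explicit centre of $P_{n-g,g}$) is the natural one and matches the machinery this paper quotes from \cite{Dp}. The second half is essentially correct: $d_{P_{n-g,g}}(C,\{n-g\})=\lfloor\frac{n-g}{2}\rfloor-1$, the case $g\ge g_0$, and the estimate for $g=g_0-k$ all check out, modulo the small unstated fact that in $P_{n-g,g}$ the subtree core lies on the star side of the centre (immediate from Lemma~\ref{SC-03} or Proposition~\ref{SC-05}), without which the inequality $d\le (n-g_0)-(\frac{n-g_0+k}{2}+1)$ is not justified.

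The genuine gap is in the centre bookkeeping of the reduction. From ``every vertex of $B$ has its eccentricity non-increased'' and ``$v$ and the new pendants have larger eccentricity than the neighbour of $v$ in $B$'' you conclude that the centre ``stays at distance $\ge r$ from $v$''; but that premise only shows the centre does not reach $v$, i.e.\ distance $\ge 1$. It does not prevent the centre from sliding inside $B$ towards $v$, because eccentricities of vertices of $B$ may drop non-uniformly, and the straightening moves delete side branches that may lie beyond the current centre --- exactly the branches that could be holding the centre away from $v$. The claim is true but needs an argument. Two ways to close it: (i) for the collapsing step, compare with the old centre vertex $c$ at distance $r$: any $u$ with $d(u,v)<r$ satisfies $e_T(u)\ge \mathrm{rad}(T)+1$, while $d(u,v)+h<r+h\le e_T(c)=\mathrm{rad}(T)$, so the eccentricity of $u$ is realised inside $B\cup\{v\}$ and is unchanged by the collapse; hence $e_{\tilde T}(u)>\mathrm{rad}(T)\ge e_{\tilde T}(c)$ and $u\notin C(\tilde T)$; (ii) for the straightening, avoid per-move tracking altogether and show that in \emph{any} tree consisting of one branch $B$ at $v$ plus pendant vertices at $v$, the centre is at distance at most $\lfloor\frac{|B|+1}{2}\rfloor-1$ from $v$ (a centre vertex at distance $r'$ has a branch avoiding $v$ of depth at least $\mathrm{rad}-1\ge r'$, and the borderline configuration $|B|=2r'$ forces the second centre vertex one step closer to $v$); since this bound equals $d_{P_{n-g,g}}(C,\{n-g\})$ for the straightened tree, the final path-star automatically does at least as well as every intermediate tree once Lemma~\ref{SC-02} has kept the subtree core at $v$. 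With either patch, your argument goes through and yields $\delta_n(C,S_c)=\lfloor\frac{n-g_0}{2}\rfloor-1$, attained by $P_{n-g_0,g_0}$.
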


\begin{corollary}\label{cor1}
$ \lim_{n \rightarrow \infty} \frac{\delta_n(C,S_c)}{n}=\frac{1}{2}.$
\end{corollary}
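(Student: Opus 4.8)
The plan is to leverage Theorem \ref{CSc}, which already identifies $P_{n-g_0,g_0}$ as the extremal tree and gives the exact formula $\delta_n(C,S_c)=\lfloor\frac{n-g_0}{2}\rfloor - 1$, where $g_0$ is the smallest positive integer with $2^{g_0}+1 > n - g_0$. So the corollary reduces to showing $\lim_{n\to\infty}\frac{1}{n}\left(\lfloor\frac{n-g_0}{2}\rfloor - 1\right) = \frac{1}{2}$, which in turn amounts to showing that $g_0 = g_0(n)$ grows slowly enough that $g_0/n \to 0$.

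First I would pin down the growth rate of $g_0$. By definition $g_0$ is the least positive integer satisfying $2^{g_0} > n - g_0 - 1$, equivalently (since the right-hand side is essentially $n$ for the range of $g_0$ we care about) $g_0$ is of order $\log_2 n$. More precisely, one checks that $g_0 - 1$ fails the inequality, so $2^{g_0 - 1} \le n - g_0$, giving $2^{g_0-1} \le n$, hence $g_0 \le 1 + \log_2 n$. This crude upper bound is all that is needed: it shows $g_0 = O(\log n)$, so $g_0/n \to 0$ as $n \to \infty$.

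Then I would compute the limit directly. Writing $\lfloor\frac{n-g_0}{2}\rfloor - 1 = \frac{n-g_0}{2} + O(1)$, we get
\begin{equation*}
\frac{\delta_n(C,S_c)}{n} = \frac{1}{n}\left(\frac{n-g_0}{2} + O(1)\right) = \frac{1}{2} - \frac{g_0}{2n} + O\!\left(\frac{1}{n}\right).
\end{equation*}
Since $g_0 = O(\log n)$, both $\frac{g_0}{2n}$ and $\frac{O(1)}{n}$ tend to $0$, so the limit is $\frac{1}{2}$.

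There is no real obstacle here; the only thing to be careful about is justifying that $g_0$ is well-defined and $O(\log n)$, i.e. that the defining inequality is eventually satisfied by some $g_0$ growing like $\log_2 n$ rather than, say, remaining bounded or growing linearly. This is immediate from comparing $2^g$ with the linear function $n - g$. Everything else is the routine squeeze argument above, so the proof will be a couple of lines citing Theorem \ref{CSc} and the logarithmic bound on $g_0$.
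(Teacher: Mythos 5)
Your proposal is correct and follows essentially the same route as the paper: cite Theorem \ref{CSc} for the exact formula, use minimality of $g_0$ (failure of the defining inequality at $g_0-1$) to get $2^{g_0-1}\leq n$ and hence $g_0=O(\log_2 n)$, so $g_0/n\to 0$ and the limit is $\tfrac{1}{2}$. No differences worth noting.
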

\begin{proof}
Since   $g_0$ is the smallest positive integer such that $2^{g_0}+g_0> n-1,$ so $2^{g_0-1}+g_0-1\leq n-1.$  This implies $2^{g_0-1}< n.$ Taking logarithm with base 2 on both side , we have $g_0< 1+\log_2 n.$ As $n\geq 5$, So $0< g_0< 1+\log_2 n.$ Since $ \lim_{n \rightarrow \infty} \frac{\log_2 n}{n}=0$, so $ \lim_{n \rightarrow \infty} \frac{g_0}{n}=0.$

Hence $ \lim_{n \rightarrow \infty} \frac{\delta_n(C,S_c)}{n}=\lim_{n \rightarrow \infty}\frac{\lfloor\frac{n-g_0}{2}\rfloor -1}{n}=\frac{1}{2}$
\end{proof}

 \begin{theorem}(\cite{Dp},Proposition 3.5 and Theorem 3.6)\label{CdSc}
 Let $g_0$ be the smallest positive integer such that $2^{g_0}+1>n-g_0.$ Then among all trees on $n\geq 5$ vertices, the path-star tree $P_{n-g_0,g_0}$ maximizes the distance between centroid and subtree core. Also we have $\delta_n(C_d,S_c)=\lfloor\frac{n-1}{2}\rfloor -g_0.$
 \end{theorem}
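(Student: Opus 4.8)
The plan is to prove the statement in two stages: a perturbation argument reducing the extremal problem to path-star trees, and then an explicit comparison over the star size $g$. Throughout, by Proposition~\ref{prop:med1} the centroid is the median, hence is governed by the vertex cut-counts of the edges of the tree.

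\emph{Stage 1 (reduction to path-star trees).} Let $T$ be a tree on $n$ vertices. If $d_T(C_d,S_c)=0$ there is nothing to prove, so assume $d_T(C_d,S_c)\ge 1$. Mimicking the proof of Theorem~\ref{th-01} but tracking the centroid, pick the endpoint $v$ of $S_c(T)$ that is farther from $C_d(T)$, and let $C_1,\dots,C_k$ be the components of $T-v$, with $C_1$ containing the other endpoint of $S_c(T)$. This choice of $v$ forces $C_d(T)$ to lie inside $C_1$, and since $d_T(C_d,S_c)\ge1$ the branch $B_0$ at $v$ built from $C_1$ is the unique branch of largest size at $v$. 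Replace $C_2\cup\cdots\cup C_k$ by $s:=|C_2|+\cdots+|C_k|$ pendant vertices at $v$; iterating Lemma~\ref{SC-01} yields a tree $\tilde T$ with $S_c(\tilde T)=\{v\}$ in which $v$ has a single non-pendant neighbour. This operation does not change the vertex count on either side of any edge of $B_0$, so $C_d(\tilde T)=C_d(T)$ still lies in $B_0$ and $d_{\tilde T}(C_d,S_c)=d_T(C_d,S_c)$. If $\tilde T$ is not already a path-star tree, then $B_0$ is not a path; I would straighten it by repeatedly applying the transplant move of Lemma~\ref{SC-02}, with $y$ a branch vertex of $B_0$ and $z$ a pendant vertex of $B_0$ beyond $y$, each move strictly decreasing the number of vertices of degree $\ge3$ in $B_0$ and keeping $S_c=\{v\}$. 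After finitely many moves $B_0$ is a path with $v$ as an endpoint, i.e.\ the tree is $P_{|B_0|,\,s}$. The only cut-counts a transplant alters are those of the edges strictly between $y$ and $z$, and there the side away from $v$ becomes heavier; hence no cut-count on the $v$-side ever increases, so the median can only stay fixed or move away from $v=S_c$, and $d(C_d,S_c)$ does not decrease. (The three remaining configurations for $\{\text{centroid},\,S_c\}$ being a vertex or an edge are handled the same way, as in Theorem~\ref{th-01}.) Thus $\delta_n(C_d,S_c)$ is attained on the family $\{P_{n-g,g}\}$.

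\emph{Stage 2 (optimizing over $g$).} The weight of a path vertex $i$ of $P_{n-g,g}$ equals $\max\{i-1,\,n-i\}$, which is independent of $g$. Consequently, for all $g\le\lfloor\frac{n-1}{2}\rfloor$ (and $g_0\le\lfloor\frac{n-1}{2}\rfloor$ for $n\ge5$, since $2^{g_0-1}\le n-g_0$) the centroid of $P_{n-g,g}$ is $\{\tfrac{n+1}{2}\}$ for $n$ odd and $\{\tfrac n2,\tfrac n2+1\}$ for $n$ even, while for the remaining large $g$ the path is too short and $C_d=S_c=\{n-g\}$. Combining this with Lemma~\ref{SC-03} for $S_c(P_{n-g,g})$, one finds that on the range $2^g+1\le n-g$ the distance $d_{P_{n-g,g}}(C_d,S_c)$ is nondecreasing in $g$ (of order $2^{g}/2$), whereas on $2^g+1>n-g$ one has $S_c=\{n-g\}$ and $d_{P_{n-g,g}}(C_d,S_c)=\lfloor\frac{n-1}{2}\rfloor-g$, which is decreasing in $g$. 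Hence the maximum over $g$ is attained at the threshold $g=g_0$, with value $\lfloor\frac{n-1}{2}\rfloor-g_0$; the one outstanding point is the crossover inequality $d_{P_{n-g_0,g_0}}(C_d,S_c)\ge d_{P_{n-g_0+1,\,g_0-1}}(C_d,S_c)$, which follows from $2^{g_0-1}\le n-g_0\le 2^{g_0}$ by an estimate in the spirit of Lemma~\ref{th-02} and the proof of Theorem~\ref{th-0}.

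The main obstacle is the centroid bookkeeping in Stage~1. The effect of the collapse and of the transplant moves on the subtree core is supplied directly by Lemmas~\ref{SC-01} and~\ref{SC-02}; their effect on the centroid requires a separate argument. The collapse is painless because it preserves every cut-count along $B_0$, but a transplant move genuinely alters the cut-counts of the edges between $y$ and $z$, and one must verify — by a short case analysis on where the median sits relative to $y$ and $z$ — that the net effect is always to leave the median fixed or push it away from $v=S_c$, never toward it and never across $S_c$. Once that monotonicity is in hand, the rest is the bookkeeping of Stage~1's four configurations and the arithmetic (with floors) of Stage~2, both routine given Lemmas~\ref{SC-03} and~\ref{th-02}.
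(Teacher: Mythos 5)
First, a point of comparison: the paper does not prove this theorem at all --- it is quoted from reference \cite{Dp} (Proposition 3.5 and Theorem 3.6 there), so there is no in-paper proof to measure your argument against; your attempt has to stand on its own. Your overall strategy (collapse everything on the far side of the subtree core into pendants at $v$, straighten the centroid-side branch by the transplant of Lemma \ref{SC-02} while arguing the centroid never moves toward $v$, then optimize over $g$ among path-star trees) is viable, and the deferred claim at its heart is in fact true: since the centroid always lies in the side of every edge containing at least half the vertices, and your moves never increase the number of vertices on the $v$-side of any edge, the centroid cannot cross toward $v$; one only has to treat separately the destroyed and re-created attachment edge $\{y,y_1\}\to\{z,y_1\}$ (when the old centroid sits inside the transplanted path, it keeps the same position inside that path but the whole path is now farther from $v$) and the collapse step, where the weight of $v$ itself can drop.

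That said, as written the proposal has genuine gaps rather than finished steps. The monotonicity of the centroid under the Lemma \ref{SC-02} transplant is the crux of Stage 1, and you state it (``one must verify \dots'') instead of proving it; the cut-count sketch is the right idea, but it is exactly the part that needs the case analysis, so it cannot be left as an obstacle. Several auxiliary assertions are also not quite correct as stated: (i) in the collapse step, $C_d(\tilde T)=C_d(T)$ can fail --- the weight of $v$ decreases and in a balanced situation ($n$ even, the component at the old centroid containing $v$ of size exactly $n/2$) the centroid of $\tilde T$ acquires $v$, so the correct statement is only that the centroid does not move closer to $v$; (ii) a transplant need not ``strictly decrease the number of vertices of degree $\ge 3$'' (if $\deg(y)\ge 4$ it does not), so termination should be measured by, say, the number of pendant vertices, which does drop by one; (iii) in Stage 2 the heuristic ``nondecreasing of order $2^g/2$'' on the range $2^g+1\le n-g$ is not what is needed --- what is needed is the exact comparison of $d_{P_{n-g,g}}(C_d,S_c)$ with $\lfloor\frac{n-1}{2}\rfloor-g_0$ using Lemma \ref{SC-03} and the explicit centroid $\{\lceil\frac{n+1}{2}\rceil\}$ or $\{\frac n2,\frac n2+1\}$, and at the crossover $g=g_0-1$ the two distances can be equal (for $n=12$, both $P_{8,4}$ and $P_{9,3}$ give distance $1$), so the floor bookkeeping you wave at must actually be carried out. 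None of these is fatal --- the theorem is true and your plan can be completed --- but the median-tracking lemma and the exact $g_0$ versus $g_0-1$ comparison are the substance of the proof and are missing.
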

 
 \begin{corollary}\label{cor2}
$ \lim_{n \rightarrow \infty} \frac{\delta_n(C_d,S_c)}{n}=\frac{1}{2}.$
\end{corollary}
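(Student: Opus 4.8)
\textbf{Proof proposal for Corollary \ref{cor2}.}
The plan is to reduce everything to the closed formula already supplied by Theorem \ref{CdSc}, namely $\delta_n(C_d,S_c)=\lfloor\frac{n-1}{2}\rfloor-g_0$, and then to control the size of $g_0$ exactly as was done in the proof of Corollary \ref{cor1}. So the first step is simply to quote Theorem \ref{CdSc} to obtain
\[
\frac{\delta_n(C_d,S_c)}{n}=\frac{\lfloor\frac{n-1}{2}\rfloor-g_0}{n}.
\]

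The second step is to recall the bound on $g_0$. Since $g_0$ is the smallest positive integer with $2^{g_0}+g_0>n-1$, minimality gives $2^{g_0-1}+g_0-1\le n-1$, hence $2^{g_0-1}<n$, and taking base-$2$ logarithms yields $0<g_0<1+\log_2 n$ for $n\ge 5$. Because $\lim_{n\to\infty}\frac{\log_2 n}{n}=0$, we get $\lim_{n\to\infty}\frac{g_0}{n}=0$. This is the one genuinely substantive ingredient, and it is already present in the excerpt (it is exactly the computation carried out inside the proof of Corollary \ref{cor1}); no new estimate is needed.

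The third and final step is to pass to the limit term by term: $\frac{1}{n}\lfloor\frac{n-1}{2}\rfloor\to\frac12$ since $\lfloor\frac{n-1}{2}\rfloor$ differs from $\frac{n-1}{2}$ by at most $\frac12$, and $\frac{g_0}{n}\to 0$ by the previous step, so the difference converges to $\frac12$. I do not anticipate any real obstacle here — the corollary is a direct consequence of the explicit value of $\delta_n(C_d,S_c)$ together with the sublinear (in fact logarithmic) growth of $g_0$; the only point requiring a line of care is the justification that $g_0/n\to 0$, which we borrow verbatim from the argument for Corollary \ref{cor1}.
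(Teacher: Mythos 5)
Your proposal is correct and follows exactly the paper's route: the paper also deduces the limit directly from the formula $\delta_n(C_d,S_c)=\lfloor\frac{n-1}{2}\rfloor-g_0$ of Theorem \ref{CdSc} together with the fact $\lim_{n\to\infty}\frac{g_0}{n}=0$ established in the proof of Corollary \ref{cor1}. You merely spell out the elementary limit computation in more detail than the paper does.
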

\begin{proof}
Since $ \lim_{n \rightarrow \infty} \frac{g_0}{n}=0$, The result follows from  Theorem \ref{CdSc}.
\end{proof}

 \begin{proposition}(\cite{P},Theorem 3.3 and \cite{Dp}, Proposition 4.1)\label{SC-05}
 In any path-star tree the following hold.
 \begin{enumerate}
 \item The characteristic set lies in the path connecting the center and the centroid.
 \item The centroid lies in the path connecting the center and the subtree core.
 \end{enumerate}
 \end{proposition}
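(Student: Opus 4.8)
Fix the path-star tree $T=P_{n-g,g}$ and label its non-pendant vertices $1,2,\dots,n-g$ consecutively along the path, with the $g$ pendants attached at $n-g$; call $1,\dots,n-g$ the \emph{spine}. The first step is to place all four central parts on the spine. A pendant $w$ has the largest eccentricity and the largest weight of any vertex of $T$; it satisfies $f_T(w)<f_T(v)$ for its neighbour $v$, because $f_T(v)=2\,f_{T-w}(v)>1+f_{T-w}(v)=f_T(w)$ as soon as $n\ge 3$; and $T-w$ is connected, hence the unique Perron component at $w$, so by Theorems~\ref{pt1} and~\ref{pt2} $w$ lies in no characteristic set. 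Thus $C(T)$, $C_d(T)$, $S_c(T)$ and $\chi(T)$ are each an interval of one or two consecutive spine vertices, and both assertions become comparisons of spine indices (here ``lying on the path joining $X$ and $Y$'' is read as ``contained in the subpath spanned by $X\cup Y$'').

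For part (2) the plan is to write the three positions out. The eccentricity of spine vertex $i$ equals $\max\{i-1,\ n-g-i+1\}$, so $C(T)$ is the vertex $\frac{n-g+2}{2}$ when $n-g$ is even and the edge $\{\frac{n-g+1}{2},\frac{n-g+3}{2}\}$ when $n-g$ is odd. For $i<n-g$ the two components of $T-i$ have $i-1$ and $n-i$ vertices, so $C_d(T)$ is the vertex or edge at which both of these counts are at most $\lfloor n/2\rfloor$, or $C_d(T)=\{n-g\}$ if no spine vertex qualifies; in every case $\max C_d(T)\le\lfloor n/2\rfloor+1$. Lemma~\ref{SC-03} gives $S_c(T)$. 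Then $\max C(T)\le\min C_d(T)$ reduces, after clearing floors, to $g\ge 1$, and $\max C_d(T)\le\min S_c(T)$ reduces — via the increment $f_{P_{n-g,g}}(i+1)-f_{P_{n-g,g}}(i)=2^g+n-g-2i-1$ from~(\ref{eq20}) together with $\max C_d(T)\le\lfloor n/2\rfloor+1$ — to an inequality of the shape $2^g\ge(\text{small polynomial in }g)$, valid for all $g\ge 2$. What needs care is the bookkeeping over the parities of $n$ and $g$, the tight borderline cases $g=2$, and the dichotomy $2^g+1\le n-g$ versus $2^g+1>n-g$ in Lemma~\ref{SC-03}.

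For part (1) I would not attempt to locate $\chi(T)$ exactly (its position is not known; see the Conjecture), but argue with Perron components. Put $c_\ell=\min C(T)$ and $c_r=\max C_d(T)$; by part (2) $c_\ell\le c_r$, and it suffices to show that every vertex of $\chi(T)$ has index in $\{c_\ell,c_\ell+1,\dots,c_r\}$. The core consists of two bottleneck-matrix computations via Lemma~\ref{L-01}. At $c_\ell$: the component of $T-c_\ell$ toward vertex $1$ is a path on $c_\ell-1$ vertices, whereas the component toward the star contains, as a rooted sub-branch at $c_\ell$, a path on at least $c_\ell-1$ vertices (its spine part, lengthened by one pendant if need be; this uses $c_\ell\le\frac{n-g+2}{2}$). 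Since $g\ge 2$ this star-side component has strictly larger order, so its bottleneck matrix dominates that of the path component in the sense $\ll$, hence is the \emph{unique} Perron component at $c_\ell$. At $c_r$: the component toward vertex $1$ is a path on $c_r-1$ vertices, and the component $B$ toward the star is a path-star branch on $n-c_r$ vertices (or just $g$ isolated pendants when $c_r=n-g$); relabelling the vertices of $B$ shows $\hat L(B)^{-1}\ll\hat L(P_{n-c_r})^{-1}$, and the inequality $c_r-1\ge n-c_r$ — which the centroid characterization forces in every non-vacuous case, the case $c_r=n-g$ being immediate — yields, since Perron values of paths grow with length, $\rho(\hat L(B)^{-1})<\rho(\hat L(P_{n-c_r})^{-1})\le\rho(\hat L(P_{c_r-1})^{-1})$, so the path component is the unique Perron component at $c_r$. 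A short case check with Theorems~\ref{pt1}--\ref{pt3} then rules out $\chi(T)$ having a vertex of index below $c_\ell$ (else the unique Perron component at $c_\ell$, which lies on the star side of $c_\ell$, would have to contain it) and, symmetrically, a vertex of index above $c_r$. Hence $\chi(T)$ lies on the path joining $C(T)$ and $C_d(T)$.

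The step I expect to be the main obstacle is the pair of Perron-component comparisons in part (1): one must set up the bottleneck matrices correctly, pick the right sub-branch to dominate against, and verify the order/strictness conditions behind the relation $\ll$ — in particular the borderline configuration with $n$ even where the centroid occupies the edge $\{n/2,n/2+1\}$, so that one must pass to the correct endpoint before $c_r-1\ge n-c_r$ is available, and the degenerate case $c_r=n-g$. The index comparisons in part (2) are routine once the cases are laid out.
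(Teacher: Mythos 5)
The paper does not actually prove this proposition --- it is imported verbatim from \cite{P} (Theorem 3.3) and \cite{Dp} (Proposition 4.1) --- so your argument is necessarily an independent route, and in outline it is correct. Your part (1) is the more substantial contribution and it holds up: pendants are excluded from all four central sets, so everything reduces to spine indices; at $c_\ell=\min C(T)$ the star-side component contains, rooted at the neighbour of $c_\ell$, a path on $c_\ell-1$ vertices (using $c_\ell\le\frac{n-g+2}{2}$, with one pendant appended in the even case), and since $g\ge 2$ it has strictly larger order, so Lemma \ref{L-01} gives the $\ll$ comparison and the star side is the unique Perron component; at $c_r=\max C_d(T)$ the centroid characterization does give $c_r-1\ge n-c_r$ (with equality when $n$ is odd, which is harmless because the first comparison $\hat L(B)^{-1}\ll\hat L(P_{n-c_r})^{-1}$ is already strict, $B$ not being an end-rooted path), so the component towards vertex $1$ is the unique Perron component; Theorems \ref{pt1}--\ref{pt3} then pin $\chi(T)$ inside $[c_\ell,c_r]$ exactly as you say. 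This is the same bottleneck-matrix machinery the present paper uses in Theorem \ref{th-01}, just deployed to localize $\chi$ rather than to move it. Your reading of ``lies in the path connecting'' as the subpath spanned by $C\cup C_d$ is also the only tenable one (the tighter reading already fails for $P_{n-2,2}$ with $n$ even, where $C=\{n/2\}$, $C_d=\chi=\{n/2,n/2+1\}$). The one place you should not wave at: in part (2) the claimed reduction of $\max C_d\le\min S_c$ to an inequality ``$2^g\ge$ small polynomial in $g$, valid for all $g\ge 2$'' is literally false at $g=2$ (the crude form is $2^g\ge g+3$), and the conclusion there holds only with equality $\max C_d=\min S_c$ after the exact floor computation with Lemma \ref{SC-03} and the parity of $n$; you flagged $g=2$ and the dichotomy $2^g+1\lessgtr n-g$, but a finished write-up must carry out those few exact cases (they do all close, with equality in the tight ones), and similarly the case $\min C_d=n-g$ should be checked directly rather than through the generic inequality.
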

 
\begin{theorem}
 $ \lim_{n \rightarrow \infty} \frac{\delta_n(\chi,S_c)}{n}=\frac{1}{2}.$
\end{theorem}
 \begin{proof}
By Theorem \ref{th-0}, we have $\delta_n(\chi,S_c)=d_{P_{n-g_0,g_0}}(\chi,S_c)$. Also by Theorem \ref{CSc}, $\delta_n(C,S_c)=d_{P_{n-g_0,g_0}}(C,S_c)$ and by Theorem \ref{CdSc} $\delta_n(C_d,S_c)=d_{P_{n-g_0,g_0}}(C_d,S_c)$. Now from Proposition \ref{SC-05}, it follows that
$$ \delta_n(C_d,S_c)\leq \delta_n(\chi,S_c)\leq \delta_n(C,S_c)$$
 
$$ \Rightarrow \lim_{n \rightarrow \infty} \frac{\delta_n(C_d,S_c)}{n}\leq  \lim_{n \rightarrow \infty} \frac{\delta_n(\chi,S_c)}{n}\leq  \lim_{n \rightarrow \infty} \frac{\delta_n(C,S_c)}{n}.$$

 By Corollary \ref{cor1} and Corollary \ref{cor2}, $ \lim_{n \rightarrow \infty} \frac{\delta_n(\chi,S_c)}{n}=\frac{1}{2}.$
 
 \end{proof}
 
 \section{Trees with fixed diameter}
  
In this section, we will try to extremize the pairwise distance between  different central parts of trees on $n$ vertices with diameter $k$. If $k=1$ then $n=2$ and $K_2$ is the only such tree. If $k=2$ then $n\geq 3$ and star is the only such tree. So we can consider $3\leq k\leq n-1.$ For a tree $T$ and the edge $e=\{u,v\} \in E(T)$, let $T_e(u)$ denotes the component of $T - e$ containing $u$. The following result will be helpfull in this regard.
 
\begin{proposition} \label{SC-04}(\cite{Sswy},Proposition 1.7)
Let $T$ be a tree. A vertex $u \in S_c(T)$ if and only if for each neighbour $v$ of $u$, $f_{T_e(u)}(u)\geq f_{T_e(v)}(v)$ where $e=\{u,v\}$. Furthermore if $u \in S_c(T)$ and equality holds then $v \in S_c(T)$.
 \end{proposition}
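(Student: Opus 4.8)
The plan is to reduce the statement to a single multiplicative identity for $f_T$ across an edge, and then invoke the strict concavity of $f_T$ along paths recorded in Lemma~\ref{SC-0}.

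First I would fix an edge $e=\{u,v\}$ and sort the subtrees of $T$ containing $u$ according to whether or not they contain $v$. A subtree through $u$ that avoids $v$ must lie entirely inside the component $T_e(u)$, since in a tree any connected set meeting both sides of $e$ contains the path between $u$ and $v$, hence contains $v$; such subtrees therefore number $f_{T_e(u)}(u)$. A subtree through $u$ that also contains $v$ necessarily contains the edge $e$ (the only $u$--$v$ path in a tree is $e$ itself), and deleting $e$ splits it into a subtree of $T_e(u)$ through $u$ together with a subtree of $T_e(v)$ through $v$; this correspondence is a bijection, so such subtrees number $f_{T_e(u)}(u)\,f_{T_e(v)}(v)$. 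Hence
\[
f_T(u)=f_{T_e(u)}(u)\bigl(1+f_{T_e(v)}(v)\bigr),\qquad
f_T(v)=f_{T_e(v)}(v)\bigl(1+f_{T_e(u)}(u)\bigr),
\]
and on subtracting, the cross terms cancel:
\[
f_T(u)-f_T(v)=f_{T_e(u)}(u)-f_{T_e(v)}(v).
\]
In particular $f_T(u)\ge f_T(v)$ if and only if $f_{T_e(u)}(u)\ge f_{T_e(v)}(v)$, and equality holds in one exactly when it holds in the other.

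Next I would promote the condition ``$u$ dominates each of its neighbours under $f_T$'' to ``$u$ is a global maximiser of $f_T$'', i.e.\ $u\in S_c(T)$. Along any path $v_0v_1\cdots v_m$ of $T$, Lemma~\ref{SC-0} applied to each triple $v_{i-1},v_i,v_{i+1}$ gives $f_T(v_{i-1})-f_T(v_i)<f_T(v_i)-f_T(v_{i+1})$, so the successive differences $f_T(v_{i-1})-f_T(v_i)$ are \emph{strictly} increasing; consequently, once one such difference is $\ge 0$, every later one is $>0$. Now suppose $f_T(u)\ge f_T(v)$ for every neighbour $v$ of $u$, and take any $w\ne u$: along the $u$--$w$ path the first difference is $\ge 0$ by hypothesis, so $f_T(u)\ge f_T(w)$, whence $u\in S_c(T)$. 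The reverse implication is trivial, and combined with the edge identity above this yields precisely $u\in S_c(T)\iff f_{T_e(u)}(u)\ge f_{T_e(v)}(v)$ for every neighbour $v$ of $u$. For the final clause, if $u\in S_c(T)$ and $f_{T_e(u)}(u)=f_{T_e(v)}(v)$ for some neighbour $v$, then the edge identity gives $f_T(v)=f_T(u)=\max_{w\in V(T)}f_T(w)$, so $v\in S_c(T)$.

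The argument is mostly bookkeeping; the one step that needs care is the local-to-global passage, where it is essential that the concavity in Lemma~\ref{SC-0} is strict, so that a vertex beating all of its neighbours is forced to beat every vertex of $T$. No Perron--Frobenius or bottleneck-matrix input is needed for this particular statement.
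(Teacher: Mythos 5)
The paper does not prove Proposition~\ref{SC-04}; it is imported verbatim from \cite{Sswy} (Proposition 1.7). Your argument is correct and self-contained: the edge decomposition $f_T(u)=f_{T_e(u)}(u)\bigl(1+f_{T_e(v)}(v)\bigr)$ yields $f_T(u)-f_T(v)=f_{T_e(u)}(u)-f_{T_e(v)}(v)$, and the strict inequality in Lemma~\ref{SC-0} is applied correctly to pass from ``$u$ dominates all its neighbours'' to ``$u$ is a global maximizer of $f_T$.'' This is essentially the same counting identity underlying the cited proof, so nothing further is needed.
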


We denote the set of all trees on $n$ vertices with diameter $k$ by $\Gamma_n^k$.Take the path $P_{k+1}=[v_1,v_2,\ldots,v_{k+1}]$ on $k+1$ vertices.  Construct  a new tree from $P_{k+1}$ by adding $n-k-1$ pendant vertices at the vertex $v_{\lfloor\frac{k+2}{2}\rfloor}$. We dnote the new tree by $T_{n,k}$. Clearly $T_{n,k}\in \Gamma_n^k$.  
\begin{lemma}
$d_{T_{n,k}}(C,C_d)=d_{T_{n,k}}(C,S_c)=d_{T_{n,k}}(C_d,S_c)=d_{T_{n,k}}(C,\chi)=d_{T_{n,k}}(C_d,\chi)=d_{T_{n,k}}(\chi,S_c)=0.$
\end{lemma}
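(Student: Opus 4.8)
The plan is to exploit the symmetry of $T_{n,k}$ about the central vertex $v_{\lceil (k+1)/2\rceil}$ of the diametral path and show all four classical central parts, plus the characteristic set, collapse to the same one or two vertices. First I would pin down the center: every vertex of $T_{n,k}$ lies on a path of length at most $k$ realizing the diameter, and the eccentricity is minimized exactly at the middle of the path $[v_1,\dots,v_{k+1}]$, i.e. at $v_{\lceil(k+1)/2\rceil}$ (one vertex if $k$ even, two adjacent if $k$ odd) — the attached pendants at $v_{\lfloor(k+2)/2\rfloor}$ do not change eccentricities since that vertex is at (or adjacent to) the center and the pendants are at distance one from it. So $C(T_{n,k})$ is the middle of the path.

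Next I would handle the centroid/median via the branch-weight (or distance-sum) criterion. The vertex $v_{\lfloor(k+2)/2\rfloor}$ carrying the extra pendants has, at it, the two path-branches of roughly $k/2$ edges each together with $n-k-1$ pendant branches of one edge; moving to an adjacent vertex on the longer side only increases the maximal branch, so the centroid sits at $v_{\lfloor(k+2)/2\rfloor}$ (or it and its neighbor toward the shorter side, when the two path-branches have equal size, i.e. $k$ odd). A short check reconciles $\lfloor(k+2)/2\rfloor$ with $\lceil(k+1)/2\rceil$: these indices coincide for $k$ even and differ by one for $k$ odd, in which case both describe the same pair of adjacent vertices, so $C(T_{n,k})=C_d(T_{n,k})$, giving $d(C,C_d)=0$.

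For the subtree core I would apply Proposition \ref{SC-04}: writing $u=v_{\lfloor(k+2)/2\rfloor}$, for each neighbor $v$ of $u$ the component $T_e(u)$ contains the "heavy" side — the bushy pendant cluster together with at least half the path — so $f_{T_e(u)}(u)\ge f_{T_e(v)}(v)$, hence $u\in S_c(T_{n,k})$, and equality can only occur for the path-neighbor on the shorter side when $k$ is odd, again forcing $S_c$ into the same vertex-or-adjacent-pair. For the characteristic set I would use Theorem \ref{pt1}, Theorem \ref{pt2} and Lemma \ref{L-01}: by symmetry the two path-branches at the center are isomorphic, and comparing bottleneck matrices (Lemma \ref{L-01}) the branch containing the pendant cluster dominates any branch not containing it, so the Perron components at $v_{\lfloor(k+2)/2\rfloor}$ are exactly the two longest path-branches (equal by symmetry when $k$ odd) — hence $\chi(T_{n,k})$ is that same central vertex (or edge). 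Since all five central parts equal the middle of $[v_1,\dots,v_{k+1}]$ (a single vertex for $k$ even, the same adjacent pair for $k$ odd), every pairwise distance is $0$.

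The main obstacle is the characteristic-set claim: I would need to argue carefully, via Lemma \ref{L-01} and the $\ll$-domination machinery from Section 3, that attaching the extra pendants at the (near-)central vertex keeps the two longest path-branches as the unique pair of Perron components and does not drag the characteristic set off center — in particular verifying, when $k$ is odd, that the path-branch and the branch carrying the pendants at the two candidate central vertices balance so that the characteristic set is the central edge rather than being pushed into one side. The other four parts are routine counting/eccentricity arguments; the spectral part is where the real care lies.
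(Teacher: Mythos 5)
Your overall plan is the same as the paper's (eccentricity/branch-weight for $C$ and $C_d$, Proposition \ref{SC-04} for $S_c$, Perron components via Theorems \ref{pt1}--\ref{pt2} and Lemma \ref{L-01} for $\chi$), but the step you yourself flag as the ``main obstacle'' --- the characteristic set when $k$ is odd --- is genuinely missing, and the sketch you give for it rests on a parity mix-up. You assert that ``by symmetry the two path-branches at the center are isomorphic'' and that the Perron components at $v_{\lfloor(k+2)/2\rfloor}$ are the two path-branches, ``equal by symmetry when $k$ odd.'' That symmetry holds only for $k$ even (two path-branches of $k/2$ edges each, giving two Perron components and hence, by Theorem \ref{pt2}, a characteristic vertex at $v_{(k+2)/2}$ --- exactly the paper's Case~I). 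For $k$ odd the pendants sit at $v_{(k+1)/2}$ and the two path-branches there have $(k-1)/2$ and $(k+1)/2$ edges, so they are \emph{not} isomorphic: there is a unique Perron component at $v_{(k+1)/2}$ (the longer path side), Theorem \ref{pt2} does not apply, and your argument as written does not determine $\chi$. The paper closes this case with Theorem \ref{pt1}: at $v_{(k+1)/2}$ the unique Perron component is the one containing $v_{(k+3)/2}$, while at $v_{(k+3)/2}$ the unique Perron component is the one containing $v_{(k+1)/2}$ together with the pendants (its bottleneck matrix contains, by Lemma \ref{L-01}, that of a path on $(k+1)/2$ vertices, which dominates the bottleneck matrix of the opposite path on $(k-1)/2$ vertices), so $\chi(T_{n,k})=\{v_{(k+1)/2},v_{(k+3)/2}\}$ when $n>k+1$. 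This is the verification your proposal defers; without it the lemma is not proved.

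A smaller but related slip: for $k$ odd and $n>k+1$ you conclude $C(T_{n,k})=C_d(T_{n,k})$ because ``the two path-branches have equal size,'' which is false; in fact $C(T_{n,k})=\{v_{(k+1)/2},v_{(k+3)/2}\}$ while $C_d(T_{n,k})=\{v_{(k+1)/2}\}$ (the branch of $v_{(k+3)/2}$ containing the pendants has $n-(k+1)/2$ edges, strictly larger). The central parts intersect at $v_{(k+1)/2}$ rather than coincide, which is all the lemma needs, but your stated reason does not hold; the same caution applies to your parenthetical about where equality can occur in the Proposition \ref{SC-04} check. The even case and the $n=k+1$ (path) case in your write-up are fine and match the paper.
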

\begin{proof}
We consider two cases depending on $k$ is even or odd.\\
\noindent\textbf{Case I:} $k$ is even\\
It is easy to check that $C(T_{n,k})=C_d(T_{n,k})=\{v_{\frac{k+2}{2}}\}$. Also at $v_{\frac{k+2}{2}}$ in $T_{n,k}$ there are  two Perron components (since $k>2$), so by Theorem \ref{pt2} $\chi(T_{n,k})=\{v_{\frac{k+2}{2}}\}.$   

The subtree core $S_c(T_{n,k})$ does not contain any pendent vertex (see \cite{Dp}, Remark 1.5). Consider the edge $e=\{v_{\frac{k}{2}},v_{\frac{k+2}{2}}\}$. Let $C_1$ and $C_2$ be the components of $T - e$ containing  $v_{\frac{k}{2}}$ and $v_{\frac{k+2}{2}}$, respectively. Then $f_{C_2}(v_{\frac{k+2}{2}})> f_{C_1}(v_{\frac{k}{2}}).$ By symmetry and Proposition \ref{SC-04}, we have $S_c(T_{n,k})=\{v_{\frac{k+2}{2}}\}$ and hence the result follows.

\noindent\textbf{Case II:} $k$ is odd\\
If $n=k+1$, then $T_{n,k}$ is a path and $C(T_{n,k})=C_d(T_{n,k})=\chi(T_{n,k})=S_c(T_{n,k})=\{v_{\frac{k+1}{2}},v_{\frac{k+3}{2}}\}.$ 

If $n>k+1$ then $C(T_{n,k})=\{v_{\frac{k+1}{2}},v_{\frac{k+3}{2}}\}$ and $C_d(T_{n,k})=\{v_{\frac{k+1}{2}}\}$. At $v_{\frac{k+1}{2}}$, the component containing $v_{\frac{k+3}{2}}$ is the only Perron component and at $v_{\frac{k+3}{2}}$ the component containing $v_{\frac{k+1}{2}}$ is the only Perron component, so by Theorem \ref{pt1}, $\chi(T_{n,k})=\{v_{\frac{k+1}{2}},v_{\frac{k+3}{2}}\}.$ Also using similar technique as in Case  I, it can be checked that $S_c(T_{n,k})=\{v_{\frac{k+1}{2}}\}.$ Hence the result follows.

\end{proof}

\begin{corollary}
Among all trees on $n$ vertices and diameter $k$, the minimum distance between any two central parts is $0.$
\end{corollary}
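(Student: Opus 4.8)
The corollary is an immediate consequence of the preceding lemma, so the plan is essentially to observe that the lemma exhibits a single family of witnesses covering every admissible diameter.

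First I would note that the parameter range we must cover is $3 \le k \le n-1$, which was fixed at the start of the section; the degenerate cases $k=1$ (forcing $T=K_2$) and $k=2$ (forcing $T$ to be a star) were already dismissed because in those cases all four central parts coincide trivially. For each such pair $(n,k)$ the tree $T_{n,k}$ constructed just before the lemma lies in $\Gamma_n^k$, as was remarked there. The lemma then asserts that in $T_{n,k}$ all six pairwise distances among the center, centroid, subtree core and characteristic set are zero. Since a distance between two subsets of vertices is always a nonnegative integer, zero is the smallest value it can take, and $T_{n,k}$ attains it for every one of the six pairs simultaneously.

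The only real content is therefore to invoke the lemma; the corollary follows because $\Gamma_n^k$ is nonempty for all $3 \le k \le n-1$ (again witnessed by $T_{n,k}$) and $d_{T_{n,k}}$ of any pair of central parts equals $0$, which is a lower bound for the minimum over all trees in $\Gamma_n^k$. I would phrase it as: for any two of the four central parts, $\min\{d_T(\cdot,\cdot): T \in \Gamma_n^k\} \le d_{T_{n,k}}(\cdot,\cdot) = 0$ by the lemma, and the minimum is $\ge 0$ trivially, hence it equals $0$.

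There is no genuine obstacle here — the work was done in the lemma, whose proof split into the even and odd diameter cases and used Proposition \ref{SC-04} together with Theorem \ref{pt1} and Theorem \ref{pt2} to locate each central part at $v_{\lfloor (k+2)/2 \rfloor}$ (or at the central edge in the pure-path subcase). If anything needs care it is only the bookkeeping that $T_{n,k}$ really does realize diameter exactly $k$ and has exactly $n$ vertices, but that is built into the construction. So the proof is a one-line citation of the lemma.

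\begin{proof}
Let $3 \le k \le n-1$ and let $T_{n,k} \in \Gamma_n^k$ be the tree constructed above. By the previous lemma, the distance between any two of the center, centroid, subtree core and characteristic set of $T_{n,k}$ equals $0$. Since the distance between two subsets of $V(T)$ is a nonnegative integer for every tree $T$, the minimum of this distance over all $T \in \Gamma_n^k$ is at least $0$, and it is at most $d_{T_{n,k}}(\cdot,\cdot) = 0$. Hence it equals $0$.
\end{proof}
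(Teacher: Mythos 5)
Your proposal is correct and is exactly the argument the paper intends: the corollary is stated as an immediate consequence of the lemma on $T_{n,k}$, using nonnegativity of distances and the fact that $T_{n,k}\in\Gamma_n^k$ realizes all six pairwise distances equal to $0$. Nothing further is needed.
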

  
We will now try to maximize the distance between any two central parts over $\Gamma_n^k$.  The same is studied between the central parts center and centroid and the central parts center and subtree core (see \cite{Sswy}, Proposition 4.1 and Proposition 4.2). Next we will discuss about the central parts center and characteristic set.
\begin{theorem}
 The path-star tree $P_{k,n-k}$ maximizes the distance between the center and the characteristic set over $\Gamma_n^k$.
 \end{theorem}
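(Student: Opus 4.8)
The plan is to adapt the perturbation argument of Theorem~\ref{th-01}, with the additional requirement that every move keeps the tree in $\Gamma_n^k$; the payoff of the diameter constraint is that it pins the center. Fix a diametral path $P=[v_1,\dots,v_{k+1}]$ of $T$. Since $T$ has radius $\lceil k/2\rceil$ and $P$ realises the diameter, $v_1$ and $v_{k+1}$ are pendant vertices (any other neighbour of $v_1$ would give a path of length $k+1$), the center is $\{v_{(k+2)/2}\}$ when $k$ is even and $\{v_{(k+1)/2},v_{(k+3)/2}\}$ when $k$ is odd, and every move below will keep the diameter equal to $k$ and leave the initial segment from $v_1$ to the center untouched, so the center never moves. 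We may assume $d_T(C,\chi)\ge 1$, since the bound is trivial otherwise; let $c$ be the center vertex, or the end of the center edge when $k$ is odd, lying on the side of $\chi$, and after possibly reversing $P$ assume $\chi$ lies in the component of $T-c$ toward $v_{k+1}$; by Theorems~\ref{pt1}--\ref{pt3} this is the unique Perron component at $c$.

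Step~1 is to reduce $T$ to a tree of the form: the path $[v_1,\dots,v_{k+1}]$ with the remaining $n-k-1$ vertices attached as pendant vertices at a single vertex $v_a$, $2\le a\le k$; call this tree $W_a$, and note $W_k=P_{k,n-k}$. Letting $w$ be the vertex of $\chi(T)$, or the endpoint of the characteristic edge, nearest $C$, so that $d(w,C)=d_T(C,\chi)$, I would carry out three kinds of moves, each keeping the diameter equal to $k$ and, by Lemma~\ref{L-01}, replacing the bottleneck matrix of the component toward $v_{k+1}$ by one that dominates it in the $\ll$-order while never increasing the Perron value of the opposite component: strip the component toward $v_1$ down to the bare path forced by the diameter; straighten the part of the tree lying beyond $w$ on the $v_{k+1}$ side into a path-star branch; and evacuate every remaining branch out to $v_k$, flattening it to pendant vertices whenever its height would otherwise produce a path longer than $k$. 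By Theorem~\ref{pt3} each such move keeps $\chi$ at $w$ or moves it further from $c$, so $d_{W_a}(C,\chi)\ge d_T(C,\chi)$.

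Step~2 is to show $d_{W_a}(C,\chi)\le d_{P_{k,n-k}}(C,\chi)$ for $2\le a\le k$. Using the isomorphism $W_a\cong W_{k+2-a}$ coming from reversing $P$, we may assume $v_a$ lies on the half of $P$ containing $v_{k+1}$. A Perron-value comparison, for which the needed values of path and path-star branches are exactly those computed in \cite{Afjk}, Lemma~2.1, shows that at every vertex strictly between $C$ and $v_a$ the Perron component is the one toward $v_a$, so $\chi(W_a)$ lies strictly between $C$ and $v_a$; and passing from $W_a$ to $W_{a+1}$ (which is still in $\Gamma_n^k$ when $a+1\le k$) once more dominates the relevant bottleneck matrix and so pushes $\chi$ no closer to $C$. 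Iterating up to $a=k$ yields the claim, and together with Step~1 this shows $P_{k,n-k}$ maximises $d(C,\chi)$ over $\Gamma_n^k$.

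The crux is Step~1, and specifically the conflict between Perron-value monotonicity and the diameter bound: unlike in Theorem~\ref{th-01}, a branch cannot simply be relocated to wherever it does the most good, because a relocation that raises the Perron value of the $v_{k+1}$-side may create a path of length $k+1$, whereas the diameter-safe flattening of a tall branch can lower that Perron value. One must check that for each branch there is a single move that is at once diameter-preserving and $\ll$-increasing; this is where the proof splits into cases according to whether $\chi$ is a vertex or an edge and, independently, whether $k$ is even or odd (so that the correct end $c$ of the center is identified before Theorems~\ref{pt1} and~\ref{pt2} are applied). The only other delicate point, handled by the same Perron-value estimates, is verifying in Step~2 that $\chi(W_{a+1})$ does not overshoot $v_{a+1}$.
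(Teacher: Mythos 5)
Your overall framework is the same as the paper's (diameter-preserving perturbations, bottleneck-matrix domination via Lemma \ref{L-01}, and the Perron-component characterizations of Theorems \ref{pt1}--\ref{pt3}, with the center pinned at the middle of a diametral path), and your Step~2 monotonicity argument (sliding the star from $v_a$ to $v_{a+1}$ increases the far-side bottleneck matrix in the $\ll$-order while leaving the near side and the center unchanged) is sound and would be a valid supplement. The problem is Step~1, which is where all the real work lies, and it is not actually proved. You yourself identify the obstruction -- a tall branch on the $\chi$-side cannot be relocated freely without violating the diameter bound, while the diameter-safe move you propose (``flattening it to pendant vertices'') \emph{decreases} the corresponding bottleneck matrix in the $\ll$-order and hence may pull $\chi$ back toward the center -- and then you defer the resolution with ``one must check that for each branch there is a single move that is at once diameter-preserving and $\ll$-increasing.'' No such move is exhibited, and branch-by-branch flattening, as written, goes in the wrong direction. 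This is precisely the point the paper's proof is built around: it does not process branches one at a time, but replaces the \emph{entire} far side at the relevant characteristic vertex in one stroke by a path-star branch $P_{g,s-g}$ with $g=\max_i\operatorname{diam}(C_i)$ (Case I), and, when $\chi(T)$ lies on no longest path, by a path-star $P_{k-\alpha-1,\,s-(k-\alpha-1)}$ calibrated by the distance $\alpha$ to the far end of that path (Case II); in both cases the replacement simultaneously keeps the diameter equal to $k$ and dominates the old bottleneck matrix by Lemma \ref{L-01}, so Theorem \ref{pt3} pushes $\chi$ away from the center. Without an analogue of this wholesale replacement, your Step~1 is an assertion, not an argument.

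A secondary inconsistency: the moves you describe do not produce the target shape $W_a$. ``Straightening the part beyond $w$ into a path-star branch'' places a star at the end of that branch, while ``evacuating remaining branches out to $v_k$'' places pendants at $v_k$; unless these coincide, the result has pendant clusters at two different vertices, so it is not of the form ``path $[v_1,\dots,v_{k+1}]$ with all $n-k-1$ extra vertices pendant at a single $v_a$,'' and Step~2 does not apply to it. Note also that your Step~2 is, in effect, a detour the paper avoids: its two-stage reduction (replace the far side by a path-star branch, then prune the near side to its longest path through the center, dumping the pruned vertices into the star) lands directly on $P_{k,n-k}$, so no star-sliding comparison between intermediate trees is needed. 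To repair your proposal you would need either to prove the existence of the per-branch diameter-safe $\ll$-increasing move you postulate, or to adopt the paper's global replacement of the far-side forest by a single path-star branch with path length $\max_i\operatorname{diam}(C_i)$ (respectively $k-\alpha-1$ in the off-path case).
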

\begin{proof}
Let $T\in \Gamma_n^k$.  We will prove that $d_{P_{k,n-k}}(C,\chi)\geq d_T(C,\chi)$. Without loss of generality we can take $d_T(C,\chi)\geq 1.$ The center of $T$ lies in all the longest paths of $T$. We consider two cases depending on the position of characteristic set of $T$.\\
\noindent\textbf{Case I:} Characteristic set of $T$ lies in a longest path\\
Let $P$ be  a  longest path of $T$  containing both $C(T)$ and $\chi(T)$. Then the diameter of the path $P$ is $k.$ Let $v$ be the vertex in the characteristic set which is farthest from $C(T)$. Let $C_1,C_2,\ldots,C_l$ be the components of $T - v$ where $C_1$ is the component containing the center of $T.$ Let $C\equiv \cup_{j=2}^lC_j$ and let the number of vertices in $C$ be $s.$ Construct a new tree $\tilde{T}$ from $T$ by replacing $C$  with a path-star tree $P_{g,s-g}$ at $v$, where $g=\max\{diam(C_2),\cdots,diam(C_l)\}$. Then $\tilde{T}\in \Gamma_n^k.$ Suppose $\tilde{M}$ is the bottleneck matrix of $P_{g,s-g}$ at $v$ in $\tilde{T}$. Then by Lemma \ref{L-01} $\tilde{M}\gg \hat{L}(C)^{-1}$ and the characteristic set of $\tilde{T}$ is either same as characteristic set of $T$ or it  moves away from its center towards the path-star part. So, $d_{\tilde{T}}(C,\chi)\geq d_T(C,\chi).$

If $\tilde{T}$ is a path-star tree then the result follows. Otherwise at $v$, one of the components in $\tilde{T}$ is a path-star tree. In the other component at $v$, choose the longest path $P_1$ which contains the center of $\tilde{T}$. Delete the vertices which are not on $P_1$, and add the same number of vertices (as pendants) to the star part (of the other component) to get a new tree $\hat{T}.$  Clearly $\hat{T}$ is the path-star tree $P_{k,n-k}$. Then $C(\tilde{T})= C(\hat{T})$ and the characteristic set of $\hat{T}$ is either same as characteristic set of $\tilde{T}$ or it  moves away from its center towards the path-star part. So, $d_{\hat{T}}(C,\chi)\geq d_{\tilde{T}}(C,\chi)\geq d_T(C,\chi).$ Hence the result follows.

\noindent\textbf{Case II:} Characteristic set of $T$ does not lie in any of the  longest path\\
Let $P$ be  the longest path of $T$  containing both $C(T)$ and $\chi(T)$. Then the diameter of the path $P$ is less than $k.$ Let $v$ be the vertex in the characteristic set which is farthest from  $C(T)$ and let $u$ be the pendant vertex of $P$ farthest from $v.$ Let $d_T(u,v)=\alpha.$ Let $C_1,C_2,\ldots,C_l$ be the components of $T - v$ where $C_1$ is the component containing  $C(T).$  Since $diam(P)<k$ and $d(u,v)=\alpha$, so $\max\{diam(C_2),\cdots,diam(C_l)\}\leq k-\alpha-2$. Let $C\equiv \cup_{j=2}^lC_j$ and let the number of vertices in $C$ be $s.$ As $\alpha>\frac{k}{2}$ and  $v$ is in the characteristic set, so $s> k-\alpha.$ Construct a new tree $\tilde{T}$ from $T$ by replacing $C$  with a path-star tree $P_{k-\alpha-1,s-(k-\alpha-1)}$ at $v$. Then $\tilde{T}\in \Gamma_n^k.$ Suppose $\tilde{M}$ is the bottleneck matrix of $P_{k-\alpha-1,s-(k-\alpha-1)}$ at $v$ in $\tilde{T}$. Then by Lemma \ref{L-01} $\tilde{M}\gg \hat{L}(C)^{-1}$ and the characteristic set of $\tilde{T}$ is either same as characteristic set of $T$ or it  moves away from its center towards the path-star part. So, $d_{\tilde{T}}(C,\chi)\geq d_T(C,\chi).$ 

Now the center and characteristic set of $\tilde{T}$ lies in a longest path of it and the result follows from Case I.
\end{proof}

 For positive integers $l,m,k$ with $n=l+m+k$, let $T(l,m,k)$ be the tree of order $n$ obtained by taking the path  $P_k:v_1v_2\cdots v_k$  and adding $l$ pendant vertices adjacent to $v_1$ and $m$ pendant vertices adjacent to $v_k$. Note that $T(l,m,k)\in \Gamma_n^{k+1}.$
 
\begin{theorem}
 Let $k\leq \lceil\frac{n}{2}\rceil$. Then over $\Gamma_n^k$, the distance between the centroid and the characteristic set is maximized by the tree  $T(n-\lfloor\frac{n}{2}\rfloor-k+1,\lfloor\frac{n}{2}\rfloor,k-1)$.
 \end{theorem}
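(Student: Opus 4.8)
The plan is to mimic the structure of the proof of Theorem~\ref{th-01}: start with an arbitrary $T\in\Gamma_n^k$ achieving a positive distance $d_T(C_d,\chi)$, and transform it step by step into the candidate tree $T(n-\lfloor\frac n2\rfloor-k+1,\lfloor\frac n2\rfloor,k-1)$ without ever decreasing $d_T(C_d,\chi)$, while staying inside $\Gamma_n^k$. First I would record the centroid side: by Proposition~\ref{prop:cnt} the centroid is one vertex or two adjacent vertices; let $w$ be the centroid vertex nearest to $\chi(T)$, and let $v$ be the vertex of $\chi(T)$ nearest to $w$. The characteristic set is governed by Perron components (Theorems~\ref{pt1}, \ref{pt2}, \ref{pt3}) and the bottleneck/edge-counting description (Lemma~\ref{L-01}): replacing any subtree hanging off $v$ on the side \emph{away} from $w$ by a longer/heavier configuration (a path, or a path-star) can only push $\chi$ further from $w$ or leave it in place, by the $\ll$ argument used throughout Section~3. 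The centroid, on the other hand, is controlled by Proposition~\ref{SC-04}-type weight comparisons together with Proposition~\ref{prop:med1}; here the key point is that the candidate tree $T(l,m,k-1)$ with $k-1$ short and two big ``brooms'' of sizes $l=n-\lfloor\frac n2\rfloor-k+1$ and $m=\lfloor\frac n2\rfloor$ puts the centroid at $v_1$, the broom vertex on the heavier side, so the centroid sits essentially at one end of the backbone path, maximizing its distance to a $\chi$ that has been driven toward the other end.

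The concrete sequence of perturbations I would carry out is: (1) On the side of $v$ away from $w$, collapse everything hanging there into a single pendant-heavy structure at the far end so as to make that component the strict unique Perron component at $v$; by Theorem~\ref{pt1}/Theorem~\ref{pt3} this moves $\chi$ away from $w$ (or fixes it). (2) On the centroid side, push vertices so that the $w$-side becomes a broom (a path of the minimum length forced by the diameter constraint, with all slack vertices as pendants at its far end), which by the weight criterion makes $w$ the (unique) centroid and, combined with $k\le\lceil\frac n2\rceil$, forces the centroid to the very end-broom vertex. (3) Rebalance the two brooms so that the heavier one has exactly $\lfloor\frac n2\rfloor$ pendants: this is where the hypothesis $k\le\lceil\frac n2\rceil$ is used, since it guarantees there are enough ``free'' vertices to make a broom of size $\lfloor\frac n2\rfloor$ at one end while the diameter-$k$ backbone $v_1\cdots v_{k-1}$ plus one pendant on each side still has length $k$. (4) Check that the resulting tree is exactly $T(n-\lfloor\frac n2\rfloor-k+1,\lfloor\frac n2\rfloor,k-1)$ and that at each step neither the centroid moved toward $\chi$ nor $\chi$ moved toward the centroid, so $d(C_d,\chi)$ never decreased. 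Finally I would verify directly, using Lemma~\ref{L-01} and Proposition~\ref{SC-04}, that in this extremal tree the centroid is $\{v_1\}$ (the big-broom end) and $\chi$ is the edge (or vertex) near the small-broom end, and compute the resulting distance.

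The main obstacle I expect is step~(2)--(3): controlling the centroid under the diameter constraint. Unlike the free case of Theorem~\ref{th-01}, here we cannot make the non-$\chi$ side an arbitrary star --- the diameter must stay exactly $k$, so the ``broom'' on the centroid side must retain a path of a prescribed length, and one must show that among all diameter-$k$ trees with a prescribed vertex budget on that side, the broom both (a) keeps the centroid at the end-vertex and (b) does not pull $\chi$ back. Proving (b) requires a careful Perron-value comparison between a broom and competing shapes at the relevant cut vertex, which is the analogue of Lemma~\ref{SC-02} but for the characteristic set; the $\ll$ relation from Lemma~\ref{L-01} should still do the job, but one has to be attentive that lengthening the backbone to keep diameter $k$ does not accidentally create a \emph{second} Perron component and split $\chi$. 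The hypothesis $k\le\lceil\frac n2\rceil$ is exactly what prevents this: it leaves enough mass for the heavy broom to dominate. Once these Perron-value inequalities are in hand, the rest is the bookkeeping of identifying the final tree and reading off the distance.
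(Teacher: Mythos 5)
Your overall strategy is the same as the paper's: a chain of local perturbations that pin the centroid while the characteristic set is pushed away, controlled by the bottleneck-matrix comparison of Lemma \ref{L-01} together with Theorems \ref{pt1}--\ref{pt3}, ending at the double-broom tree $T(n-\lfloor\frac n2\rfloor-k+1,\lfloor\frac n2\rfloor,k-1)$. But what you flag as the ``main obstacle'' --- controlling both centres while keeping the diameter equal to $k$ at every step --- is exactly the part you leave unproved, and it is also where the paper's argument differs from your plan in a way that dissolves the obstacle. The paper does \emph{not} insist that intermediate trees have diameter exactly $k$: the first move replaces everything hanging at the centroid vertex $v$ away from $\chi$ by pendant vertices attached directly at $v$ (not by a broom at the end of a path of prescribed length). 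This simultaneously fixes $C_d=\{v\}$ and, by Lemma \ref{L-01}, makes the bottleneck matrix of the $v$-side component at the nearest characteristic vertex strictly smaller, so by Theorem \ref{pt3} the characteristic set cannot move toward $v$; the diameter may drop below $k$, and that is allowed. The $\chi$-side is then replaced by a path-star, pendants are transferred from $v$ to the far broom vertex leaving exactly $\lfloor\frac n2\rfloor$ at $v$, and only in the \emph{last} step is the diameter restored to $k$, by replacing the component beyond the characteristic set with a path-star whose path length is chosen as $k-\beta-2$ (where $\beta$ is the current distance from $v$ to $\chi$). So the delicate ``broom versus competing shapes under an exact diameter constraint'' comparison you anticipate never has to be made; without this reorganization your steps (2)--(3) remain a genuine gap, since you give no argument that your diameter-preserving centroid-side broom does not pull $\chi$ back.

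Two smaller corrections. In the extremal tree the centroid is the broom vertex $v_{k-1}$ carrying the $\lfloor\frac n2\rfloor$ pendants, not $v_1$, which carries only $n-\lfloor\frac n2\rfloor-k+1$ pendants (your phrase ``$v_1$, the broom vertex on the heavier side'' is internally inconsistent). And the hypothesis $k\le\lceil\frac n2\rceil$ is not about preventing a second Perron component (a characteristic \emph{vertex} would do no harm to the argument); it is simply what makes $n-\lfloor\frac n2\rfloor-k+1\ge 1$, i.e.\ it guarantees that after reserving $\lfloor\frac n2\rfloor$ pendants at the centroid and a backbone realizing diameter $k$, at least one vertex remains for the other broom, so the target tree exists in $\Gamma_n^k$.
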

\begin{proof}
Let $T\in \Gamma_n^k$. Without loss of generality we can take $d_T(C_d,\chi)\geq 1.$ Let  $u \in \chi(T)$ and $v \in C_d(T)$ such that $d_T(C_d,\chi)=d_T(u,v)$. Let $C_1,C_2,\ldots,C_l$ be the components of $T\setminus v$ where $C_1$ is the component containing $\chi(T).$ Let $C\equiv \cup_{j=2}^lC_j$ and let the number of vertices in $C$ be $s.$ Construct $\tilde{T}$ from $T$ by removing $C$ and adding $s$ pendant vertices at $v.$ Observe that  $C_d(\tilde{T})=\{v\}$  and $diam(\tilde{T})\leq k.$  Let $M$ be the bottleneck matrix of the component of $T - u$ containing $v$ and let $\tilde{M}$ be the bottleneck matrix of the component of $\tilde{T}\setminus u$ containing $v$. Then by Lemma \ref{L-01} $M\gg \tilde{M}$ and by Theorem \ref{pt3}  the characteristic set of $\tilde{T}$ is either same as characteristic set of $T$ or it  moves away from $v.$ So, $d_{\tilde{T}}(C_d,\chi)\geq d_T(C_d,\chi).$

Let $w\in \chi(\tilde{T})$ such that $w$ is nearest to $v$. Let $D_1,D_2,\ldots,D_p$ be the components of $\tilde{T} - w$ where $D_1$ is the component containing the vertex $v.$ Let $D\equiv \cup_{j=2}^pC_j$ and let the number of vertices in $D$ be $q.$ Construct a new tree $\hat{T}$ from $\tilde{T}$ by replacing $D$ at $w$  with a path-star tree $P_{g,q-g}$ at $w$, where $g=\max\{diam(C_2),\cdots,diam(C_l)\}$. Then   the characteristic set of $\hat{T}$ is either same as characteristic set of $\tilde{T}$ or it  moves away from $v.$ So, $d_{\hat{T}}(C_d,\chi)\geq d_{\tilde{T}}(C_d,\chi).$ Also $diam(\hat{T})=diam(\tilde{T})\leq k.$

Let $w_1$ be the center of the star part of $P_{g,q-g}$. In $\hat{T}$, let $v'$ and $w_1'$ be the non pendant vertices adjacent to $v$ and $w_1$, respectively. Consider the maximal subtree of $\hat{T}$ not containing both $v$ and $w.$ Delete all the vertices of the subtree which are not in the $w_1'-v'$ path and add them as a pendant vertex at $w_1$ to form a new tree $\hat{T_1}$ from $\hat{T}.$ Clearly $d_{\hat{T_1}}(C_d,\chi)\geq d_{\tilde{T}}(C_d,\chi)$ and $diam(\hat{T_1})=diam(\hat{T})\leq k.$

Since  $C_d(\hat{T_1})=\{v\}$, so at least $\lfloor\frac{n}{2}\rfloor$ pendant verices are adjacent to $v.$ If the number of pendant vertices adjacent to $v$ is $\alpha$ then remove $\alpha - \lfloor\frac{n}{2}\rfloor$ pendant vertices from $v$ and add them as  pendant vertices at $w_1$ to form a new tree $\hat{T_2}$ from $\hat{T_1}$. Then $C_d(\hat{T_2})=C_d(\hat{T_1})=\{v\}$ and either $\chi(\hat{T_2})=\chi(\hat{T_1})$ or $\chi(\hat{T_2})$ moves towards $w_1.$ So $d_{\hat{T_2}}(C_d,\chi)\geq d_{\hat{T_1}}(C_d,\chi)$ and $diam(\hat{T_2})=diam(\hat{T_1})\leq k.$

If $diam(\hat{T_2})=k$, then we are done. Otherwise let $u'\in \chi(\hat{T_2})$ such that $u'$ is closer to $v$ and $d_{\hat{T_2}}(u',v)=\beta.$ Let $E_1$ be the component of $\hat{T_2}$ at $u'$ containing $w_1$. Then $diam(E_1)<k-\beta -2=\gamma_1$(say) and order of $E_1$ is $n-(\lfloor \frac{n}{2} \rfloor+\beta +1)=\gamma_2$(say). Construct a new tree $\hat{T_3}$ from $\hat{T_2}$ by replacing $E_1$ at $u'$ with a path-star tree $P_{\gamma_1,\gamma_2-\gamma_1}$. The new tree $\hat{T_3}$ is the tree $T(n-\lfloor\frac{n}{2}\rfloor-k+1,\lfloor\frac{n}{2}\rfloor,k-1)$ and $d_{\hat{T_3}}(C_d,\chi)\geq d_{\hat{T_2}}(C_d,\chi)$. Hence the result follows.
\end{proof}
It will be nice to find a tree which maximizes the distance between centroid and characteristic set over $\Gamma_n^k$ for $k> \lceil\frac{n}{2}\rceil.$ Also it seems difficult to find the trees which maximize the distance between centroid and subtree core and the distance between characteristic set and subtree core over $\Gamma_n^k$.

\noindent{\bf Addresses}:\\

\noindent 1) School of Mathematical Sciences,\\
National Institute of Science Education and Research (NISER), Bhubaneswar,\\
P.O.- Jatni, District- Khurda, Odisha - 752050, India\medskip

\noindent 2) Homi Bhabha National Institute (HBNI),\\
Training School Complex, Anushakti Nagar,\\
Mumbai - 400094, India\medskip

\noindent E-mails: dinesh.pandey@niser.ac.in, klpatra@niser.ac.in

\end{document}